\documentclass[11pt,reqno]{amsart}


\usepackage[T1]{fontenc}

\usepackage{amsmath,xypic}								
\usepackage{amssymb}
\usepackage{amsthm}
\usepackage{amscd}
\usepackage{amsfonts}
\usepackage{mathabx}
\usepackage{stmaryrd}
\usepackage[all]{xy}

\usepackage{caption}
\usepackage{subcaption}
\usepackage{euler}

\usepackage{extarrows}

\usepackage[colorlinks, linktocpage, citecolor = purple, linkcolor = blue]{hyperref}
\usepackage{color}

\usepackage{tikz}									
\usetikzlibrary{matrix}
\usetikzlibrary{patterns}
\usetikzlibrary{positioning}
\usetikzlibrary{decorations.pathmorphing}
\usetikzlibrary{cd}

\usepackage{ytableau}
\usepackage{fullpage}
\usepackage[shortlabels]{enumitem}

\linespread{1.055}


\newtheorem{maintheorem}{Theorem}	
\newtheorem{maincorollary}[maintheorem]{Corollary}								
\newtheorem{theorem}{Theorem}[section]
\newtheorem{lemma}[theorem]{Lemma}
\newtheorem{proposition}[theorem]{Proposition}
\newtheorem{corollary}[theorem]{Corollary} 
\newtheorem{conjecture}[theorem]{Conjecture} 

\theoremstyle{definition}
								
\newtheorem{definition}[theorem]{Definition}
\newtheorem{example}[theorem]{Example}

\newtheorem{remark}[theorem]{Remark}
\newtheorem*{remark*}{Remark}

\numberwithin{theorem}{subsection}


\newcommand{\R}{\mathbb{R}}

\newcommand{\Z}{\mathbb{Z}}
\newcommand{\ZZ}{\mathbb{Z}}

\newcommand{\PP}{\mathbb{P}}
\newcommand{\NN}{\mathbb{N}}

\newcommand{\RR}{\mathbb{R}}

\newcommand{\G}{\mathbb{G}}
\newcommand{\GG}{\mathbb{G}}

\newcommand{\calA}{\mathcal{A}}
\newcommand{\calB}{\mathcal{B}}

\newcommand{\calO}{\mathcal{O}}

\newcommand{\calR}{\mathcal{R}}

\DeclareMathOperator{\Pic}{Pic}
\DeclareMathOperator{\Spec}{Spec}
\DeclareMathOperator{\Hom}{Hom}

\DeclareMathOperator{\trop}{trop}
\DeclareMathOperator{\val}{val}

\DeclareMathOperator{\Trop}{Trop}

\DeclareMathOperator{\pr}{pr}
\DeclareMathOperator{\id}{id}

\DeclareMathOperator{\Div}{Div}

\DeclareMathOperator{\Nm}{Nm}
\DeclareMathOperator{\PDiv}{PDiv}
\DeclareMathOperator{\Alb}{Alb}
\DeclareMathOperator{\coker}{coker}

\let\Rat\relax
\DeclareMathOperator{\Rat}{Rat}
\let\div\relax
\DeclareMathOperator{\div}{div}
\let\ord\relax
\DeclareMathOperator{\ord}{ord}
\let\Jac\relax
\DeclareMathOperator{\Jac}{Jac}
\let\im\relax
\DeclareMathOperator{\im}{im}


\title[Skeletons of Prym varieties and tropical Brill--Noether theory]{Skeletons of Prym varieties and Brill--Noether theory}

\author{Yoav Len}
\address{Mathematical Institute, University of St Andrews, St Andrews KY16 9SS, UK}
\email{\href{mailto:yoav.len@st-andrews.ac.uk}{yoav.len@st-andrews.ac.uk}}

\author{Martin Ulirsch}
\address{Institut f\"ur Mathematik, Goethe-Universit\"at Frankfurt, 60325 Frankfurt am Main, Germany}
\email{\href{mailto:ulirsch@math.uni-frankfurt.de}{ulirsch@math.uni-frankfurt.de}}

\subjclass[2010]{14T05; 14H40}


\begin{document}

\begin{abstract} 
We show that the non-Archimedean skeleton of the Prym variety associated to an unramified double cover of an  algebraic curve is naturally isomorphic (as a principally polarized tropical abelian variety) to the tropical Prym variety of the associated tropical double cover. This confirms a conjecture by Jensen and the first author. We prove a new upper bound on the dimension of the Prym--Brill--Noether locus for a generic unramified double cover in a dense open subset in the moduli space of unramified double covers of curves with fixed even gonality on the base. Our methods also give a new proof of the classical Prym--Brill--Noether Theorem for generic unramified double covers that is originally due to Welters and Bertram.
\end{abstract}

\maketitle

\setcounter{tocdepth}{1}
\tableofcontents


\section*{Introduction}

Prym varieties are a class of abelian varieties that are associated to covers of Riemann surfaces. They form a bridge between the geometry of curves and the geometry of abelian varieties, and provide a rare class of abelian varieties that may be exhibited explicitly. 

Let $X$ be smooth projective curve and let $\pi\colon \widetilde{X}\rightarrow X$ be an unramified double cover. The map $\pi$ induces natural \emph{norm homomorphism} 
\begin{equation*}
\Nm_\pi\colon \Pic(\widetilde{X})\longrightarrow\Pic(X)
\end{equation*}
 given by pushing forward divisors, i.e. by $\calO_{\widetilde{X}}(\widetilde{D})\mapsto\calO_X(\pi_\ast \widetilde{D})$ for all divisors $D$ on $X$. The kernel of $\Nm_\pi$ is a subgroup of $\Pic_0(X)$ consisting of two components. The component containing the identity is known as the \emph{Prym variety} $\Pr(X,\pi)$ associated with the unramified double cover. As explained in \cite{Mumford_Prym}, it carries a natural principal polarization, whose theta divisor $\Xi$ fulfills 
 \begin{equation*}
     i^\ast \widetilde{\Theta} =2\cdot \Xi,
 \end{equation*}
 where $i^\ast \widetilde{\Theta}$ denotes the pullback of the theta divisor on $\Jac(\widetilde{X})=\Pic_0(\widetilde{X})$.
 
 Fixing a point $q\in \widetilde{X}(K)$, there is a Prym theoretic analogue of the Abel--Jacobi map, known as the \emph{Abel--Prym} map $\alpha_{X,\pi}\colon \widetilde{X}\rightarrow \Pr(X,\pi)$. Explicitly, it is given by
\begin{equation*}
p\longmapsto i' \big(\calO_{\widetilde{X}}(p-q)\big) \,
\end{equation*}
where $i'$ denotes the dual homomorphism to the inclusion $i\colon\Pr(X,\pi)\hookrightarrow \Jac(\widetilde{X})$.

\subsection*{Tropical Prym varieties} In \cite[Section 6]{JensenLen_thetachars}, Jensen and the first author gave a tropical analogue of this construction: Let $\Gamma$ be a tropical curve and $\pi\colon\widetilde{\Gamma}\rightarrow\Gamma$ an unramified double cover. Again, this induces a natural \emph{tropical norm homomorphism}
\begin{equation*}
\Nm_\pi\colon\Pic(\widetilde{\Gamma})\longrightarrow \Pic(\Gamma)
\end{equation*}
given by pushing forward divisor classes, i.e. by $[\widetilde{D}]\mapsto \big[\pi_\ast \widetilde{D}\big]$. By Theorem \ref{thm_Prymvariety} below, the kernel of $\Nm_\pi$ has either one or two components and the component containing the identity carries a natural principal polarization; we say that it is the  \emph{tropical Prym variety $\Pr(\Gamma,\pi)$} associated to the unramified double cover $\pi\colon\widetilde{\Gamma}\rightarrow\Gamma$. Moreover, given a fixed point $q\in\widetilde{\Gamma}$, we may define a \emph{tropical Abel-Prym map} $\alpha_{\Gamma,\pi}\colon\widetilde{\Gamma}\rightarrow \Pr(\Gamma,\pi)$ by 
\begin{equation*}
p\longmapsto i' \big([p-q]\big)\ , 
\end{equation*}
where, again, $i'$ denotes the dual homomorphism to the inclusion $i\colon\Pr(\Gamma, \pi)\hookrightarrow\Jac(\widetilde{\Gamma})$.

 \subsection*{Skeletons of Prym varieties} Our first result is that the Prym construction behaves well with respect to tropicalization. Suppose that both $X$ and $\pi$ are defined over a non-Archimedean field $K$, i.e. a field that is complete with respect to a non-Archimedean absolute value. Let $\Gamma_X$ be the dual tropical curve of $X$. Then $\Gamma_X$ is naturally identified with the non-Archimedean skeleton of $X^{an}$ and there is a natural strong deformation retraction $\rho_X\colon X^{an}\rightarrow \Gamma_X$. 
 
 On the other hand, given an abelian variety  $A$ with split semistable reduction over $K$, by \cite{Berkovich_book}, there is a natural strong deformation retraction $\rho_A\colon A^{an}\rightarrow \Sigma(A)$ from $A^{an}$ onto a closed subset $\Sigma(A)$ of $A^{an}$ that has the structure of a tropical abelian variety, the \emph{non-Archimedean skeleton} of $A^{an}$. A (principal) polarization on $A$ naturally induces a (principal) polarization on $\Sigma(A)$.

\begin{maintheorem}\label{thm_skeletonofPrym=Prymofskeleton}
There is a canonical isomorphism
\begin{equation*}
\mu_{X,\pi}\colon\Pr(\Gamma_X, \pi^{trop}) \xlongrightarrow{\simeq} \Sigma\big(\Pr(X,\pi)\big)
\end{equation*}
of principally polarized tropical abelian varieties that commutes with the Abel-Prym maps, i.e. for which the natural diagram
\begin{center}\begin{tikzcd}
X^{an}\arrow[rrr,"\rho_X"]\arrow[d,"\alpha_{X,\pi}^{an}"']&&& \Gamma_X\arrow[d,"\alpha_{\Gamma,\pi^{trop}}"]\\
\Pr(X,\pi)^{an}\arrow[rr, "\rho_{\Pr(X,\pi)}"] && \Sigma\big(\Pr(X,\pi)\big) & \Pr(\Gamma_X, \pi^{trop})\arrow[l,"\sim"',"\mu_{X,\pi}"]
\end{tikzcd}\end{center}
commutes. 
\end{maintheorem}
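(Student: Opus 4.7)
The plan is to bootstrap from the known compatibility between non-Archimedean and tropical Jacobians, and then transport it to the Prym setting by functoriality of the skeleton construction. Recall that for any smooth projective curve $Y/K$ with split semistable reduction, there is a canonical isomorphism $\kappa_Y \colon \Sigma(\Jac(Y)) \xrightarrow{\sim} \Jac(\Gamma_Y)$ of principally polarized tropical abelian varieties, due to Bosch--L\"utkebohmert and Baker--Rabinoff, and moreover $\kappa_Y$ intertwines the analytification of the Abel--Jacobi map with its tropical counterpart. The key observation behind the proof is that the skeleton construction, regarded as a functor from abelian varieties over $K$ with split semistable reduction to tropical abelian varieties, is exact: it preserves kernels, cokernels and identity components. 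This follows from the compatibility of the Raynaud/Berkovich uniformization with homomorphisms of abelian varieties.

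Granted this, I would proceed in three steps. First, I would show that the skeletonization of the algebraic norm $\Nm_\pi \colon \Jac(\widetilde{X}) \to \Jac(X)$ agrees, under $\kappa_{\widetilde{X}}$ and $\kappa_X$, with the tropical norm $\Nm_{\pi^{\mathrm{trop}}} \colon \Jac(\Gamma_{\widetilde{X}}) \to \Jac(\Gamma_X)$. This reduces to checking the identity on classes of divisors of degree zero, where it follows from the compatibility $\rho_X \circ \pi^{\mathrm{an}} = \pi^{\mathrm{trop}} \circ \rho_{\widetilde{X}}$ of the retractions with $\pi$, combined with the fact that both norms are defined as pushforward of divisors. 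Second, taking identity components of the kernels on both sides and invoking exactness of the skeleton functor, the isomorphisms $\kappa_{\widetilde{X}}, \kappa_X$ restrict to a canonical isomorphism $\mu_{X,\pi} \colon \Pr(\Gamma_X, \pi^{\mathrm{trop}}) \xrightarrow{\sim} \Sigma(\Pr(X,\pi))$ of tropical abelian varieties. Third, I would check that $\mu_{X,\pi}$ respects the principal polarizations: the algebraic polarization is characterized by $i^\ast \widetilde{\Theta} = 2\Xi$, and the analogous statement holds tropically (see Theorem \ref{thm_Prymvariety}), so the compatibility of $\kappa_{\widetilde{X}}$ with the Jacobian theta divisors forces the two principal polarizations on the two sides of $\mu_{X,\pi}$ to coincide.

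The commutativity of the final diagram with the Abel--Prym maps is then formal: one factors $\alpha_{X,\pi}$ as the Abel--Jacobi map $\widetilde{X} \to \Jac(\widetilde{X})$ followed by the dual inclusion $i' \colon \Jac(\widetilde{X}) \to \Pr(X,\pi)$; the first factor is compatible with skeletonization by Baker--Rabinoff, and the second, being a homomorphism of abelian varieties, is compatible by functoriality combined with the previous steps. The main technical obstacle I anticipate lies in establishing the exactness of the skeleton functor in a form precise enough to simultaneously control identity components and polarizations. In particular, the kernel of $\Nm_\pi$ is typically disconnected, so one has to argue that the skeleton of the identity component equals the identity component of the skeleton of the full kernel; this should follow from the connectedness of the skeleton of any connected $K$-analytic group, but piecing this together with Raynaud uniformization in a coordinate-free fashion, and checking the analogous statements on the tropical side, is the point where the argument becomes delicate.
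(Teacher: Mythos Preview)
Your proposal follows essentially the same route as the paper: identify $\Sigma(\Nm_\pi)$ with $\Nm_{\pi^{\mathrm{trop}}}$ (the paper does this via a commutative cube built from Baker--Rabinoff and Proposition~\ref{prop_NormAbelJacobi}), invoke exactness of the skeleton functor on identity components of kernels (the paper's Corollary~\ref{cor_kernelskeleton}), and then check the principal polarizations match. The Abel--Prym compatibility argument you sketch is also the paper's.

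One caveat on your Step 3: the characterization $i^\ast\widetilde{\Theta}=2\Xi$ does \emph{not} always transfer to the tropical side. Theorem~\ref{thm_Prymvariety} shows that the tropical principal polarization $\lambda_\Xi$ satisfies $i^\ast\lambda_{\widetilde{\Theta}}=2\lambda_\Xi$ only when the dilation cycle of $\pi^{\mathrm{trop}}$ is trivial; when it is non-trivial, the induced polarization from $\Jac(\widetilde{\Gamma})$ is already principal on part of the lattice and the factor-of-$2$ relation fails globally. So you cannot simply match the two polarizations by appealing to uniqueness of a ``half'' of $i^\ast\lambda_{\widetilde{\Theta}}$. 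The paper circumvents this by working directly with the explicit bases of $H_1(\widetilde{\Gamma})$ and $H_1(\Gamma)$ from Lemma~\ref{lemma_spanningtrees}: one identifies the toric part of the Raynaud uniformization of $\Jac(\widetilde{X})$ with $H_1(\widetilde{\Gamma})\otimes\GG_m$, reads off the map induced by $\Nm_\pi$ as $\pi_\ast$ on $H_1$, and then observes in coordinates that the only principal polarization compatible with the tropicalization of $\phi_\Xi$ is the $\lambda_\Xi$ constructed in Theorem~\ref{thm_Prymvariety}. Your argument needs this explicit step (or an equivalent uniqueness statement) to close the gap.
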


In \cite{BakerRabinoff_skelJac=Jacskel}, Baker and Rabinoff show that the non-Archimedean skeleton $\Sigma\big(\Jac(X)\big)$ of the Jacobian $\Jac(X)^{an}$ is naturally isomorphic (as a principally polarized tropical abelian variety) to the tropical Jacobian $\Jac(\Gamma_X)$ of the dual tropical curve of $X$. With Theorem \ref{thm_skeletonofPrym=Prymofskeleton} we expand on their result and thereby confirm \cite[Conjecture 6.3]{JensenLen_thetachars}. We emphasize that our definition of $\Pr(X,\pi)$ differs slightly from the one in \cite{JensenLen_thetachars},  as we work with the Jacobian of the underlying metric graph, rather than the augmented tropical Jacobian.

\subsection*{Prym--Brill--Noether theory} Let $X$ be a smooth projective curve of genus $g$ and $\pi\colon \widetilde{X}\rightarrow X$ an unramified double cover. Rather than working with the components of the kernel of the norm map $\Nm_\pi$, 
it is occasionally more convenient to consider the  preimage $\Nm_\pi^{-1}(\omega_X)$ of the canonical line bundle in $\Pic_{2g-2}(\widetilde{X})$. Since the components parameterizing line bundles of positive or negative parity are naturally a torsor over $\Pr(X,\pi)$, the above results  transfer to this setting. 

This point of view paves the way to studying Brill--Noether loci in Prym varieties. Fix $r\geq -1$. In \cite{Welters_GiesekerPetri}, Welters defines the \emph{Prym--Brill--Noether locus} $V^r(X,\pi)$ to be the closed subset 
\begin{equation*}
V^r(X,\pi)=\big\{L\in\Pic_{2g-2}(\widetilde{X})\big\vert \Nm_\pi(L)=\omega_X, r(L)\geq r \textrm{ and } r(L) \equiv r \pmod 2\big\}
\end{equation*}
in $\Pic_{2g-2}(\widetilde{X})$. Bertram's existence theorem for Prym special divisors \cite[Theorem 1.4]{Bertram_existenceforPrymspecialdivisors} (also see \cite[Theorem 9]{DeConciniPragacz}) shows that this locus is non-empty, as long as $g-1-\binom{r+1}{2}\geq 0$. An elementary estimate (see \cite[Proposition 1.4]{Welters_GiesekerPetri}) then shows that 
 \begin{equation}\label{eq_lowerbound}
 \dim V^r(X,\pi)\geq g-1-\binom{r+1}{2}
 \end{equation}
 for all curves $X$ of genus $g$ and all unramified double covers $\pi\colon \widetilde{X}\rightarrow X$. Using these two facts, Welters' Prym-Gieseker-Petri Theorem \cite[Theorem 1.11]{Welters_GiesekerPetri} implies that, for a general unramified double cover, inequality \eqref{eq_lowerbound} is an equality, namely a \emph{Brill--Noether theorem for double covers}.

\subsection*{Prym--Brill--Noether theory with gonality} In contrast, very little is known about special Prym curves. Using Theorem \ref{thm_skeletonofPrym=Prymofskeleton}, and expanding on the work of Pflueger \cite{Pflueger_kgonalcurves} (whose use of chains of loops builds of course on  \cite{CoolsDraismaPayneRobeva}), we find a previously unknown upper bound on the dimension of $V^r(X,\pi)$ for general unramified double covers of curves $X$ whose gonality is either even or sufficiently large. 

Denote by $\calR_g$  the moduli space of unramified double covers $\pi\colon \widetilde{X}\rightarrow X$ of a smooth projective curve of genus $g\geq 2$, as e.g. introduced in \cite{Beauville_Prym&Schottky}. We refer to the locus of unramified double covers $\pi\colon \widetilde{X}\rightarrow X$ for which $X$ has gonality $k\geq 2$ as the \emph{$k$-gonal locus} in $\calR_g$. 
For convenience, denote $\ell=\lceil\frac{k}{2}\rceil$. For $r\geq -1$, we write 
\begin{equation*}
  n=n(r,\ell) =
  \begin{cases}
                    \binom{\ell+1}{2}+\ell(r-\ell) &
                    \text{if $\ell\leq r-1$} \\
                    \binom{r+1}{2} & \text{if $\ell > r-1$.}
  \end{cases}
\end{equation*}

\begin{maintheorem}\label{thm_PrymBrillNoetherwithfixedgonality}
Suppose $k\geq 2$ is either even or greater than $2r-2$. There is a non-empty open subset in the $k$-gonal locus of $\calR_g$ such that for every unramified double cover  $\pi\colon\widetilde{X}\rightarrow X$ in this open subset we have:
\begin{equation} \label{eq_upperbound}
\dim V^r(X,\pi) \leq g-1 - n(r,\ell) \ .
\end{equation}
In particular, the Prym--Brill--Noether locus $V^r(X,\pi)$ is empty if $g-1 < n(r,\ell)$. 
\end{maintheorem}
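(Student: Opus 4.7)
The proof proceeds via the now-standard tropical degeneration paradigm: exhibit one unramified double cover for which \eqref{eq_upperbound} holds, then propagate the bound to an open subset of the $k$-gonal locus by upper semicontinuity of the fiber dimension of $V^r$ along $\mathcal{R}_g$. The essential new ingredient is Theorem~\ref{thm_skeletonofPrym=Prymofskeleton}, which plays the role that the Baker--Rabinoff identification $\Sigma(\Jac(X))\cong \Jac(\Gamma_X)$ plays in the classical Brill--Noether story.

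\textbf{Step 1 (Tropical model and lifting).} Take $\Gamma$ to be a chain of $g$ loops with edge lengths chosen according to Pflueger's prescription for tropical $k$-gonality, and construct an unramified double cover $\pi^{trop}\colon\widetilde{\Gamma}\to\Gamma$ compatible with this structure. The parity hypothesis on $k$ enters here: when $k$ is even, one uses a \emph{dilated} double cover that doubles every other edge along the $k$-gonal pencil; when $k>2r-2$, the second branch of $n(r,\ell)$ is in effect and a free (non-trivial but combinatorially simple) cover suffices. Standard lifting results for admissible double covers realize $\pi^{trop}$ as the dual tropical datum of an unramified double cover $\pi\colon \widetilde{X}\to X$ over some non-Archimedean field $K$, with $X$ of gonality exactly $k$.

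\textbf{Step 2 (Reduction to a tropical inequality).} By Theorem~\ref{thm_skeletonofPrym=Prymofskeleton} the skeleton $\Sigma(\Pr(X,\pi))$ is canonically isomorphic (as a principally polarized tropical abelian variety) to $\Pr(\Gamma,\pi^{trop})$, and the Abel--Prym diagram commutes. Combining this with a Prym-theoretic variant of Baker's specialization lemma, applied to divisors of $\widetilde{X}$ with prescribed norm and rank, one obtains
\begin{equation*}
\dim V^r(X,\pi)\ \leq\ \dim V^r(\Gamma,\pi^{trop}).
\end{equation*}
This reduces the theorem to the purely combinatorial inequality $\dim V^r(\Gamma,\pi^{trop})\leq g-1-n(r,\ell)$.

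\textbf{Step 3 (Combinatorial bound) and obstacle.} Bound $\dim V^r(\Gamma,\pi^{trop})$ by adapting Pflueger's displacement-tableaux bookkeeping to the Prym setting. Each divisor class in $V^r(\Gamma,\pi^{trop})$ is encoded by a combinatorial datum on the chain --- morally a pair of displacement tableaux linked by the norm condition --- and the quantity $n(r,\ell)$ arises as the maximal number of independent displacements permitted under the joint constraints. The main obstacle is precisely this step: one must simultaneously enforce the rank inequality $r(\widetilde{D})\geq r$, the norm constraint $\Nm_{\pi^{trop}}(\widetilde{D})\sim K_\Gamma$, and the parity congruence $r(\widetilde{D})\equiv r\pmod 2$, and then verify that the resulting parameter count matches the claimed sharp bound rather than a weaker one. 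The hypothesis that $k$ is even or $k>2r-2$ is exactly what allows the chosen $\pi^{trop}$ to be sufficiently generic for the displacement-tableau count to saturate at $n(r,\ell)$; in the complementary range one expects the combinatorial model to degenerate, which is why the theorem is silent there.
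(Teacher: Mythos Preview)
Your overall paradigm---tropical degeneration, specialization, combinatorial bound---matches the paper, but several of the moving parts are misidentified, and one genuine ingredient is missing.

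First, the tropical model in Step~1 is not what the paper uses. The paper takes a single construction, the \emph{folded chain of loops} (a chain of $2g-1$ loops topologically double-covering a chain of $g$ loops), and this is a free topological cover with empty dilation cycle in \emph{all} cases, independent of $k$. There is no ``dilated double cover'' for even $k$; a dilated cover would land you in part~(ii) of Theorem~\ref{thm_Prymvariety} (connected kernel, different polarization behavior), which is not the regime in which the tableau calculus of Sections~3--4 operates. Relatedly, your diagnosis that the parity hypothesis on $k$ enters in Step~1 is incorrect: the hypothesis plays no role whatsoever in the construction or in the lifting (Lemma~\ref{lemma_liftingcovers}). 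It enters only in Step~3, specifically in Lemma~\ref{lem:upperTriangularTableau}, where the count of symbols in a $k$-uniform lower-triangular tableau achieves the value $n(r,\ell)$ precisely when $k$ is even or $k>2r-2$.

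Second, Step~2 has a genuine gap. Baker's specialization inequality together with Theorem~\ref{thm_skeletonofPrym=Prymofskeleton} gives only the set-theoretic inclusion $\Trop\big(V^r(X,\pi)\big)\subseteq V^r(\Gamma,\pi^{trop})$. To pass from this to the dimension inequality $\dim V^r(X,\pi)\leq \dim V^r(\Gamma,\pi^{trop})$ you must know that $\dim V^r(X,\pi)=\dim\Trop\big(V^r(X,\pi)\big)$, and tropicalization does not preserve dimension in general. The paper invokes Gubler's Bieri--Groves theorem for subvarieties of \emph{totally degenerate} abelian varieties, which applies here because the folded chain of loops is trivalent and weightless, forcing $\Pr(X,\pi)$ to be maximally degenerate. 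You omit this step entirely; without it the argument does not close.
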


\begin{remark*}
Following the completion of this manuscript, the first author together with Creech, Ritter, and Wu extended the theorem to unramified double covers of curves of any gonality  \cite[Corollary B]{CLRW_PBN}. 
\end{remark*}

As soon as $k\geq 3$, the moduli space of chains of covers $\widetilde{X}\rightarrow X\rightarrow \PP^1$, where the first arrow is an unramified double cover and the second arrow is a degree $k$ cover with simple ramification, is irreducible \cite[Theorem 2]{BiggersFried}; the $k$-gonal locus in $\calR_g$ is the closure of the image of this moduli space and is therefore irreducible when $k\geq 3$. So in this case the open subset in Theorem \ref{thm_PrymBrillNoetherwithfixedgonality} is dense and it makes sense to talk about a generic double cover $\pi\colon \widetilde{X}\rightarrow X$ in the $k$-gonal locus. 

For generic unramified double covers of curves $X$ of gonality $k\geq 2r-1$, the lower bound \eqref{eq_lowerbound} tells us that inequality \eqref{eq_upperbound} is, in fact, an equality.  Bertrams's  existence result \cite[Theorem 1.4]{Bertram_existenceforPrymspecialdivisors} implies that the emptiness criterion is necessary as well.

\begin{maincorollary}\label{cor_PrymBrillNoether}
Let $k\geq 2r-1$. There is a non-empty open subset in the $k$-gonal locus of $\calR_g$ such that for every unramified double cover in this open subset we have 
\begin{equation*}
    \dim V^r(X,\pi)=g-1 -\binom{r+1}{2} \ .
\end{equation*}
In particular, the Prym--Brill--Noether locus $V^r(X,\pi)$ is empty if and only if $g-1<\binom{r+1}{2}$. 
\end{maincorollary}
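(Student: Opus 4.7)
The plan is to derive Corollary C directly from Theorem B together with the classical lower bound \eqref{eq_lowerbound} and Bertram's existence theorem \cite[Theorem 1.4]{Bertram_existenceforPrymspecialdivisors}, essentially by showing that the hypothesis $k\geq 2r-1$ puts us into the regime where the piecewise formula for $n(r,\ell)$ collapses to $\binom{r+1}{2}$.

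First I would verify the combinatorial identity: with $\ell = \lceil k/2 \rceil$, the assumption $k\geq 2r-1$ gives $\ell \geq \lceil (2r-1)/2\rceil = r$, so in particular $\ell > r-1$ and thus $n(r,\ell) = \binom{r+1}{2}$. Since $k\geq 2r-1 > 2r-2$, the hypotheses of Theorem \ref{thm_PrymBrillNoetherwithfixedgonality} are satisfied, and hence there is a non-empty open subset $U$ of the $k$-gonal locus in $\calR_g$ on which
\begin{equation*}
\dim V^r(X,\pi) \leq g-1-\binom{r+1}{2}.
\end{equation*}

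Next I would invoke the classical lower bound. Whenever $g-1-\binom{r+1}{2}\geq 0$, Bertram's existence theorem guarantees that $V^r(X,\pi)$ is non-empty for every unramified double cover $\pi\colon \widetilde{X}\to X$, and then the elementary estimate of Welters \cite[Proposition 1.4]{Welters_GiesekerPetri} recalled in \eqref{eq_lowerbound} gives $\dim V^r(X,\pi) \geq g-1 - \binom{r+1}{2}$. Combining this with the upper bound from the previous paragraph on the open subset $U$ yields the desired equality in the case $g-1\geq \binom{r+1}{2}$.

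Finally, for the ``if and only if'' statement, I would argue in both directions. If $g-1 < \binom{r+1}{2}$, then the upper bound of Theorem B forces $\dim V^r(X,\pi) < 0$ on the open set $U$, which simply means $V^r(X,\pi) = \emptyset$ for every $(X,\pi) \in U$. Conversely, if $g-1 \geq \binom{r+1}{2}$, Bertram's theorem already guarantees non-emptiness for \emph{every} unramified double cover, so in particular for those in $U$. Since there is no genuine obstacle here beyond checking the combinatorial collapse of $n(r,\ell)$ and correctly threading the existing bounds together, the main substance of Corollary \ref{cor_PrymBrillNoether} lies entirely in Theorem \ref{thm_PrymBrillNoetherwithfixedgonality}; the deduction itself is essentially formal.
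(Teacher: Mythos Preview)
Your proposal is correct and follows essentially the same approach as the paper. The paper does not give a formal proof of Corollary~\ref{cor_PrymBrillNoether}; it simply remarks that the lower bound~\eqref{eq_lowerbound} turns the inequality~\eqref{eq_upperbound} into an equality when $k\geq 2r-1$, and that Bertram's existence result makes the emptiness criterion necessary---which is precisely the argument you have written out in detail.
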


When $2k-2\geq g$, the general curve is $k$-gonal, so Corollary \ref{cor_PrymBrillNoether} coincides with the previously known Brill--Noether theorem for general double covers. 
However, the corollary extends the precise determination of the dimension to the range $4r-2\leq 2k < g+2$.

In \cite[Theorem 1.1 a)]{Hoering_PBN} H\"oring shows that for an arbitrary hyperelliptic base curve $X$ of genus $g\geq 6$ we have $\dim V^2(X,\pi)=g-3$ for all unramified double cover $\pi\colon \widetilde{X}\rightarrow X$. Theorem \ref{thm_PrymBrillNoetherwithfixedgonality} recovers this number as an upper bound for generic double covers $\pi\colon \widetilde{X}\rightarrow X$ by taking $r=2$ and $\ell=1$ in \eqref{eq_upperbound}. If $X$ is not hyperelliptic (and still of genus $g\geq 6)$, then it follows from \cite[Theorem 1.1 b)]{Hoering_PBN}  that $\dim V^2(X,\pi)=g-4$. We recover this equality for generic double covers by setting $r=2$ in Corollary \ref{cor_PrymBrillNoether}.

Our proof works in all characteristics prime to $2$ and $k$ and so, in particular, in characteristic zero. We expect inequality \eqref{eq_upperbound} to be an equality when $g\gg k$ 
and the emptiness condition to be necessary, even in the case of $X$ having even gonality $k< 2r$ (see \cite[Conjecture 3.9]{CLRW_PBN} and the surrounding discussion for more details). An adjusted version of the approach by Jensen and Ranganathan \cite{JensenRanganathan_BrillNoetherwithfixedgonality} using logarithmic stable maps to rational normal scrolls might provide  the desired lower bound. We hope to return to this part of the story in the near future. 

\subsection*{Further remarks and complements} 
 Let $\calA_g$ be the moduli space of principally polarized abelian varieties. There is a natural \emph{Prym-Torelli morphism} $\pr\colon\calR_g\rightarrow \calA_{g-1}$ that associates to an unramified double cover $\pi\colon \widetilde{X}\rightarrow X$ the associated Prym variety $\Pr(X,\pi)$. Theorem \ref{thm_skeletonofPrym=Prymofskeleton} says that the Prym-Torelli morphism  naturally commutes with tropicalization, i.e. that the diagram
\begin{center}\begin{tikzcd}
\calR_g^{an}\arrow[rr,"\trop_{\calR_g}"]\arrow[d,"\pr^{an}"']&&R_g^{trop}\arrow[d,"\pr^{trop}"]\\
\calA_{g-1}^{an}\arrow[rr,"\trop_{\calA_g}"] && A_{g-1}^{trop} 
\end{tikzcd}\end{center}
commutes. The reader may find a definition of the tropicalization map $\trop_{\calR_g}$ in \cite{CavalieriMarkwigRanganathan_admissiblecovers} and of $\trop_{\calA_g}$ in \cite{Viviani_tropicalTorelli}. We also refer to reader to \cite{CaporasoMeloPacini} for the closely related tropicalization map for the moduli spaces of curves with a theta characteristic.

There is a natural modular tropicalization map
\begin{equation*}
\trop_{X,\pi}\colon \Pr(X,\pi)^{an}\longrightarrow \Pr(\Gamma_X,\pi^{trop})
\end{equation*}
from the Berkovich space $\Pr(X,\pi)^{an}$ to $\Pr(\Gamma_X,\pi^{trop})$ that is induced by the pointwise tropicalization of $D$. The commutativity of the isomorphism in Theorem \ref{thm_skeletonofPrym=Prymofskeleton} with the Abel-Prym map implies that the diagram
\begin{equation*}
    \begin{tikzcd}
        \Pr(X,\pi) \arrow[rrd, bend left, "\trop_{X,\pi}"] \arrow[rd,"\rho_{\Pr(X,\pi)}"']&&\\
        & \Sigma\big(Pr(X,\pi)\big) & \Pr(\Gamma_X,\pi^{trop})\arrow[l,"\mu_{x,\pi}","\sim"']
    \end{tikzcd}
\end{equation*}
commutes. This allows us to apply both Baker's specialization inequality from \cite{Baker_specialization} and Gubler's Bieri-Groves Theorem \cite{Gubler_trop&nonArch} for abelian varieties to prove Theorem \ref{thm_PrymBrillNoetherwithfixedgonality}.


\subsection*{Acknowledgements} 
We thank Dmitry Zakharov for pointing out a gap in the proof of Lemma~\ref{lemma_spanningtrees}. 
We thank Matt Baker, Gavril Farkas, Martin M\"oller, Dave Jensen, Angela Ortega, Nathan Pflueger, and Dhruv Ranganathan  for insightful discussions. We also thank the referees for their helpful comments and remarks.
  M.U. acknowledges support from the LOEWE-Schwerpunkt ``Uniformi\-sierte Strukturen in Arithmetik und Geometrie''. This project  has  received  funding  from  the  European Union's Horizon 2020 research and innovation programme  under the Marie-Sk\l odowska-Curie Grant Agreement No. 793039. \includegraphics[height=1.7ex]{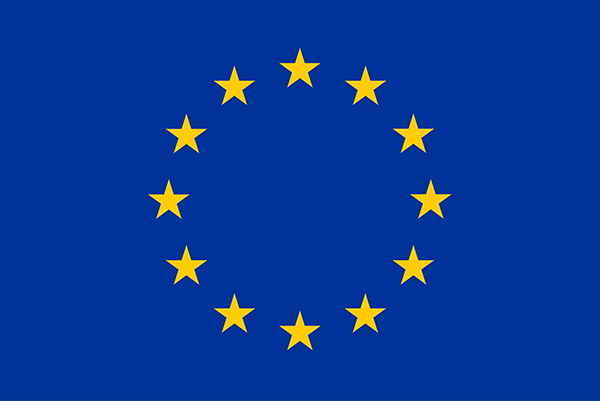}


\section{Tropical norm maps and the Prym variety} 

We begin by recalling the theory of tropical abelian varieties from \cite{FosterRabinoffShokriehSoto_tropicaltheta}, and  develop the tropical theory of Prym varieties expanding on \cite[Section 6]{JensenLen_thetachars}. See  \cite[Appendix B.1]{ACGHI} for the classical algebraic treatment. 


\subsection{Tropical abelian varieties}

Let $N$ be a free finitely generated abelian group. Set $N_\R=N\otimes_\ZZ \R$ and let $\Lambda\subseteq N_\R$ be a lattice of full rank. The quotient $\Sigma=N_\R/\Lambda$ is known as a \emph{real torus with integral structure}, where the term ``integral structure" refers to the choice of lattice $N\subseteq N_\R$, which  often differs from $\Lambda$ (see \cite[Section 2.1]{FosterRabinoffShokriehSoto_tropicaltheta}). Let $\Sigma_i=N^i_\R/\Lambda_i$ (for $i=1,2$) be two real tori with integral structure. There is a one-to-one correspondence between homomorphisms $f\colon \Sigma_1\rightarrow\Sigma_2$ of real tori and $\R$-linear homomorphisms $\widetilde{f}\colon N^1_\R\rightarrow N^2_\R$ such that $\widetilde{f}(\Lambda_1)\subseteq \Lambda_2$. We say that $f\colon\Sigma_1\rightarrow \Sigma_2$ is a \emph{homomorphism of real tori with integral structure} if $\widetilde{f}\colon N_\R^1\rightarrow N_\R^2$ is induced by a $\Z$-linear map $ N^1\rightarrow N^2$ (also denoted by $\widetilde{f}$). 

Let $M$ and $M'$ be two finitely generated free abelian groups of the same rank and let 
\begin{equation*}
\langle .,.\rangle \colon M'\times M\longrightarrow \R
\end{equation*}
be a non-degenerate pairing. We may think of $M'$ as a lattice in $N_\R$ via the embedding
$m'\mapsto \langle m',.\rangle\in\Hom(M,\R)=N_\R$ (and of $M$ as a lattice in $N_\R'$ via the embedding $m\mapsto\langle .,m\rangle\in\Hom(M',\R)=N_\R'$ respectively). We recall \cite[Definition 2.6]{FosterRabinoffShokriehSoto_tropicaltheta}:

\begin{definition}
The quotient $\Sigma=N_\R/M'$ is said to be a \emph{tropical abelian variety}, if there is a homomorphism $\lambda\colon M'\rightarrow M$ such that the bilinear form
\begin{equation*}\begin{split}
    \langle .,\lambda(.)\rangle \colon M'\times M'&\longrightarrow \R\\
    (m_1',m_2')&\longmapsto \langle m_1',\lambda(m_2')\rangle
\end{split}\end{equation*}
is symmetric and positive definite. 
\end{definition}

Write $\Sigma'=N_\R'/M$ and denote by $\widetilde{\phi}\colon N_\R\rightarrow N_\R'$ the map induced by $\lambda$. The map $\widetilde{\phi}$ takes $M'$ to $M$ and therefore induces a homomorphism $\phi\colon\Sigma\rightarrow \Sigma'$ of real tori with integral structure. We say that $\Sigma'$ is the \emph{dual tropical abelian variety} of $\Sigma$, and $\phi$ is called a \emph{polarization}. A polarization is said to be \emph{principal} if $\lambda$ is an isomorphism. 

A \emph{homomorphism} $f\colon \Sigma_1\rightarrow\Sigma_2$ of tropical abelian varieties is a homomorphism of real tori with integral structures. The \emph{dual homomorphism} is the unique homomorphism $f'\colon \Sigma_2'\rightarrow \Sigma_1'$ such that 
\begin{equation*}
    \big\langle\widetilde{f}(m_1'),m_2\big\rangle=\big\langle m_1',\widetilde{f}'(m_2)\big\rangle
\end{equation*}
for all $m_1'\in M'_1$ and $m_2\in M_2$. The association $f\mapsto f'$ defines a contravariant functor and we have a natural isomorphism $\Sigma\xrightarrow{\sim}\Sigma''$.

 Let $(\Sigma_i,\phi_i)$ be polarized tropical abelian varieties (for $i=1,2$) and $f\colon \Sigma_1\rightarrow \Sigma_2$ be a homomorphism. 
 Denote by $\ker(f)_0$ the connected component of the kernel of $f$ containing zero and by $\coker(f)$ the cokernel of $f$.
 
 \begin{proposition}\label{prop_tropker&coker}
 Both $\ker(f)_0$ and $\coker(f)$ are tropical abelian varieties. 
 \begin{enumerate}[(i)]
 \item The dual of $\ker(f)_0$ is $\coker(f')$ and  $\phi_1$ induces a polarization $i^\ast\phi_1\colon\ker(f)_0\rightarrow \coker(f')$. 
 \item The dual of $\coker(f)$ is $\ker(f')_0$ and  $\phi_2$ induces a polarization $q_\ast\phi_2\colon\coker(f)\rightarrow\ker(f')_0$.
 \end{enumerate}
 \end{proposition}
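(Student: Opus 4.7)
The plan is to work concretely with lifts to vector spaces and then match things up via the canonical pairings. Write $\Sigma_i=V_i/M_i'$ and $\Sigma_i'=V_i'/M_i$ with $V_i=N^i_{\R}$ and $V_i'=(N_i')_{\R}$, and let $\widetilde{f}\colon V_1\to V_2$ be the $\R$-linear lift of $f$, which by hypothesis sends $M_1'$ to $M_2'$ and is $\Z$-linear with respect to $N^1,N^2$. Its adjoint under the canonical pairings $V_i\times V_i'\to \R$ is $\widetilde{f}'\colon V_2'\to V_1'$, and one has $\ker\widetilde{f}'=(\im\widetilde{f})^{\perp}$ and $\im\widetilde{f}'=(\ker\widetilde{f})^{\perp}$. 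Set $K=\ker\widetilde{f}\subseteq V_1$ and $I=\im\widetilde{f}\subseteq V_2$.

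\emph{Step 1 (real tori with integral structure).} Because $\widetilde{f}$ is $\Z$-linear with respect to both pairs of lattices, the subspaces $K$ and $I$ are rational with respect to all four of $M_i',N^i$. Hence $\Lambda_K:=K\cap M_1'$ is a full-rank lattice in $K$, $N^K:=K\cap N^1$ is a free abelian group with $N^K\otimes\R=K$, and $\ker(f)_0=K/\Lambda_K$ is a real torus with integral structure $N^K$. Analogously, $V_2/I$ carries the integral structure $N^2/(N^2\cap I)$ (torsion-free by $N^2$-rationality of $I$), and $\overline{M_2'}:=M_2'/(M_2'\cap I)$ is a full-rank lattice in $V_2/I$, making $\coker(f)=(V_2/I)/\overline{M_2'}$ a real torus with integral structure.

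\emph{Step 2 (duality).} The canonical pairing $V_1\times V_1'\to\R$ descends to a perfect pairing $K\times (V_1'/K^{\perp})\to\R$, which further restricts to a non-degenerate pairing $\Lambda_K\times\bigl(M_1/(M_1\cap K^{\perp})\bigr)\to\R$; here one uses that $M_1\cap K^{\perp}$ contains $\widetilde{f}'(M_2)$ as a full-rank sublattice. Combined with the identity $\coker(f')=V_1'/(\im\widetilde{f}'+M_1)=V_1'/(K^{\perp}+M_1)$, this yields a canonical isomorphism $\ker(f)_0^{\vee}\cong\coker(f')$ of real tori with integral structure. Replacing $\widetilde{f}$ by $\widetilde{f}'$ throughout then produces the dual identification $\coker(f)^{\vee}\cong\ker(f')_0$.

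\emph{Step 3 (polarizations).} For (i), the composition $K\hookrightarrow V_1\xrightarrow{\widetilde{\phi}_1}V_1'\twoheadrightarrow V_1'/K^{\perp}$ carries $\Lambda_K$ into $M_1/(M_1\cap K^{\perp})$, because $\widetilde{\phi}_1(M_1')\subseteq M_1$. It therefore descends to a homomorphism $i^{*}\phi_1\colon\ker(f)_0\to\coker(f')$ of real tori with integral structure. The corresponding $\R$-bilinear form on $\Lambda_K\times\Lambda_K$ is simply the restriction of the polarizing form $\langle\,\cdot\,,\lambda_1(\,\cdot\,)\rangle$ from $M_1'\times M_1'$, so it inherits symmetry and positive-definiteness, and $i^{*}\phi_1$ is a polarization. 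For (ii), rather than constructing $q_{*}\phi_2$ by hand (awkward, because $\phi_2$ neither restricts to a subgroup of $\Sigma_2$ nor descends to a quotient in an obvious way), I would pass through duality: the positive-definiteness of $\phi_2$ forces the lift $\widetilde{\phi}_2\colon V_2\to V_2'$ to be an $\R$-linear isomorphism, which allows one to transport $\phi_2$ to a polarization $\psi$ on $\Sigma_2'$; applying (i) to $f'\colon\Sigma_2'\to\Sigma_1'$ with $\psi$ yields a polarization $i^{*}\psi\colon\ker(f')_0\to\coker(f'')=\coker(f)$, and its dual under the identification $\coker(f)^{\vee}\cong\ker(f')_0$ of Step~2 is the sought $q_{*}\phi_2$.

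The hardest part is (ii): the route via biduality obliges one to track the lift and integral structure of the transported polarization $\psi$ on $\Sigma_2'$, and to confirm that the map produced is genuinely symmetric and positive definite (so a polarization) rather than merely a homomorphism. A direct route (using the $\phi_2$-orthogonal splitting $V_2=I\oplus I^{\perp,\beta_2}$ to identify $V_2/I$ with $I^{\perp,\beta_2}$ and then applying $\widetilde{\phi}_2$) also works, but obscures the integrality of the resulting lattice map, which is why I prefer the duality argument.
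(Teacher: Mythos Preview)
Your Steps 1, 2, and 3(i) are correct and follow the paper's line: the paper constructs $i^\ast\lambda_1$ as the composition $M_K'\hookrightarrow M_1'\xrightarrow{\lambda_1}M_1\twoheadrightarrow M_1/\im(f'_M)^{sat}$, which is exactly your restriction of the polarizing form. Your Step 2 spells out the duality between $\ker(f)_0$ and $\coker(f')$ more explicitly than the paper does.

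Your Step 3(ii), however, has a gap. Transporting $\phi_2$ to a polarization $\psi$ on $\Sigma_2'$ via the $\R$-isomorphism $\widetilde{\phi}_2$ means using $\widetilde{\phi}_2^{-1}\colon V_2'\to V_2$ as the lift of $\psi$; but $\widetilde{\phi}_2^{-1}$ carries $M_2$ into $M_2'$ only when $\lambda_2\colon M_2'\to M_2$ is surjective, i.e.\ only when $\phi_2$ is principal. (Take $M_2'=M_2=\Z^2$ with the standard pairing and $\lambda_2=\left(\begin{smallmatrix}2&1\\1&1\end{smallmatrix}\right)$: then $\lambda_2^{-1}$ has non-integer entries.) So $\psi$ is in general not a homomorphism of integral real tori, and you cannot apply (i) to the pair $(f',\psi)$. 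There is a second issue: even granting $\psi$, part (i) produces a polarization $i^\ast\psi\colon\ker(f')_0\to\coker(f)$, and the dual of a symmetric isogeny $A\to A'$ is again a map $A\to A'$, not $A'\to A$; so ``its dual'' does not hand you a map $\coker(f)\to\ker(f')_0$. Your direct route via the $\beta_2$-orthogonal splitting has the same integrality obstruction, contrary to your suggestion that the duality argument avoids it.

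The paper's own treatment of (ii) is only the phrase ``follows from the dual argument and is left to the avid reader'', so there is no detailed argument to compare against. A repair of your approach: since $\lambda_2$ has finite cokernel, some positive multiple $n\widetilde{\phi}_2^{-1}$ does send $M_2$ into $M_2'$ and defines an honest polarization on $\Sigma_2'$; part (i) applied to $f'$ then gives a polarization on $\ker(f')_0$, and one more scaling yields one on its dual $\coker(f)$. This determines $q_\ast\phi_2$ only up to a positive integer factor, but that suffices for the assertion that $\coker(f)$ is a tropical abelian variety.
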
 

So the category of polarized tropical abelian varieties an abelian category.
 Here $i$ refers to the inclusion $\ker(f)_0\hookrightarrow \Sigma_1$ and the letter $q$ to the quotient $\Sigma_2\rightarrow \Sigma_2/\im(f)$. We similarly write $i^\ast\lambda_1$ and $q_\ast\lambda_2$ the induced homomorphisms of integral structures.
 
 \begin{proof} The homomorphism $f\colon \Sigma_1\rightarrow\Sigma_2$ is induced by a linear homomorphism $\widetilde{f}\colon N^1_\R\rightarrow N_\R^2$ such that $\widetilde{f}(M_1')\subseteq M_2'$. Let $N_\R^K=\ker(\widetilde{f})$. Denote the restriction of $\widetilde{f}$ to a homomorphism $M_1'\rightarrow M_2'$ by $f_{M'}$ and let $M_K'=\ker(f_M)$. Then $\ker(f)_0$ is equal to the real torus with integral structure $N_\R^K/M_K'$. Similarly, we set $N_\R^C=\coker(\widetilde{f})$ and define $M_C'$ to be the quotient $M_2'/\im(f_{M'})^{sat}$ of $M_2'$ by the saturation of $\im(f_{M'})$ in $M_2'$. Then $\coker(f)$ is the torus $N_\R^C/M_C'$.
 Finally, the induced polarization $i^\ast\lambda_1$ is given by the composition
\begin{equation*}
    i^\ast\lambda_1\colon M'_K\xlongrightarrow{\widetilde{i}_{M'}} M_1'\xlongrightarrow{\lambda_1}M_1\xlongrightarrow{\widetilde{q}'} M_C \ .
\end{equation*}
This proves Part (i). Part (ii) follows from the dual argument and is left to the avid reader.
\end{proof}

We remark that, even when both $\phi_i$ (for $i=1,2$) are principal polarizations, the induced polarization $i^\ast\phi_1$ on $\ker(f)_0$ may not be a principal polarization (see Theorem \ref{thm_Prymvariety} below).


\subsection{Harmonic morphisms}\label{section_2:1} 

A \emph{metric graph} is a finite graph $G$ (possibly with loops and multiple edges) together with an edge length function $\ell \colon E(G)\rightarrow\R_{>0}$. We naturally associate to $(G,\ell)$ a metric space $\big\vert (G,\ell)\big\vert$,  by introducing an interval of length $\ell(e)$ for every edge $e$ in $G$, and gluing all of them according to the incidences of $G$. In this case, we say that the metric graph $(G,\ell)$ is a \emph{model} for the  metric space $\big\vert(G,\ell)\big\vert$. 
A \emph{tropical curve}  is a metric space $\Gamma$  together with a function $h\colon \Gamma\rightarrow\Z_{\geq 0}$ that is supported on the vertices of some model $(G,\ell)$ for $\Gamma$.

Let $\Gamma$ and $\widetilde{\Gamma}$ be tropical curves. A continuous map $\pi\colon \widetilde{\Gamma}\rightarrow\Gamma$ is called a \emph{morphism} if there exist models $(G,\ell)$ of $\Gamma$ and $(\widetilde{G},\widetilde{\ell})$ of $\widetilde{\Gamma}$ such that 
\begin{itemize}
    \item $\pi(V)\subseteq \widetilde{V}$,
    \item $\pi^{-1}(\widetilde{E})\subseteq E$, and
    \item the restriction to every edge $\widetilde{e}\in \widetilde{E}$ is a dilation by a factor $d_{\widetilde{e}}(\pi)\in\Z_{\geq 0}$. 
\end{itemize} 
We say that $\pi$ is \emph{finite} if $d_{\widetilde{e}}(\pi)>0$ for all edges $\widetilde{e}$ in $\widetilde{\Gamma}$.  

\begin{definition}\label{def_harmonicmorphism}
A finite morphism $f\colon \widetilde{\Gamma}\rightarrow\Gamma$ is said to be \emph{harmonic} at $\widetilde{p}\in\widetilde{\Gamma}$, if the sum
\begin{equation*}
    d_{\widetilde{p}}(\pi):=\sum_{\widetilde{v}\in T_{\widetilde{p}}\widetilde{\Gamma}, f(\widetilde{v})=v} d_{\widetilde{v}}(\pi)
\end{equation*}
does not depend on the choice of $v\in T_p\Gamma$. Here we write $T_p\Gamma$ (or $T_{\widetilde{p}}\widetilde{\Gamma}$) for the set of tangent directions emanating from $p$ (or $\widetilde{p}$ respectively). A finite morphism is called \emph{harmonic} if is surjective and harmonic at every point of $\widetilde{\Gamma}$. 
\end{definition}
For a harmonic morphism $\pi\colon\widetilde{\Gamma}\rightarrow\Gamma$, the number 
\begin{equation*}
    d_p(\pi)=\sum_{\widetilde{p}\in\widetilde{\Gamma}, \pi(\widetilde{p})=p} d_{\widetilde{p}}(\pi)
\end{equation*}
does not depend on the point $p\in\Gamma$ and is called the \emph{degree} of $\pi$. A harmonic morphism is said to be \emph{unramified} if the \emph{ramification divisor} $R_\pi=K_{\widetilde{\Gamma}}-f^\ast K_\Gamma$ is zero. Recall hereby that the \emph{canonical divisor} on a tropical curve $
\Gamma$ is given by
\begin{equation*}
   K_\Gamma=\sum_{p\in \Gamma}(2h(p)-2+\val(p))\cdot p \ .
\end{equation*}
We have $R_\pi=\sum_{\widetilde{p}\in\widetilde{\Gamma}}R_{\widetilde{p}}\cdot\widetilde{p}$ with 
\begin{equation*}
    R_{\widetilde{p}}(\pi):= d_{\widetilde{p}}(\pi)\big(2-2\cdot h(\pi(\widetilde{p}))\big)-\big(2-2\cdot \widetilde{h}(\widetilde{p})\big) -\sum_{\widetilde{v}\in T_{\widetilde{p}}\Gamma}\big(d_{\widetilde{v}}(\pi)-1\big),
\end{equation*}
 so the morphism is unramified if $R_{\widetilde{p}}(\pi)=0$ for all $\widetilde{p}\in\widetilde{\Gamma}$. From now on, we refer to an unramified harmonic morphism of degree $2$ as an \emph{unramified double cover}. 

\begin{remark}
The condition in Definition \ref{def_harmonicmorphism} says that $f$ pulls back harmonic functions on $\Gamma$ to harmonic functions on $\widetilde{\Gamma}$ (see \cite{MikhalkinZharkov,ABBRI} for more on this point of view). 
\end{remark}

Let $X$ be a smooth projective curve over a non-Archimedean field $K$. The non-Archimedean skeleton $\Gamma_X$ of the Berkovich space $X^{an}$ in the sense of \cite{Berkovich_book} naturally has the structure of a tropical curve. We write $\rho_X$ for the natural strong deformation retraction of $X^{an}$ onto $\Gamma_X$  and refer the interested reader to \cite[Section 4]{Berkovich_book} and \cite{BakerPayneRabinoff_nonArchcurves} for details on this construction.

Let $\pi\colon \widetilde{X}\rightarrow X$ be a finite morphism of smooth projective curves over $K$. By \cite[Theorem A]{ABBRI}, the restriction of $\pi^{an}\colon \widetilde{X}^{an}\rightarrow X^{an}$ to $\Gamma_{\widetilde{X}}\subseteq \widetilde{X}^{an}$ defines a finite harmonic morphism $\pi^{trop}\colon \Gamma_{\widetilde{X}}\rightarrow\Gamma_X$. If $\pi$ has degree $d$, so does $\pi^{trop}$, and if $\pi$ is unramified, so is $\pi^{trop}$. Alternatively, we may apply the valuative criterion of properness to the moduli space of admissible covers to find a simultaneous semistable reduction of $\pi\colon \widetilde{X}\rightarrow X$ over a finite extension of $K$. Then $\pi^{trop}$ is precisely the induced map on dual tropical curves (see \cite{CavalieriMarkwigRanganathan_admissiblecovers} for details). 

\subsection{Picard groups, Jacobian, and the Abel-Jacobi map}

Let $\Gamma$ be a tropical curve. A \emph{divisor} $D$ on $\Gamma$ is a finite formal sum $\sum_{i+1}^n a_i p_i$ over points $p_i\in\Gamma$ (with $a_i\in\Z$). We write $\deg D=\sum_{i=1}^n a_i$ for the degree of a divisor $D=\sum_{i=1}^n a_ip_i$ and $\Div_d(\Gamma)$ for the  divisors of degree $d$ on $\Gamma$. A \emph{rational function} on $\Gamma$ is a continuous piecewise-linear function $f\colon\Gamma\rightarrow\R$ with integer slopes. Write $\Rat(\Gamma)$ for the abelian group of rational functions on $\Gamma$. There is a homomorphism
\begin{equation*}\begin{split}
\div\colon \Rat(\Gamma)&\longrightarrow \Div_0(\Gamma),\\
f&\longmapsto \sum_{p\in\Gamma}\ord_p(f) \cdot p,
\end{split}\end{equation*}
where $\ord_p(f)$ denotes the sum of the outgoing slopes of $f$ at $p$. 
Divisors in the image of $\div$ are referred to as \emph{principal divisors} and denoted by $\PDiv(\Gamma)$. 
The \emph{Picard group} of $\Gamma$ is defined to be the quotient \begin{equation*}
  \Pic(\Gamma)=\Div(\Gamma)/PDiv(\Gamma) \ .
\end{equation*}
The degree function descends to $\Pic(\Gamma)$ and every $\Pic_d(\Gamma)$ is naturally a torsor over $\Pic_0(\Gamma)$.

 Choose a model $(G,\ell)$ of $\Gamma$. Let $M=H_1(\Gamma,\Z)$ and $M'=H^1(\Gamma,\Z)$, and consider the edge length pairing 
\begin{equation}\begin{split}\label{eq_edgelengthpairing}
    \langle .,.\rangle \colon M'\times M&\longrightarrow \R\\
    \Big(\sum_{e\in E(G)}a_e [e^\ast], \sum_{e\in E(G)}b_e [e]\Big)&\longmapsto \sum_{e\in E(G)} a_eb_e\ell(e) \ .
\end{split}\end{equation}
Set $N_\R=\Hom(M,\R)$ and $N_\R'=\Hom(M',\R)$. By the universal coefficient theorem, the natural map $\lambda_{\Theta}\colon M'\rightarrow M$ induced by 
\begin{equation*}
    \sum_{e\in E(G)}a_e[e]\longmapsto \sum_{e\in E(G)}a_e [e^\ast]
\end{equation*}
is an isomorphism and therefore defines a principal polarization.

\begin{definition}
The principally polarized tropical abelian variety
$\Alb(\Gamma)=N_\R/M'$ is called the \emph{Albanese variety} of $\Gamma$ and its dual $\Jac(\Gamma)=N_\R'/M$  the \emph{Jacobian variety} associated to $\Gamma$. 
\end{definition}

Notice that the definition of both $\Jac(\Gamma)$ and $\Alb(\Gamma)$ does not depend on the choice of the model $(G,\ell)$ and that, thanks to the principal polarization, the Jacobian and the Albanese torus are naturally isomorphic.  

A \emph{$1$-form} on $\Gamma$ is a formal $\R$-linear sum over elements $de$, as $e$ ranges over the edges of $G$, subject to the condition that $de=-de'$ whenever $e$ and $e'$ represent the same edge in opposite directions. 
A $1$-form $\omega=\sum\omega_e de$ is \emph{harmonic} if for every vertex $v$ of $G$, the sum $\sum \omega_e$ over all outgoing edges $e$ at $v$ is zero. The space $\Omega(\Gamma)$ of $1$-forms  is a real vector space that is naturally isomorphic to $H^1(\Gamma,\R)$ by \cite[Lemma 2.1]{BakerFaber_tropicalAbelJacobi}. Using this observation we have a natural isomorphism
\begin{equation*}
    \Jac(\Gamma)\simeq\Omega(\Gamma)^\ast/H_1(\Gamma) \,
\end{equation*}
where $\Omega(\Gamma)^\ast$ denotes the $\R$-linear dual of $\Omega(\Gamma)$ and where we send a cycle $[\gamma]$ in $H_1(\Gamma)$ to the $\R$-linear homomorphism 
\begin{equation*}
    \omega \longmapsto \int_\gamma \omega
\end{equation*}
in $\Omega(\Gamma)^\ast$. 

Fix a point $q$ in $\Gamma$. There is a tropical Abel-Jacobi map $\alpha_q\colon \Gamma\longrightarrow \Jac(\Gamma)$ constructed as follows: Let $p\in\Gamma$. Fix a path $\gamma$ connecting $q$ to $p$ and consider the homomorphism $\widetilde{\alpha}_{q}(p,\gamma)\in\Omega(X)^\ast$ given by 
\begin{equation*}
    \omega\longmapsto \int_\gamma\omega \ .
\end{equation*}
 If we had chosen a different path $\gamma'$ between $q$ and $p$, then the difference $\int_\gamma \omega -\int_{\gamma'}\omega$ is an element in $M=H_1(\Gamma)$ and the association $p\mapsto \widetilde{\alpha}_q(p,\gamma)$ descends to the \emph{Abel-Jacobi map}
\begin{equation*}
    \alpha_q\colon\Gamma\longrightarrow\Jac(\Gamma)=\Omega(\Gamma)^\ast/H_1(\Gamma) \ .
\end{equation*}
We refer the reader to \cite{MikhalkinZharkov, BakerFaber_tropicalAbelJacobi} for details on this construction.

The Abel-Jacobi map extends linearly to a homomorphism
$\alpha_{q,\ast}\colon\Div_0(\Gamma)\rightarrow \Jac(\Gamma)$ and the tropical analogue of the Abel-Jacobi-Theorem \cite[Theorem 6.2]{MikhalkinZharkov} states that $\alpha_{q,\ast}$ descends to a canonical isomorphism  $\Pic_0(\Gamma)\xrightarrow{\sim}\Jac(\Gamma)$. Under this isomorphism, the tropical Abel-Jacobi map is given by $p\mapsto[p-q]$ (also see \cite[Theorem 3.4]{BakerFaber_tropicalAbelJacobi}).

\subsection{A tropical norm map}\label{section_tropicalnormmapI}

In this section we introduce a tropical analogue of the norm map. We proceed in complete analogy with the algebraic situation, as e.g. explained in \cite[Appendix B.1]{ACGHI}. Let $\pi\colon\widetilde{\Gamma}\rightarrow\Gamma$ be a finite harmonic morphism. Choose a model for $\Gamma$ and $\widetilde{\Gamma}$ such that $\pi$ is linear on every edge.

\begin{lemma}\label{lem:slopes}
There is a unique $\R$-linear homomorphism 
\begin{equation*}
\Nm_\pi\colon \Rat(\widetilde{\Gamma})\longrightarrow \Rat(\Gamma)
\end{equation*}
such that 
\begin{enumerate}
    \item For $\widetilde{f}\in\Rat(\widetilde{\Gamma})$  the slope of $\Nm_\pi(\widetilde{f})$ at every edge $e$ of $G$ equals the sum of the slopes of $\widetilde{f}$ at all edges $\widetilde{e}$ of $\widetilde{G}$ that map to $e$, and
    \item $\Nm_\pi(1)=d$, where $1$ is the rational function whose values are constantly $1$.
\end{enumerate}

\end{lemma}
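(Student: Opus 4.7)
My plan is to prove the lemma in three parts: construction of a candidate, cycle-consistency, and uniqueness. First, for each $\widetilde f \in \Rat(\widetilde{\Gamma})$ and each oriented edge $e$ of the chosen model of $\Gamma$, I would define a candidate slope
\[
M_e(\widetilde f) := \sum_{\widetilde e \colon \pi(\widetilde e) = e} \widetilde m_{\widetilde e}(\widetilde f),
\]
where $\widetilde m_{\widetilde e}(\widetilde f)$ is the slope of $\widetilde f$ on $\widetilde e$ in the orientation matching $e$. These integers are linear in $\widetilde f$ and specify a candidate $\Nm_\pi(\widetilde f) \in \Rat(\Gamma)$ up to an additive constant; condition (1) is tautological, while condition (2) follows by applying $\R$-linearity to the constant $1$ (all slopes $M_e(1)$ vanish, so $\Nm_\pi(1)$ is constant, and the constant is fixed to be $d$).

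The heart of the existence argument is cycle-consistency: one must show that the prescribed slopes integrate to zero around every cycle $\gamma \in H_1(\Gamma,\Z)$, so that they truly arise from a rational function. My plan is to construct a $1$-cycle $\widetilde\gamma \in H_1(\widetilde{\Gamma},\R)$ lifting $\gamma$, with the coefficient of each preimage edge $\widetilde e \to e$ given by the coefficient of $e$ in $\gamma$ weighted by a suitable function of the dilation factor $d_{\widetilde e}(\pi)$. Verifying that $\partial \widetilde\gamma = 0$ would reduce precisely to the harmonic condition $\sum_{\widetilde v \to v} d_{\widetilde v}(\pi) = d_{\widetilde p}(\pi)$ being independent of $v \in T_{\pi(\widetilde p)}$ at each $\widetilde p \in \widetilde{\Gamma}$. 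Using $\ell(\widetilde e) = d_{\widetilde e}(\pi)\cdot \ell(\pi(\widetilde e))$ and rearranging summations, a direct bookkeeping then identifies $\int_\gamma \pi_*(d\widetilde f)$ with $\int_{\widetilde\gamma} d\widetilde f$, which vanishes because $d\widetilde f$ is exact on $\widetilde{\Gamma}$.

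For uniqueness, suppose $N_1, N_2\colon \Rat(\widetilde{\Gamma}) \to \Rat(\Gamma)$ both satisfy (1), (2), and are $\R$-linear. Their difference $D = N_1 - N_2$ is $\R$-linear, has zero slope on every edge of $\Gamma$, and sends $1 \mapsto 0$; so $D(\widetilde f)$ is a constant function of $\Gamma$ for every $\widetilde f$, depending $\R$-linearly on $\widetilde f$ and vanishing on the line $\R\cdot 1$. The same normalization used to extend the slope data of each $N_i$ consistently across $\Rat(\widetilde{\Gamma})$ then forces this constant to be identically zero. The main obstacle in the whole argument will be the cycle-consistency step: the naive unweighted lift of $\gamma$ fails to be closed as soon as $\pi$ has nontrivial dilation, so identifying the correct weighting that turns harmonicity into $\partial \widetilde\gamma = 0$ while also matching the edge-length pairing between $\widetilde{\Gamma}$ and $\Gamma$ is the delicate heart of the proof.
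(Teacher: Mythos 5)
Your existence argument is sound but follows a genuinely different route from the paper. The paper simply writes the norm down pointwise, $\Nm_\pi(\widetilde{f})(p)=\sum_{\pi(\widetilde{p})=p} d_{\widetilde{p}}(\pi)\,\widetilde{f}(\widetilde{p})$, observes that this is rational, $\R$-linear and sends $1\mapsto d$, and then verifies condition (1) by a short computation using $d_{\widetilde{e}}(\pi)\cdot\ell(\widetilde{e})=\ell(e)$. You instead prescribe the slopes $M_e(\widetilde{f})$ and prove integrability by cycle-consistency: for a cycle $\gamma=\sum_e a_e[e]$ the weighted lift $\widetilde{\gamma}=\sum_{\widetilde{e}} a_{\pi(\widetilde{e})}\,d_{\widetilde{e}}(\pi)[\widetilde{e}]$ is closed precisely because of harmonicity at every $\widetilde{p}$, and the relation $\ell(e)=d_{\widetilde{e}}(\pi)\ell(\widetilde{e})$ identifies $\sum_e a_e M_e(\widetilde{f})\ell(e)$ with $\int_{\widetilde{\gamma}}d\widetilde{f}=0$. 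This works (the weight really is $d_{\widetilde{e}}(\pi)$ itself), and it gives a conceptual explanation of why the pushed-forward slope data come from a function, at the price of bookkeeping that the explicit formula renders unnecessary.

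The genuine weak point is the treatment of the additive constants, and hence uniqueness. Your slope data determine $\Nm_\pi(\widetilde{f})$ only up to a constant, and conditions (1)--(2) together with $\R$-linearity do not pin those constants down: if $c\colon\Rat(\widetilde{\Gamma})\to\R$ is any linear functional with $c(1)=0$ (for instance $\widetilde{f}\mapsto\widetilde{f}(\widetilde{p}_0)-\widetilde{f}(\widetilde{p}_1)$), then adding $c(\widetilde{f})\cdot 1_\Gamma$ to one solution produces another map satisfying (1), (2) and linearity. Consequently the sentence ``the same normalization \dots forces this constant to be identically zero'' is circular --- nothing in (1)--(2) forces the two normalizations $N_1,N_2$ to agree, so your uniqueness argument does not close as written; a concrete choice of normalization has to be made. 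The paper's fiberwise formula is exactly such a canonical choice: it fixes $C(\widetilde{f})$ linearly with $C(1)=d$ and makes (1) and (2) immediate, and since $\div$ only sees slopes, the downstream statement $\pi_\ast\div(\widetilde{f})=\div(\Nm_\pi\widetilde{f})$ of Proposition \ref{prop_divnorm=normdiv} is insensitive to the constant anyway. To repair your write-up, either adopt that formula outright, or define your constant at a base point $q_0$ by $C(\widetilde{f})=\sum_{\pi(\widetilde{q})=q_0} d_{\widetilde{q}}(\pi)\widetilde{f}(\widetilde{q})$, which amounts to the same map.
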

We refer to $\Nm_\pi(\widetilde{f})$ as the \emph{norm} of $\widetilde{f}$.

\begin{proof}[Proof of Lemma \ref{lem:slopes}]
For $\widetilde{f}\in\Rat(\widetilde{\Gamma})$, define a function $f$ on $\Gamma$ by
\begin{equation*}
f(p) = \sum_{\substack{\widetilde{p}\in\widetilde{\Gamma}\\\pi(\widetilde{p})=p}} d_{\widetilde{p}}(\pi)\cdot\widetilde{f}(\widetilde{p}).
\end{equation*}
We have $f\in\Rat(\Gamma)$ since $\pi$ is a finite harmonic morphism. We set $\Nm_{\pi}(\widetilde{f})=f$ and note that $\Nm_\pi(\widetilde{f}+\widetilde{g})=\Nm_\pi(\widetilde{f})+\Nm_\pi(\widetilde{g})$ as well as $\Nm_\pi(c\cdot \widetilde{f})=c\cdot\Nm_\pi(\widetilde{f})$ and $\Nm_\pi(1)=d$. 


Let $e$ be an edge of $\Gamma$ connecting vertices $v$ and $w$. Let $\{\widetilde{e}\}$ be the collection of preimages of $e$ in $\tilde{\Gamma}$,  and denote $v_{\widetilde{e}}$ and $w_{\widetilde{e}}$ the endpoints of each  $\tilde{e}$  (where $\pi(v_{\widetilde{e}})=v, \pi(w_{\widetilde{e}})=w$). We need to show that $\sum_{\widetilde{e}}\text{slope}_{\widetilde{f}}(\widetilde{e}) =  \text{slope}_{f}(e)$.
Using $d_{\widetilde{e}}(\pi)\cdot \ell(\widetilde{e})=\ell(e)$, we find:
\begin{equation*}\begin{split}
    \sum_{\widetilde{e}}\text{slope}_{\widetilde{f}}(\widetilde{e}) &= \sum_{\widetilde{e}}\frac{\widetilde{f}(w_{\widetilde{e}})-\widetilde{f}(v_{\widetilde{e}})}{\ell(\widetilde{e})} \\
    &=\frac{1}{\ell(e)}\cdot \sum_{\widetilde{e}}d_{\widetilde{e}}(\pi)\big(\widetilde{f}(w_{\widetilde{e}})-\widetilde{f}(v_{\widetilde{e}})\big)  \\
    &=\frac{1}{\ell(e)}\cdot \Big( \sum_{w_{\widetilde{e}}} d_{w_{\widetilde{e}}}(\pi)\cdot \widetilde{f}(w_{\widetilde{e}}) - \sum_{v_{\widetilde{e}}} d_{w_{\widetilde{e}}}(\pi)\cdot \widetilde{f}(v_{\widetilde{e}}) \Big)\\
&=\frac{ f(w) - f(v)}{\ell(e)} = \text{slope}_{f}(e) \ .\\
\end{split}\end{equation*}
\end{proof}

The following Proposition \ref{prop_divnorm=normdiv} shows that the natural pushforward map $\Nm_\pi=\pi_\ast \colon\Div(\widetilde{\Gamma})\rightarrow\Div(\Gamma)$ 
is compatible with the norm map. 

\begin{proposition}\label{prop_divnorm=normdiv}
For $\widetilde{f}\in\Rat(\widetilde{\Gamma})$ we have:
\begin{equation*}
    \pi_\ast \big(\div(\widetilde{f})\big) =\div \big(\Nm_\pi(\widetilde{f})\big) \ .
\end{equation*}
\end{proposition}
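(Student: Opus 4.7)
The plan is to verify the equality of divisors pointwise by computing, for every $p \in \Gamma$, the coefficient of $p$ on both sides.

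First, I would choose a common refinement of the models for $\Gamma$ and $\widetilde{\Gamma}$ fine enough so that $\pi$ is linear on every edge and so that both $\widetilde{f}$ and $\Nm_\pi(\widetilde{f})$ are linear on every edge. (The second condition is automatic from the first once the first holds, since the slope of $\Nm_\pi(\widetilde f)$ on an edge $e$ is a fixed integer combination of the slopes of $\widetilde f$ on the preimages of $e$ by Lemma~\ref{lem:slopes}(1).) After this refinement, both divisors in the statement are supported on vertices, so it suffices to compare them at an arbitrary vertex $p$ of $\Gamma$.

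Unwinding the definition of the pushforward gives
\[
\bigl(\pi_\ast\div(\widetilde{f})\bigr)(p) \;=\; \sum_{\widetilde{p}\in\pi^{-1}(p)}\ord_{\widetilde{p}}(\widetilde{f}) \;=\; \sum_{\widetilde{p}\in\pi^{-1}(p)}\;\sum_{\widetilde{v}\in T_{\widetilde{p}}\widetilde{\Gamma}}\mathrm{slope}_{\widetilde{f}}(\widetilde{v}),
\]
while the coefficient on the other side equals $\ord_p(\Nm_\pi(\widetilde{f}))=\sum_{v\in T_p\Gamma}\mathrm{slope}_{\Nm_\pi(\widetilde{f})}(v)$. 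For each tangent direction $v\in T_p\Gamma$ along an edge $e_v$, I would apply Lemma~\ref{lem:slopes}(1) to rewrite the outgoing slope of $\Nm_\pi(\widetilde f)$ at $v$ as a sum of outgoing slopes of $\widetilde f$ over the edges $\widetilde e$ mapping to $e_v$.

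The key combinatorial observation is then the following bijection: oriented edges $\widetilde{e}$ of $\widetilde{G}$ lying over the oriented edge $e_v$ (emanating from $p$) correspond precisely to pairs $(\widetilde{p},\widetilde{v})$ with $\widetilde{p}\in\pi^{-1}(p)$, $\widetilde{v}\in T_{\widetilde{p}}\widetilde{\Gamma}$, and $d\pi(\widetilde{v})=v$. Substituting and swapping the order of summation reassembles
\[
\ord_p\bigl(\Nm_\pi(\widetilde{f})\bigr) \;=\; \sum_{\widetilde{p}\in\pi^{-1}(p)}\;\sum_{\widetilde{v}\in T_{\widetilde{p}}\widetilde{\Gamma}}\mathrm{slope}_{\widetilde{f}}(\widetilde{v}) \;=\; \bigl(\pi_\ast\div(\widetilde{f})\bigr)(p),
\]
as desired. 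There is no substantial obstacle; the only care needed is in matching orientations (outgoing tangent directions at $p$ with outgoing tangent directions at the preimages) so that property~(1) of Lemma~\ref{lem:slopes} is applied with consistent signs.
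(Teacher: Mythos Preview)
Your proof is correct and follows essentially the same approach as the paper: both arguments verify the equality pointwise at a vertex $p\in\Gamma$, invoke Lemma~\ref{lem:slopes}(1) to express the outgoing slopes of $\Nm_\pi(\widetilde{f})$ at $p$ as sums of slopes of $\widetilde{f}$ over preimage edges, and then regroup the double sum via the bijection between edges over $e$ and tangent directions at the fibre. Your version is slightly more explicit about the model refinement and the orientation bookkeeping, but the substance is the same.
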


\begin{proof}
Let $\widetilde{f}\in\Rat(\widetilde{\Gamma})$, and denote $f = \Nm_\pi(\widetilde{f})$.  Let $p$ be a point of $\Gamma$. 
From Lemma \ref{lem:slopes}, it follows that the slope of $\psi$ at every edge $e$ emanating from $p$ equals the  sum of the slopes at edges of $\widetilde{\Gamma}$ mapping down to $e$. From this we see that  
\begin{equation*}
\div(f)(p) = \sum_e \text{slope}_{f}(e) 
 =\sum_{\pi(\tilde{p})=p}  \sum_{\pi(\tilde{e})=e} \text{slope}_{\widetilde{f}}(\tilde{e}) 
=\sum_{\pi(\tilde{p})=p} (\div\widetilde{f})(\tilde{p}) = \pi_*\div(\widetilde{f}),
\end{equation*}
as claimed. 
\end{proof}

Proposition \ref{prop_divnorm=normdiv} implies that the pushforward map naturally descends to a homomorphism 
\begin{equation*}
    \Nm_\pi\colon \Pic(\widetilde{\Gamma})\longrightarrow\Pic(\Gamma) \ . 
\end{equation*}
which we call the \emph{norm map} associated to $\pi\colon\widetilde{\Gamma}\rightarrow\Gamma$. The norm respects degrees and we indiscriminately write $\Nm_\pi$ for the induced map $\Pic_d(\widetilde{\Gamma})\rightarrow\Pic_d(\Gamma)$ for all $d\geq 0$. 

We will now show that $\Nm_\pi\colon\Pic_0(\widetilde{\Gamma})\rightarrow\Pic_0(\Gamma)$  is an integral homomorphism of principally polarized tropical abelian varieties. Recall from \cite{BakerFaber_tropicalAbelJacobi} that there is a natural \emph{pullback homomorphism} $\pi^\ast\colon\Alb(\Gamma)\rightarrow \Alb(\widetilde{\Gamma})$ that is induced by the pullback of harmonic forms along $\pi$.

\begin{proposition}\label{prop_NormAbelJacobi}
Fix $\widetilde{q}\in\widetilde{\Gamma}$ and write $q=\pi(\widetilde{q})$. Then the diagram 
of Abel-Jacobi maps
\begin{equation}\label{eq_AbelJacobi=Norm}\begin{tikzcd}
    \widetilde{\Gamma}\arrow[d,"\pi"'] \arrow[rr,"\alpha_{\widetilde{q}}"]& & \Jac(\widetilde{\Gamma})\arrow[d,"(\pi^\ast)'"] \\
    \Gamma \arrow[rr,"\alpha_q"]&& \Jac(\Gamma)
\end{tikzcd}\end{equation}
is commutative.
\end{proposition}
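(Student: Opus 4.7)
The plan is to unravel both sides of the diagram in the Jacobian description $\Jac(\Gamma)\cong\Omega(\Gamma)^\ast/H_1(\Gamma)$ introduced just before the statement, and to verify commutativity via a change-of-variables formula for harmonic $1$-forms under $\pi$. Equivalently (and slightly more cleanly), after invoking the tropical Abel--Jacobi theorem identifying $\Jac(\Gamma)\simeq\Pic_0(\Gamma)$, the claim reduces to the identification $(\pi^\ast)'=\Nm_\pi$ together with the tautology $\pi_\ast(\widetilde{p}-\widetilde{q})=\pi(\widetilde{p})-q$.

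I would proceed as follows. First, unpack $(\pi^\ast)'$. By the Baker--Faber construction, $\pi^\ast\colon\Alb(\Gamma)\to\Alb(\widetilde{\Gamma})$ is induced by the pullback of harmonic $1$-forms $\pi^\ast\colon\Omega(\Gamma)\to\Omega(\widetilde{\Gamma})$, which on an edge $\widetilde{e}$ lying over $e$ acts by $\pi^\ast(\omega_e\, de)=d_{\widetilde{e}}(\pi)\,\omega_e\, d\widetilde{e}$ (with the sign determined by the orientation). Taking duals, $(\pi^\ast)'\colon\Omega(\widetilde{\Gamma})^\ast/H_1(\widetilde{\Gamma})\to\Omega(\Gamma)^\ast/H_1(\Gamma)$ is the descent of the linear dual $T\mapsto T\circ\pi^\ast$. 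This descent is well defined because if $\widetilde{\sigma}\in H_1(\widetilde{\Gamma})$, then $\int_{\widetilde{\sigma}}\pi^\ast\omega=\int_{\pi_\ast\widetilde{\sigma}}\omega$, and $\pi_\ast\widetilde{\sigma}\in H_1(\Gamma)$.

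Second, I will evaluate both compositions on a point $\widetilde{p}\in\widetilde{\Gamma}$. Choose any path $\widetilde{\gamma}$ in $\widetilde{\Gamma}$ from $\widetilde{q}$ to $\widetilde{p}$. Then $\alpha_{\widetilde{q}}(\widetilde{p})$ is represented by the functional $\omega'\mapsto\int_{\widetilde{\gamma}}\omega'$ on $\Omega(\widetilde{\Gamma})$, so $(\pi^\ast)'\bigl(\alpha_{\widetilde{q}}(\widetilde{p})\bigr)$ is the class in $\Omega(\Gamma)^\ast/H_1(\Gamma)$ of the functional $\omega\mapsto\int_{\widetilde{\gamma}}\pi^\ast\omega$. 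On the other hand, $\pi(\widetilde{\gamma})$ is a piecewise-linear path in $\Gamma$ from $\pi(\widetilde{q})=q$ to $\pi(\widetilde{p})$, so $\alpha_q\bigl(\pi(\widetilde{p})\bigr)$ is represented by $\omega\mapsto\int_{\pi(\widetilde{\gamma})}\omega$. Commutativity of \eqref{eq_AbelJacobi=Norm} is therefore reduced to the identity
\begin{equation*}
\int_{\widetilde{\gamma}}\pi^\ast\omega=\int_{\pi(\widetilde{\gamma})}\omega \qquad \text{for all } \omega\in\Omega(\Gamma).
\end{equation*}

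Third, I will verify this identity edge by edge. On an edge $\widetilde{e}$ of $\widetilde{\gamma}$ mapping onto an edge $e$ of $\Gamma$ with dilation factor $d_{\widetilde{e}}(\pi)$, the relation $d_{\widetilde{e}}(\pi)\cdot\ell(\widetilde{e})=\ell(e)$ together with the formula for $\pi^\ast\omega$ gives $\int_{\widetilde{e}}\pi^\ast\omega=\pm\,d_{\widetilde{e}}(\pi)\,\omega_e\,\ell(\widetilde{e})=\pm\,\omega_e\,\ell(e)=\int_{\pi(\widetilde{e})}\omega$, with signs matched to the orientations inherited from $\widetilde{\gamma}$. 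Summing over all segments of $\widetilde{\gamma}$ yields the desired identity. The main subtlety is the careful bookkeeping of orientations and dilation factors in this edge-wise change of variables; once this is in hand, the preceding two paragraphs assemble immediately into a proof of the proposition.
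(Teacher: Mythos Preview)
Your proof is correct and follows essentially the same approach as the paper: both reduce the commutativity to the change-of-variables identity $\int_{\widetilde{\gamma}}\pi^\ast\omega=\int_{\pi_\ast\widetilde{\gamma}}\omega$ and verify it edge by edge using $d_{\widetilde{e}}(\pi)\cdot\ell(\widetilde{e})=\ell(e)$. Your write-up is in fact slightly more thorough in justifying why the dual map $(\pi^\ast)'$ descends to the quotients, but the substance is identical.
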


For a finite morphism of algebraic curves the norm map is dual to the pullback homomorphism  (see e.g. \cite[Section 1]{Mumford_Prym}). The following Corollary \ref{cor_norm=dualtopullback} provides us with a tropical analogue of this observation.

\begin{corollary}\label{cor_norm=dualtopullback}
The tropical norm map $\Nm_\pi\colon\Jac(\widetilde{\Gamma})\rightarrow\Jac(\Gamma)$ is dual to the pullback homomorphism $\pi^\ast\colon\Alb(\Gamma)\rightarrow \Alb(\widetilde{\Gamma})$.
\end{corollary}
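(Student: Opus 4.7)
The plan is to deduce the corollary as a nearly immediate consequence of Proposition \ref{prop_NormAbelJacobi}, using the tropical Abel--Jacobi theorem to identify $\Jac(\widetilde{\Gamma})$ with $\Pic_0(\widetilde{\Gamma})$. Under this identification the Abel--Jacobi map $\alpha_{\widetilde{q}}$ sends $\widetilde{p}\mapsto [\widetilde{p}-\widetilde{q}]$ and every element of $\Jac(\widetilde{\Gamma})$ can be written as a finite $\Z$-linear combination of such classes.

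First, I would evaluate both candidate maps on the image of $\alpha_{\widetilde{q}}$. Since $\Nm_\pi$ is by construction induced by the set-theoretic pushforward of divisors $\pi_\ast$, for every $\widetilde{p}\in\widetilde{\Gamma}$ we have
\[
    \Nm_\pi\big(\alpha_{\widetilde{q}}(\widetilde{p})\big)=\big[\pi_\ast(\widetilde{p}-\widetilde{q})\big]=\big[\pi(\widetilde{p})-q\big]=\alpha_q\big(\pi(\widetilde{p})\big),
\]
so that $\Nm_\pi\circ \alpha_{\widetilde{q}}=\alpha_q\circ \pi$. On the other hand, Proposition \ref{prop_NormAbelJacobi} is precisely the statement that $(\pi^\ast)'\circ \alpha_{\widetilde{q}}=\alpha_q\circ \pi$. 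Thus $\Nm_\pi$ and $(\pi^\ast)'$ agree on $\alpha_{\widetilde{q}}(\widetilde{\Gamma})$.

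To conclude, I would invoke the tropical Abel--Jacobi theorem (\cite[Theorem 6.2]{MikhalkinZharkov}): every class in $\Pic_0(\widetilde{\Gamma})\cong \Jac(\widetilde{\Gamma})$ is represented by a degree-zero divisor $\sum n_i \widetilde{p}_i$, which, using $\sum n_i=0$, equals $\sum n_i(\widetilde{p}_i-\widetilde{q})=\sum n_i\,\alpha_{\widetilde{q}}(\widetilde{p}_i)$. Hence $\alpha_{\widetilde{q}}(\widetilde{\Gamma})$ generates $\Jac(\widetilde{\Gamma})$ as an abelian group. Since both $\Nm_\pi$ and $(\pi^\ast)'$ are homomorphisms of tropical abelian varieties agreeing on this generating set, they coincide. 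There is no real obstacle: all the work is already done in Proposition \ref{prop_NormAbelJacobi}, and the corollary is essentially a translation of that diagram from the ``geometric'' language of Abel--Jacobi to the ``algebraic'' language of $\Pic_0$.
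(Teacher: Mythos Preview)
Your proposal is correct and follows essentially the same route as the paper, which also derives the corollary directly from Proposition~\ref{prop_NormAbelJacobi} without further argument. One small point of phrasing: in your final step you call both maps ``homomorphisms of tropical abelian varieties,'' but for $\Nm_\pi$ this is part of what is being established (the paper remarks right after the corollary that it ``in particular shows that the norm map is an integral homomorphism of tropical abelian varieties''); it is cleaner to say that both are group homomorphisms agreeing on a generating set, which already forces equality.
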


Corollary \ref{cor_norm=dualtopullback} in particular shows that the norm map is an integral homomorphism of tropical abelian varieties. 

\begin{proof}[Proof of Proposition \ref{prop_NormAbelJacobi}] Let $\widetilde{p}\in\widetilde{\Gamma}$ and set $p=\pi(\widetilde{p})$. Choose models $\widetilde{G}$ and $G$ of both $\widetilde{\Gamma}$ and $\Gamma$ such that both $\widetilde{p}$ and $\widetilde{q}$ are vertices of $\widetilde{G}$ and $\pi$ is cellular. Let $\widetilde{\gamma}$ be a path connecting $\widetilde{q}$ to $\widetilde{p}$ and write 
\begin{equation*}
    [\widetilde{\gamma}] = \sum_{\widetilde{e}\in E (\widetilde{G})} c_{\widetilde{e}}[\widetilde{e}]
\end{equation*}
for its associated $1$-chain. Recall that $\pi^\ast\colon  \Omega(\Gamma)\rightarrow\Omega(\widetilde{\Gamma})$ is given by 
\begin{equation*}
    \sum_{e\in E(G)}\omega_e de \longmapsto \sum_{e\in E(G)}\sum_{\substack{\widetilde{e}\in E(\widetilde{G})\\ \pi(\widetilde{e})\subseteq e}}d_{\widetilde{e}}(f)\cdot \omega_e de \ .
\end{equation*}
For all $\omega=\sum_{e} \omega_ede\in\Omega(\Gamma)$ we verify 
\begin{equation*}
\begin{split}
    \int_{\widetilde{\gamma}} \pi^\ast(\omega) &=\sum_{e\in E(G)}\omega_e\cdot \Big(\sum_{\substack{\widetilde{e}\in E(\widetilde{G})\\ \pi(\widetilde{e})\subseteq e}} b_{\widetilde{e}} \cdot d_{\widetilde{e}}(f)\cdot \ell(\widetilde{e})\Big)\\
   &= \sum_{e\in E(G)}\omega_e \cdot \Big(\sum_{\substack{\widetilde{e}\in E(\widetilde{G})\\ \pi(\widetilde{e})\subseteq e}} b_{\widetilde{e}} \Big)\cdot \ell(e)= \int_{\pi_\ast[\widetilde{\gamma}]} \omega \ ,
\end{split}
\end{equation*}
using $d_{\widetilde{e}}\cdot \ell(\widetilde{e})=\ell(e)$ and this shows the commutativity of diagram \eqref{eq_AbelJacobi=Norm}. 
\end{proof}

\subsection{Tropical Prym varieties}
In this section we recall the definition of the tropical Prym variety $\Pr(\Gamma,\pi)$ from \cite[Section 6]{JensenLen_thetachars} and study its basic properties. We refer the reader to \cite[Appendix C]{ACGHI} for the classical analogue of this story. Fix an unramified double cover $\pi\colon\widetilde{\Gamma}\rightarrow\Gamma$. 

\begin{definition}
The \emph{dilation} cycle associated with the unramified double cover $\pi$ is the collection of points $p\in\Gamma$ whose preimage in $\widetilde\Gamma$ consists of a single point $\widetilde{p}$ with $d_{\widetilde{p}}(\pi)= 2$.
\end{definition}

The following Lemma generalizes \cite[Corollary 5.5]{JensenLen_thetachars}.

\begin{lemma}\label{lem:dilationCycle}
The dilation cycle is a union of cycles and isolated points. 
\end{lemma}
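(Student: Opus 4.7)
The plan is to show that at every point $p$ of the dilation cycle, the number of branches of the dilation cycle emanating from $p$ is even. A finite graph in which every vertex has even valence decomposes as an edge-disjoint union of cycles together with isolated vertices, which is exactly the decomposition claimed.

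I would begin by setting up the local combinatorics at a point $p$ of the dilation cycle. By assumption $\pi^{-1}(p)=\{\widetilde p\}$ and $d_{\widetilde p}(\pi)=2$. Harmonicity of $\pi$ at $\widetilde p$ means that for every $v\in T_p\Gamma$ one has $\sum_{\widetilde v\mapsto v}d_{\widetilde v}(\pi)=d_{\widetilde p}(\pi)=2$, so each tangent direction $v$ is either \emph{dilated} (a unique preimage $\widetilde v$ with $d_{\widetilde v}(\pi)=2$) or \emph{\'etale} (two preimages, each of local degree $1$). Since local degrees are constant along open edges in a suitable model, a point on the open edge emanating from $p$ in direction $v$ lies in the dilation cycle if and only if $v$ is dilated. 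Letting $k$ denote the number of dilated directions at $p$ and $m$ the number of \'etale directions, we have $\val(p)=k+m$, $\val(\widetilde p)=k+2m$, and $k$ is exactly the number of branches of the dilation cycle through $p$.

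Next I would apply the unramified hypothesis $R_{\widetilde p}(\pi)=0$. Observing that $\sum_{\widetilde v\in T_{\widetilde p}\widetilde\Gamma}(d_{\widetilde v}(\pi)-1)=k$ and substituting $d_{\widetilde p}(\pi)=2$ into the ramification formula yields
$$0\;=\;2\bigl(2-2h(p)\bigr)-\bigl(2-2\widetilde h(\widetilde p)\bigr)-k\;=\;2+2\widetilde h(\widetilde p)-4h(p)-k,$$
hence $k=2+2\widetilde h(\widetilde p)-4h(p)$, which is a non-negative even integer. Consequently every vertex of the dilation cycle, in any sufficiently fine model of $\Gamma$, has even valence within it. A standard Euler-type argument then shows that any finite graph in which every vertex has even valence is an edge-disjoint union of cycles together with its isolated vertices. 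Applied to the dilation cycle this gives the decomposition asserted by the lemma.

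I expect the main obstacle to be the local bookkeeping: one must match the harmonic condition to the dilated/\'etale classification of tangent directions and then verify that the ramification formula delivers the right parity. Once this local picture is in place, the topological decomposition is classical, and no global argument is needed.
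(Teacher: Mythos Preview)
Your proof is correct and follows essentially the same approach as the paper: both compute the unramified condition $R_{\widetilde p}(\pi)=0$ at a point of the dilation cycle to deduce that the number of dilated tangent directions is the even integer $2+2\widetilde h(\widetilde p)-4h(p)$, and then conclude that the dilation cycle is a union of cycles and isolated points. You are slightly more explicit than the paper in spelling out the dilated/\'etale dichotomy for tangent directions and in invoking the Euler-type decomposition of an even-valence graph, but the argument is the same.
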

\begin{proof}
Let $\widetilde{p}\in\widetilde\Gamma$ be such that $p=\pi(\widetilde{p})$ is in the dilation cycle, namely  $d_{\widetilde{p}} = 2$. 
Since $\pi$ is unramified, we find that 
\[
2h(\widetilde{p})-2=2\cdot (2h(p)-2)+\ell\ ,
\]
where $\ell$ denotes the number of edges emanating from $p$ that are in the dilation cycle.
Since both $h(p)$ and $h(\widetilde{p})$ are integers, $\ell$ must be an even number. Whenever $\ell>0$, the point $p$ is part of a cycle, and otherwise  $p$ is an isolated point. 
\end{proof}

Note that isolated points may only occur  where the weight is positive.

\begin{example}\label{ex:unramified_cover}
Suppose that $\widetilde{\Gamma}$ consists of a single vertex with weight $1$ and two loops of length $1$, and $\Gamma$ consists of a single vertex of weight $0$, and two loops of length $2$. Then the map $\pi:\widetilde\Gamma\to\Gamma$ which sends vertex to vertex and dilates the edges by a factor of $2$ is an unramified double cover. In this case, the dilation cycle is the entire graph.

Now, assume that $\Gamma$ consists of a vertex $p$ of weight $1$ and a loop connected by an edge. Assume that $\widetilde{\Gamma}$ consists of a point $\widetilde{p}$ of weight one connected by edges to two loops. Then the dilation cycle consists only of the weighted point $p$.
\end{example}

\begin{lemma}\label{lemma_spanningtrees}

Let $\pi\colon\widetilde{\Gamma}\rightarrow\Gamma$ be an unramified double cover. There are bases 
\begin{equation*}
    \epsilon_{1},\ldots, \epsilon_{a},\epsilon_{a+1},\ldots,\epsilon_{a+b},\epsilon_{a+b+1},\ldots, \epsilon_{a+b+c}
\end{equation*}
of $H_1(\Gamma)$ and
\begin{equation*}
    \widetilde{\epsilon}^{\pm}_{1},\ldots, \widetilde{\epsilon}^{\pm}_{a},\widetilde{\epsilon}_{a+1},\ldots,\widetilde{\epsilon}_{a+b}, \widetilde{\epsilon}_{a+b+1},\ldots, \widetilde{\epsilon}_{a+b+c}, \ldots, \widetilde{\epsilon}_{a+b+c+1},\ldots, \widetilde{\epsilon}_{a+b+c+d}
\end{equation*}
of $H_1(\widetilde{\Gamma})$ such that 
\begin{equation*}
    \left\{\begin{array}{ll}
        \pi_\ast(\widetilde{\epsilon}_i^{\pm})=\epsilon_i & \textrm{ for } i=1,\ldots, a\\
        \pi_\ast(\widetilde{\epsilon}_i)=2\epsilon_i & \textrm{ for } i=a+1,\ldots, a+b\\
        \pi_\ast(\widetilde{\epsilon}_i)=\epsilon_i & \textrm{ for } i=a+b+1,\ldots, a+b+c\\
        \pi_\ast(\widetilde{\epsilon}_i)=0 & \textrm{ for } i=a+b+1,\ldots, a+b+c+d 
    \end{array}\right .
\end{equation*}





\end{lemma}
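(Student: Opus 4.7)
The plan is to construct the required bases by first isolating the dilation cycles of $\Gamma$, then adapting a spanning tree of $\Gamma$ to the dilation structure, and finally performing integral basis changes to obtain the prescribed pushforward relations.

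By Lemma~\ref{lem:dilationCycle}, the dilation locus in $\Gamma$ decomposes into disjoint cycles $C_1,\ldots,C_b$ together with isolated weighted points (which do not contribute to $H_1$). Each $C_j$ has a unique preimage $\widetilde{C}_j\subseteq\widetilde{\Gamma}$ homeomorphic to $C_j$ via $\pi$, with every edge dilated by a factor of two; moreover each $\widetilde{C}_j$ is pointwise fixed by the covering involution $\iota\colon\widetilde{\Gamma}\to\widetilde{\Gamma}$. Setting $\epsilon_{a+j}:=[C_j]$ and $\widetilde{\epsilon}_{a+j}:=[\widetilde{C}_j]$ immediately produces the $b$ middle basis elements with $\pi_{\ast}(\widetilde{\epsilon}_{a+j})=2\epsilon_{a+j}$.

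Next, I would fix a spanning tree $T$ of $\Gamma$ containing all but one edge of each dilation cycle $C_j$. Because each path $T\cap C_j$ lies in the $\iota$-fixed set $\widetilde{C}_j$, the preimage $\pi^{-1}(T)$ is a connected subgraph of $\widetilde{\Gamma}$ whose cycles are exactly the $\widetilde{C}_j$. I would then complete $\{[C_j]\}$ to a basis of $H_1(\Gamma)$ by adjoining the $b_1(\Gamma)-b$ basic cycles $\epsilon_1,\ldots,\epsilon_a,\epsilon_{a+b+1},\ldots,\epsilon_{a+b+c}$ associated to the non-tree edges of $\Gamma$ outside the dilation cycles. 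These split into two types according to the behaviour of their lifts. A cycle $\epsilon_i$ whose lift in $\widetilde{\Gamma}$ starting on one sheet closes up on the same sheet has two disjoint lifts $\widetilde{\epsilon}_i^{\pm}$, each pushing forward to $\epsilon_i$; this produces the $a$ type-$a$ pairs. A cycle whose lift crosses to the other sheet can only be closed up in $\widetilde{\Gamma}$ by routing through one of the dilation cycles $\widetilde{C}_j$, exploiting pointwise $\iota$-fixedness. The resulting cycle $\widetilde{\epsilon}_i$ satisfies $\pi_\ast(\widetilde{\epsilon}_i)=\epsilon_i$ because the ``dilation detour'' enters and leaves $\widetilde{C}_j$ at the same point and contributes zero to the pushforward. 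These give the $c$ type-$c$ basis elements.

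Finally, the remaining $d:=b_1(\widetilde{\Gamma})-2a-b-c$ basis elements of $H_1(\widetilde{\Gamma})$ lie in $\ker(\pi_\ast)$ and are produced as integer combinations such as $\widetilde{\epsilon}_i^+-\widetilde{\epsilon}_i^-$ (for suitable type-$a$ indices) and $\widetilde{C}_j-2\widetilde{\epsilon}_i$ (whenever a type-$c$ cycle interacts with a dilation cycle); the total count agrees with Riemann--Hurwitz for unramified double covers, $\widetilde{g}-1=2(g-1)$. The main obstacle is keeping every basis change over $\ZZ$ and carefully handling the interplay between dilation cycles and cycles with nontrivial monodromy: in the presence of a dilation cycle, a monodromy-nontrivial cycle downstairs no longer produces a single ``winding'' lift (which would land in case $b$) but rather splits into a type-$c$ cycle together with a type-$d$ remainder. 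Constructing every basis element as an honest integer $1$-chain via the chosen spanning tree ensures integrality throughout.
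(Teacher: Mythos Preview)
Your proposal has a genuine conceptual error in the very first step. You claim that for a dilation cycle $C_j$ with preimage $\widetilde{C}_j$ one has $\pi_\ast[\widetilde{C}_j]=2[C_j]$, and you use this to populate the type-$b$ block. This is false: the dilation factor $d_{\widetilde{e}}(\pi)=2$ encodes a metric stretching (so that $\ell(\widetilde{e})=\tfrac{1}{2}\ell(e)$), not a topological multiplicity. On the level of singular $1$-chains the map $\widetilde{C}_j\to C_j$ is a homeomorphism, so $\pi_\ast[\widetilde{C}_j]=[C_j]$. In the paper's proof this is stated explicitly: the basis elements coming from the dilation subgraph land in the type-$c$ block, not the type-$b$ block. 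The genuine type-$b$ elements arise only from cycles in $\Gamma$ with nontrivial monodromy whose lift is forced to traverse both sheets (and hence pushes forward with multiplicity $2$); in the purely topological case this gives $b=1$, whereas your construction would give $b=0$ there and misses that element entirely.

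There are two further structural problems. First, Lemma~\ref{lem:dilationCycle} only says the dilation locus is an even-valence subgraph (plus isolated weighted points); it need not be a disjoint union of circles $C_1,\dots,C_b$, so you cannot simply read off a basis from such a decomposition. The paper instead treats the dilation locus $\gamma$ as a subgraph in its own right, takes a basis of $H_1(\gamma)$, and then handles the components of $\Gamma\setminus\gamma$ and the attaching cycles separately via an auxiliary contracted bipartite graph $\Gamma_0$. Second, your final count invokes Riemann--Hurwitz in the form $\widetilde{g}-1=2(g-1)$, but the lemma concerns the first Betti numbers of the underlying (unweighted) graphs, for which this identity fails in the presence of dilation (cf.\ the first map in Example~\ref{ex:unramified_cover}, where both Betti numbers are $2$). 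The global spanning-tree strategy you outline is reasonable in spirit, but without correcting the pushforward computation and treating the dilation locus as a general subgraph, the argument does not go through.
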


\begin{proof} 

Let us first consider the case that the dilation cycle is empty, so that $\pi$ is a topological cover. Fix an orientation for $\Gamma$ and a corresponding orientation for $\widetilde\Gamma$. Let $T$ be any spanning tree  of $\Gamma$. Then $\widetilde{\Gamma}$ may be described as follows: take two copies $T_1$ and $T_2$ of $T$ and, for every edge $e$ in the complement of $T$ we have two choices for lifts in the complement of $T_1\cup T_2$.
We want to impose that $\widetilde{\Gamma}$ is connected. So we need at least one connection $\widetilde{e}_g^+$ between $T_1$ and $T_2$. A spanning tree for $\widetilde{\Gamma}$ is given by $\widetilde{T}=T_1\cup T_2\cup \widetilde{e}_g^+$. We  write $\widetilde{e}_1^\pm, \widetilde{e}_2^\pm, \ldots, \widetilde{e}_{g-1}^\pm,\widetilde{e}_g^-$ for the edges in the complement of $\widetilde{T}$, such that  $e_i=\pi(\widetilde{e}_i^\pm)$ for each $i$. 
For each $i$, let $\widetilde\delta_i^\pm$ be the smallest cycle supported on $\widetilde T\cup  \widetilde e_i^{\pm}$. The cycles $\widetilde\delta_1^\pm,\ldots\widetilde\delta_{g-1}^\pm,\widetilde\delta_g^-$ form a basis for $H_1(\widetilde\Gamma)$ since they were obtained from the complement of a spanning tree. The desired basis is now given by taking 
\[
\widetilde\epsilon_i^+ = \widetilde\delta_i^+
\]
and
\[
\widetilde\epsilon_i^- = -\widetilde\delta_i^+ + \widetilde\delta_g^-
\]
for $i=1,\ldots, g-1$, as well as 
\begin{equation*}
    \epsilon_g=\widetilde{e}_g^{-}.
\end{equation*}
One easily checks that $\pi_*(\epsilon_i^+) = \pi_*(\epsilon_i^-)$ for $i=1,2,\ldots, g-1$, and  $\pi_\ast (\widetilde{\epsilon}_g)=2\epsilon_g$. So in this case we have $a=g-1$ and $b=1$. 

Note that, if we had considered a disconnected topological double cover, the same argument  would obtain bases $\epsilon_1,\ldots, \epsilon_g$ of $H_1(\Gamma)$ and $\widetilde{\epsilon}^{\pm}_1,\ldots, \widetilde{\epsilon}_g^{\pm}$ of $H_1(\widetilde{\Gamma})$ with $\pi_\ast(\widetilde{\epsilon}_i^{\pm})=\epsilon_i$ for all $i=1,\ldots, g$, so in that case $a=g$ and $b=0$.

Suppose now that the dilation cycle $\gamma$ in non-empty. Then a basis $\epsilon_i$ of cycles in $\gamma$ gives rise to a basis $\widetilde{\epsilon}_i$ of its preimage $\widetilde{\gamma}=\pi^{-1}(\gamma)$ in $\widetilde{\Gamma}$ with $\pi_\ast(\widetilde{\epsilon}_i)=\epsilon_i$.
Let $\Gamma_1,\ldots,\Gamma_\ell$ be the connected components of $\Gamma\setminus\gamma$ and  $\widetilde\Gamma_1,\ldots,\widetilde\Gamma_\ell$  their preimages in $\widetilde\Gamma$. Write $\widetilde{\Gamma}_i=\widetilde{\Gamma}_i^+\cup\widetilde{\Gamma}_i^-$ for the two components of $\widetilde{\Gamma}_i$ whenever it is disconnected. By the first part of the proof we find basis vectors for each of the covers $\widetilde{\Gamma}_i\rightarrow \Gamma_i$.

In order to describe the additional cycles formed by attaching all the different components,  let us consider the tropical curve $\Gamma_0$ obtained by contracting each $\Gamma_i$ and each connected component of $\gamma$ to a point respectively (keeping edges that connect to the rest of the graph). Denote by $\widetilde{\Gamma}_0$ the tropical curve obtained by simultaneously contracting all components of the $\widetilde{\Gamma_i}$ and of $\pi^{-1}(\gamma)$. 
Note that the minimal underlying graphs of $\Gamma_0$ and $\widetilde{\Gamma}_0$ are bipartite: an edge in $\Gamma_0$  always connects a vertex $v_i$ arising from some component $\Gamma_i$ to a vertex $w$ arising from a component of  $\gamma$. Similarly in $\widetilde{\Gamma}_0$, edges connect vertices arising from some $\widetilde{\Gamma}_i$  to vertices arising  from $\widetilde{\gamma}$. The morphism $\pi$ induces an unramified harmonic morphism $\pi_0\colon \widetilde{\Gamma}_0\rightarrow \Gamma_0$. 

Now consider a vertex $v_i$ in $\Gamma_0$ originating from a component $\Gamma_i$. The star of $v_i$, denoted $S(v_i)$, is a bouquet of banana graphs. 
When the preimage of $v_i$ in $\widetilde{\Gamma}_0$ consists of a single vertex $\widetilde{v}_i$, then $S(\widetilde v_i)$ is obtained by doubling each edge of $S(v_i)$. When the preimage of $v_i$ consists of two vertices $\widetilde{v}_i^\pm$, the preimage of $S(v_i)$  consists of two copies of $S(v_i)$ glued at their ends. Either way, denoting $r_i$ and $\widetilde r_i$ the first Betti numbers of $S(v_i)$ and $S(\widetilde v_i)$ it is straightforward to construct bases $\theta_1,\theta_2,\ldots,\theta_{r_i}$ and $\widetilde\theta_1,\widetilde\theta_2,\ldots,\widetilde\theta_{r_i}$ of their homologies, such that 
\begin{equation*}
    \pi_{0,\ast}(\widetilde{\theta}_k)=\begin{cases}
                      \theta_i & \textrm{ if } 1\leq k\leq r_i\\
                      0 &\textrm{ if } r_i+1\leq k\leq \widetilde r_i \ .
    \end{cases}
\end{equation*}
These cycles lift to cycles $\epsilon_1,\epsilon_2,\ldots,\epsilon_{r_i}$ and  $\widetilde\epsilon_1,\widetilde\epsilon_2,\ldots,\widetilde\epsilon_{\widetilde r_i}$ such that 
\begin{equation*}
    \pi_{0,\ast}(\widetilde{\epsilon}_k)=\begin{cases}
                      \epsilon_i & \textrm{ if } 1\leq k\leq r_i\\
                      0 &\textrm{ if } r_i+1\leq k\leq \widetilde r_i \ .
    \end{cases}
\end{equation*}



Finally, consider the graphs $\Sigma$ and $\widetilde\Sigma$ obtained from $\Gamma_0$ and $\widetilde\Gamma_0$ by identifying all the edges in each banana graph to a single edge. Every simple cycle  $\theta$ in $\Sigma$ may be lifted to a simple cycle $\widetilde{\theta}$ in $\widetilde\Sigma$. Now, $\theta$ and $\widetilde{\theta}$ lift to cycles $\widetilde\epsilon$ and $\epsilon$  such that $\pi_\ast(\widetilde\epsilon) = \epsilon$.

Combining all of these choices we find bases of $H_1(\Gamma)$ and of $H_1(\widetilde{\Gamma})$ with the desired properties. 

\end{proof}

\begin{definition}
Let $\pi:\widetilde\Gamma\to\Gamma$ be an unramified double cover.   The \emph{Prym variety} $\Pr(\Gamma,\pi)$ associated to $\pi$ is the connected component of $\ker(\Nm_\pi)$ containing $0$. 
\end{definition}

By Proposition \ref{prop_tropker&coker} the Prym variety naturally has the structure of tropical abelian variety (with the polarization induced from $\Pic_0(\widetilde{\Gamma})$). 

\begin{remark}
Our definition differs from the one given in \cite{JensenLen_thetachars} when the metric graphs are augmented, since we do not add virtual loops in place of vertex-weights. As a result, the dimension of the Prym variety in our case is smaller when there are non-trivial weights. 
\end{remark}


 Let $\pi\colon \widetilde{X}\rightarrow X$ be an unramifed double cover. Then $\ker(\Nm_\pi)$ has two components and, by \cite{Mumford_Prym}, there is a principal polarization $\Xi$ on $\Pr(X,\pi)$ such that 
\begin{equation*}
    i^\ast \widetilde{\Theta} = 2\cdot \Xi \ ,
\end{equation*}
where $i^\ast\widetilde{\Theta}$ denotes the induced polarization from the Theta-divisor on $\Jac(\widetilde{X})$. The following Theorem \ref{thm_Prymvariety} is  a tropical analogue of this result and expands on  \cite[Proposition 6.1]{JensenLen_thetachars}. 

Let $\pi:\widetilde\Gamma\to\Gamma$ be an unramified double cover. Denote by $\Gamma^0$ and $\widetilde{\Gamma}^0$ the un-augmented graphs obtained from $\Gamma$ and $\widetilde{\Gamma}$ respectively by forgetting the vertex weights. Let $g_0$ and $h_0$ be the genera of $\Gamma^0$ and $\widetilde\Gamma^0$ (i.e. their first Betti numbers).

\begin{theorem}\label{thm_Prymvariety}
Let $\pi:\widetilde\Gamma\to\Gamma$ be an unramified double cover. Then $\ker(\Nm_\pi)$ is a union of real tori of dimension $h_0-g_0$ in $\Jac(\Gamma)$. \begin{enumerate}[(i)]
\item When the dilation cycle of $\pi$ is trivial then $\ker(\Nm_\pi)$ has two connected components. Moreover, there is a principal polarization $\lambda_{\Xi}$ on $\Pr(\Gamma,\pi)$ such that 
    \begin{equation}\label{eq_2times}
        i^\ast\lambda_{\Theta}=2\cdot\lambda_\Xi \ .
    \end{equation}
\item When the dilation cycle of $\pi$ is non-trivial, then $\ker(\Nm_\pi)$ is connected and there is a natural principal polarization $\lambda_\Xi$ on $\Pr(\Gamma,\pi)$.
\end{enumerate}
\end{theorem}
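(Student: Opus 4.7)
Throughout I work with the adapted bases of $H_1(\Gamma,\Z)$ and $H_1(\widetilde\Gamma,\Z)$ provided by Lemma \ref{lemma_spanningtrees}, combined with Proposition \ref{prop_tropker&coker}, which identifies $\Pr(\Gamma,\pi)$ with the real torus $\ker(\pi_*)_\R/\ker(\pi_*)_\Z$ inside $\Jac(\widetilde\Gamma)$. Reading off the kernel of $\pi_*$ in the adapted basis, it is spanned over $\Z$ by the differences $\delta_i := \widetilde\epsilon_i^+ - \widetilde\epsilon_i^-$ for $i=1,\dots,a$ together with the kernel cycles $\widetilde\epsilon_\ell$ for $\ell=a+b+c+1,\dots,a+b+c+d$, so $\dim \Pr(\Gamma,\pi) = a+d$. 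Combining the identities $g_0 = a+b+c$ and $h_0 = 2a+b+c+d$ from the lemma gives $a+d = h_0 - g_0$, establishing the dimension count in both cases.

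Since $\pi_*$ is surjective on $H_1(-,\R)$, a short diagram chase identifies the component group
\[
\pi_0\bigl(\ker(\Nm_\pi)\bigr) \;\cong\; \coker\bigl(\pi_*\colon H_1(\widetilde\Gamma,\Z)\to H_1(\Gamma,\Z)\bigr).
\]
In case (i), where $\pi$ is a connected topological double cover, the map $\pi_*$ is dual to the surjective monodromy character $\rho\colon H_1(\Gamma,\Z)\twoheadrightarrow\Z/2\Z$ classifying the cover, and Lemma \ref{lemma_spanningtrees} furnishes exactly one $b$-type basis element $\widetilde\epsilon_g\mapsto 2\epsilon_g$; hence the cokernel is $\Z/2\Z$ and $\ker(\Nm_\pi)$ has two components. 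In case (ii), I argue that $\pi_*$ is already surjective integrally: any cycle $\epsilon\in H_1(\Gamma,\Z)$ can be represented by a loop based at some point $p_0\in\gamma$, and since $\pi^{-1}(p_0)$ consists of the single point $\widetilde p_0$, the lift of this loop starting at $\widetilde p_0$ is forced to close up in $\widetilde\Gamma$, yielding $\widetilde\epsilon\in H_1(\widetilde\Gamma,\Z)$ with $\pi_*(\widetilde\epsilon)=\epsilon$. Thus the cokernel vanishes and $\ker(\Nm_\pi)$ is connected.

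For the polarization I would restrict the edge-length pairing to $M^K := \ker(\pi_*)_\Z$ using the adapted basis. Writing $\epsilon_i = \sum_e a_e^{(i)} e$ and using that on the topological-cover portion both lifts $\widetilde e^\pm$ of an edge $e$ satisfy $\widetilde\ell(\widetilde e^\pm) = \ell(e)$, the computation
\[
\bigl\langle \delta_i, \delta_j\bigr\rangle_{\widetilde\Gamma} \;=\; \sum_e a_e^{(i)} a_e^{(j)} \bigl(\widetilde\ell(\widetilde e^+) + \widetilde\ell(\widetilde e^-)\bigr) \;=\; 2\bigl\langle \epsilon_i, \epsilon_j\bigr\rangle_{\Gamma}
\]
shows, in case (i), that $i^*\lambda_{\widetilde\Theta}$ factors as twice a polarization $\lambda_\Xi$ whose Gram matrix on $(\delta_i)$ is $\bigl(\langle\epsilon_i,\epsilon_j\rangle_\Gamma\bigr)$. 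Principality of $\lambda_\Xi$ follows by identifying $M^K$ via $\delta_i\mapsto\epsilon_i$ with the index-two sublattice $\ker(\rho)\subseteq H_1(\Gamma,\Z)$ and matching the saturated cokernel $M^C := \coker(\pi^*)^{\mathrm{sat}}$ with its $\Z$-dual via the edge-length pairing. An analogous calculation in case (ii) incorporates the additional kernel cycles $\widetilde\epsilon_\ell$ originating from the dilation locus, and again produces a principal polarization $\lambda_\Xi$ on $\Pr(\Gamma,\pi)$.

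The main technical obstacle is verifying principality of $\lambda_\Xi$ as a map of lattices and not merely as a rational isogeny. Concretely, one must check that the induced polarization $i^*\lambda_{\widetilde\Theta}\colon M^K\to M^C$ factors as $2$ times an isomorphism, which amounts to a Smith normal form computation for $\pi^*\colon H^1(\Gamma,\Z)\to H^1(\widetilde\Gamma,\Z)$ and a careful identification of $M^C$ with the dual of $M^K$. The adapted bases of Lemma \ref{lemma_spanningtrees} reduce this to bookkeeping, but in case (ii) one must handle the interaction of $a$-, $c$-, and $d$-type contributions coming from the dilation structure.
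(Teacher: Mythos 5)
Your dimension count and your treatment of the component group are fine; in fact your case (ii) argument --- lifting a closed walk based at a dilated point $p_0$ to its unique preimage $\widetilde p_0$ and concluding that $\pi_\ast\colon H_1(\widetilde\Gamma,\Z)\to H_1(\Gamma,\Z)$ is integrally surjective --- is more self-contained than the paper, which simply defers the component count to \cite[Proposition 6.1]{JensenLen_thetachars}. The genuine gap is in the polarization step, which is the actual content of the theorem. The identity $\langle\delta_i,\delta_j\rangle_{\widetilde\Gamma}=2\langle\epsilon_i,\epsilon_j\rangle_\Gamma$ is false for $i\neq j$: writing $\delta_i=(1-\sigma)\widetilde\epsilon_i^+$ for the deck involution $\sigma$ and using the projection formula $\langle x,\pi^\ast z\rangle_{\widetilde\Gamma}=\langle\pi_\ast x,z\rangle_\Gamma$ (valid in case (i)), one finds $\langle\delta_i,\delta_j\rangle_{\widetilde\Gamma}=4\langle\widetilde\epsilon_i^+,\widetilde\epsilon_j^+\rangle_{\widetilde\Gamma}-2\langle\epsilon_i,\epsilon_j\rangle_\Gamma$, which depends on whether the chosen lifts run through the same or opposite sheets over the edges shared by $\epsilon_i$ and $\epsilon_j$. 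Concretely, let $\Gamma$ have vertices $u,v,w,z$, single edges $e$ from $u$ to $v$ and $f$ from $w$ to $z$, and parallel pairs $p_1,p_2$ from $v$ to $w$ and $q_1,q_2$ from $z$ to $u$, and take the free double cover whose monodromy is supported on $p_2$ and $q_2$. Then $\epsilon_1=e+p_1+f+q_1$ and $\epsilon_2=e+p_2+f+q_2$ both lift, $\langle\epsilon_1,\epsilon_2\rangle_\Gamma=\ell(e)+\ell(f)$, but $\langle\delta_1,\delta_2\rangle_{\widetilde\Gamma}=2\ell(e)-2\ell(f)$. So the restricted pairing is not twice the Gram matrix of $\Gamma$, and the proposed identification of $M^K$ with $\ker(\rho)\subseteq H_1(\Gamma,\Z)$ cannot hold in any case for rank reasons: $\ker(\rho)$ has rank $g_0$, while $M^K$ has rank $h_0-g_0=g_0-1$ in case (i).

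As a consequence, neither the factorization $i^\ast\lambda_{\widetilde\Theta}=2\lambda_\Xi$ nor, more importantly, the principality of $\lambda_\Xi$ is actually established by your computation; you flag principality yourself as the main obstacle and leave it as an unspecified Smith-normal-form ``bookkeeping'', and in case (ii) the ``analogous calculation'' is not carried out, which is exactly where the $b$-, $c$- and $d$-type contributions interact and where \eqref{eq_2times} fails. The paper's proof does not compare Gram matrices with $\Gamma$ at all; it works on the dual side: in the bases of Lemma~\ref{lemma_spanningtrees} it computes that $\im(\pi^\ast)\subseteq H^1(\widetilde\Gamma,\Z)$ is generated by $(\widetilde\epsilon_i^+)^\ast+(\widetilde\epsilon_i^-)^\ast$, $2\widetilde\epsilon_j^\ast$ ($b$-type) and $\widetilde\epsilon_j^\ast$ ($c$-type), so that in the saturated cokernel one has $(\widetilde\epsilon_i^+)^\ast\equiv-(\widetilde\epsilon_i^-)^\ast$; hence the induced polarization sends $\delta_i\mapsto 2(\widetilde\epsilon_i^+)^\ast$ and the $d$-type classes $\widetilde\epsilon_j\mapsto\widetilde\epsilon_j^\ast$, and $\lambda_\Xi$ is defined by halving the $a$-part. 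Principality is then immediate because $\lambda_\Xi$ carries a lattice basis of $\ker(\pi_\ast)$ to a lattice basis of the cokernel lattice $M_C$, and the same bookkeeping shows why \eqref{eq_2times} cannot hold once the dilation cycle is non-trivial. To salvage your route you would have to prove directly that halving the induced map $M^K\to M_C$ on the $a$-part yields an isomorphism of lattices, which amounts to exactly the explicit basis computation the paper performs rather than to a pairing identity with $\Gamma$.
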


We will see from the proof that formula \eqref{eq_2times} does not hold when the dilation cycle is non-trivial.

\begin{proof}
The claim about the connected components is proved almost verbatim as in \cite[Proposition 6.1]{JensenLen_thetachars} and is left to the avid reader. We focus on the statement about the polarizations.

Choose bases of $H_1(\Gamma)$ and  $H_1(\widetilde{\Gamma})$ as in Lemma \ref{lemma_spanningtrees}.  
The kernel of $\phi_\ast\colon H_1(\widetilde{\Gamma})\rightarrow H_1(\Gamma)$ is spanned by the vectors 
\begin{equation*}
\widetilde{\epsilon}_1^+-\widetilde{\epsilon}_1^-,\ldots, \widetilde{\epsilon}_a^+-\widetilde{\epsilon}_a^-, \widetilde{\epsilon}_{a+b+c+1},\ldots, \widetilde{\epsilon}_{a+b+c+d}\ .
\end{equation*}
and the image of $\phi^\ast\colon H^1(\Gamma)\rightarrow H^1(\widetilde{\Gamma})$ is generated by 
\begin{equation*}
    (\widetilde{\epsilon}_1^+)^\ast+(\widetilde{\epsilon}_1^-)^\ast,\ldots, (\widetilde{\epsilon}_a^+)^\ast+(\widetilde{\epsilon}_a^-)^\ast, 2\cdot\widetilde{\epsilon}_{a+1}^\ast,\ldots,2\cdot \widetilde{\epsilon}_{a+b}^\ast,\widetilde{\epsilon}_{a+b+1}^\ast,\ldots, \widetilde{\epsilon}_{a+b+c}^\ast\ .
\end{equation*}
The induced polarization is the one given by sending $\widetilde{\epsilon}_i^+-\widetilde{\epsilon}_i^-$ to $(\widetilde{\epsilon}_i^+)^\ast-(\widetilde{\epsilon}_i^-)^\ast$ for $i=1,\ldots, a$ as well as by sending $\widetilde{\epsilon}_{j}$ to $\widetilde{\epsilon}_{j}^\ast$ for $j=a+b+c+1,\ldots, a+b+c+d$. This is not an isomorphism, unless $a=0$, because in the cokernel, $(\widetilde{\epsilon}_i^+)^\ast-(\widetilde{\epsilon}_i^-)^\ast$ is identified with $2\cdot\widetilde{\epsilon}_i^\ast$. We, however, may define a principal polarization $\lambda_\Xi$ by sending $\widetilde{\epsilon}_i^+-\widetilde{\epsilon}_i^-$ to $\widetilde{\epsilon}_i^\ast$ for $i=1,\ldots, a$ and by sending $\widetilde{\epsilon}_{j}$ to $\widetilde{\epsilon}_{j}^\ast$ for $j=a+b++c+1,\ldots, a+b+c+d$. \end{proof}

\begin{example}
If $\pi:\widetilde\Gamma\to\Gamma$ is the first map in Example \ref{ex:unramified_cover}, then  the corresponding Prym is trivial.   
\end{example}

\begin{example}
Consider a double cover as indicated in Figure \ref{fig_smileymouth}. We have $a=0$. The dilation cycle is given by the two loops $e_1$ and $e_2$ and we find that $b=2$ with the two vectors $\widetilde{\epsilon}_1$ and $\widetilde{\epsilon}_2$ associated to $\widetilde{e}_1$ and $\widetilde{e}_2$. We have $\pi_\ast \widetilde{\epsilon}_i=\epsilon_i$ for $i=1,2$. The last basis vector $\widetilde{\epsilon}_3$ is the one associated to the $\widetilde{e}_3$ and we have $\pi_\ast \epsilon_3=0$. The kernel of $\Nm_\pi$ has only one component and the polarization on $\Pr(\Gamma,\pi)$ induced from $\Jac(\widetilde{\Gamma})$ is already principal. 
\end{example}

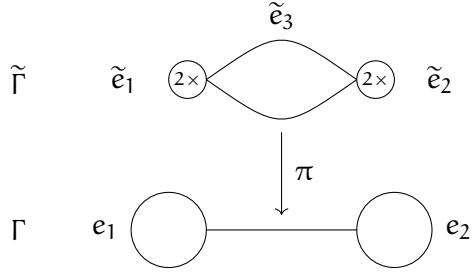
\begin{figure}
    \centering
        \begin{tikzpicture}
            \draw (0,0) circle (0.5);
            \draw (3,0) circle (0.5);
            \draw (0.5,0) -- (2.5,0);
            \node at (-0.85,0) {$e_1$};
            \node at (3.85,0) {$e_2$};

            \draw (0.25,2) circle (0.25);
            \draw (2.75,2) circle (0.25);
            \draw (0.5, 2) .. controls (1.5,1.3) .. (2.5, 2);
            \draw (0.5, 2) .. controls (1.5,2.7) .. (2.5, 2);
            \node at (-0.6,2) {$\widetilde{e}_1$};
            \node at (3.6,2) {$\widetilde{e}_2$};
            \node at (1.5,2.85) {$\widetilde{e}_3$};
            \node at (0.25,2) {\tiny $2\times$};
            \node at (2.75,2) {\tiny $2\times$};

            \draw[->]  (1.5,1.3) -- (1.5,0.2);
            \node at (1.8,0.75) {$\pi$};
            
            \node at (-2,0) {$\Gamma$};
            \node at (-2,2) {$\widetilde{\Gamma}$};

        \end{tikzpicture}
    \caption{A double cover whose dilation cycle is $e_1\cup e_2$.}
    \label{fig_smileymouth}
\end{figure}


\section{The skeleton of $\Pr(X,\pi)$ -- Proof of Theorem \ref{thm_skeletonofPrym=Prymofskeleton}} \label{section_skeletonofPrym}

In this section we first recall the theory of non-Archimedean uniformization of abelian varieties as developed in \cite{Bosch_goodreduction, BoschLuetkebohmert_uniformizationII, BoschLuetkebohmert_degeneratingabelvar} and describe their non-Archimedean skeletons \cite[Section 6.5]{Berkovich_book} as polarized tropical abelian varieties; we  mostly follow \cite[Section 4.1--4.3]{BakerRabinoff_skelJac=Jacskel}, \cite{FosterRabinoffShokriehSoto_tropicaltheta}, and the beautiful monograph \cite{Luetkebohmert_curves&Jacobians}. We then deduce Theorem \ref{thm_skeletonofPrym=Prymofskeleton} from the functoriality of this framework and Theorem \ref{thm_Prymvariety} above.



\subsection{Non-Archimedean uniformization}
Let $A$ be an abelian variety over $K$ with split semi-abelian reduction. The universal cover $E^{an}$ of $A^{an}$ carries a unique structure of a $K$-analytic group such that the covering map $E^{an}\rightarrow A^{an}$ is a morphism $K$-analytic groups. The $K$-analytic group is the analytification of an algebraic group $E$ that arises as the extension of an abelian variety $B$ by a split algebraic torus $T$, i.e. we have a short exact sequence
\begin{equation}\begin{tikzcd}\label{eq_TEB}
    0\arrow[r]& T \arrow[r] & E\arrow[r] &B\arrow[r] & 0 
\end{tikzcd}\end{equation}
of $K$-algebraic groups. 

The covering map $E^{an}\rightarrow A^{an}$, however, is not an algebraic morphism. Its kernel $M'=\ker(E^{an}\rightarrow A^{an})\simeq H_1(A^{an},\Z)$ is a finitely generated free abelian group and so we have a short exact sequence
\begin{center}\begin{tikzcd}
    0\arrow[r]& M' \arrow[r] & E^{an}\arrow[r] &A^{an}\arrow[r] & 0 
\end{tikzcd}\end{center}
of $K$-analytic groups. We may summarize this situation in terms of the \emph{Raynaud uniformization cross:}
\begin{equation*}
    \begin{tikzcd}
    & T^{an}\arrow[d]&\\
    M' \arrow[r]& E^{an}\arrow[r]\arrow[d]& A^{an} \ .\\
    & B^{an} & 
    \end{tikzcd}
\end{equation*}

Even more is true: Let $T_0$ be the affinoid torus in $T^{an}$. There is a unique compact analytic domain $A_0$ in $A^{an}$ that has the structure of formal $K$-analytic subgroup whose special fiber is an extension of an abelian variety $\overline{B}$ by a split algebraic torus $\overline{T}$. The abelian variety $\overline{B}$ is the special fiber of an abelian $R$-scheme model $\calB$ of $B$ and $\overline{T}$ is the special fiber of $T_0$. The short exact sequence
\begin{equation*}
    \begin{tikzcd}
        0\arrow[r] & \overline{T}\arrow[r]& \overline{A}\arrow[r]&\overline{B}\arrow[r]&0
    \end{tikzcd}
\end{equation*}
lifts to a short exact sequence
\begin{equation}\label{eq_T0A0B}
    \begin{tikzcd}
        0\arrow[r] & T_0\arrow[r]& A_0\arrow[r]&B^{an}\arrow[r]&0
    \end{tikzcd}
\end{equation}
of formal $K$-analytic groups and the short exact sequence \eqref{eq_TEB} is the pushout of \eqref{eq_T0A0B} along the inclusion $i\colon T_0\hookrightarrow T^{an}$.

\subsection{Duality and uniformization}\label{section_duality&uniformization} Now let $A'$ be the dual abelian variety of $A$. As explained in \cite[Section 6.3]{Luetkebohmert_curves&Jacobians}, its universal cover $E'$ is dual to $E$ and its Raynaud uniformization cross is given by 
\begin{equation*}
    \begin{tikzcd}
    & (T')^{an}\arrow[d]&\\
    M \arrow[r]& (E')^{an}\arrow[r]\arrow[d]& (A')^{an} \\
    & (B')^{an} & 
    \end{tikzcd}
\end{equation*}
where $T'$ and $B'$ are the duals of $T$ and $B$ respectively and $M$ is the kernel of $(E')^{an}\rightarrow (A')^{an}$. 

By \cite[Theorem A.2.8]{Luetkebohmert_curves&Jacobians} the finitely generated free abelian group $M$ is the character lattice of $T$ and, vice versa, $M'$ is the character lattice of $T'$. By \cite[Proposition 6.1.8]{Luetkebohmert_curves&Jacobians} there is a natural pairing
\begin{equation*}
    \langle.,.\rangle' \colon M'\times M\longrightarrow P_{B\times B'}
\end{equation*}
into the Poincar\'e bundle on $B\times B'$ such that the absolute value 
\begin{equation*}
    \langle.,.\rangle:=-\log\big\vert \langle .,.\rangle' \big\vert \colon M'\times M\longrightarrow \R
\end{equation*}
is non-degenerate. 

\subsection{Tropicalization and skeleton} Write $T=\Spec K[M]$. Set $N_\R=\Hom(M,\R)$ as well as $N_\R'=\Hom(M',\R)$. There is a natural continuous, proper, and surjective tropicalization map $\trop_T\colon T^{an}\rightarrow N_\R$ given by sending a point $x\in T^{an}$ to the homomorphism 
\begin{equation*}
    m\longmapsto -\log \vert \chi^m\vert_x \ ,
\end{equation*}
where $\chi^m$ is the character of $m$ in $K[M]$. We have $\trop^{-1}(0)=T_0$. Since 
\begin{equation}\label{eq_E=pushout}
    \begin{tikzcd}
        T_0\arrow[r,hookrightarrow]\arrow[d,hookrightarrow,"\subseteq"']& A_0\arrow[d]\\
        T^{an}\arrow[r] & E^{an}
    \end{tikzcd}
\end{equation}
is a pushout square, we may extend $\trop_{T}$ to a continous, proper, and surjective tropicalization map $\trop_E\colon E^{an}\rightarrow N_\R$ by declaring $\trop_{A_0}(x)=0\in N_\R$ for all $x\in A_0$. 

By \cite[Proposition 2.7.2 (c)]{Luetkebohmert_curves&Jacobians}, the restriction of $\trop_E$ to $M'$ is injective and its image defines a full rank lattice in $N_\R$, which we also denote by $M'$. A polarization of $A$ is given by an isogeny $A\rightarrow A'$. This induces a homomorphism $\lambda\colon M\rightarrow M'$ of finite index such that the bilinear form $\langle .,\lambda(.)\rangle$ is symmetric and non-degenerate.  So the integral real torus $\Sigma=\Sigma(A)=N_\R/M'$ is a tropical abelian variety. Moreover, a principal polarization is given by an isomorphism $A\xrightarrow{\sim}A'$, which, in turn, makes $\lambda\colon M'\xrightarrow{\sim} M$ into an isomorphism. 

The tropicalization map $\trop_E\colon E^{an}\rightarrow N_\R$ descends to a natural continuous, proper, and surjective tropicalization map $\trop_A\colon A^{an}\rightarrow \Sigma$. In \cite[Section 6.5]{Berkovich_book} Berkovich shows that there is a continuous section $J_A\colon \Sigma\rightarrow A^{an}$ of $\trop_A$ such that the composition 
\begin{equation*}
    \rho_A:=J_A\circ\trop_A\colon A^{an}\longrightarrow A^{an}
\end{equation*}
is a strong deformation retraction onto a closed subset of $A^{an}$ that is naturally homeomorphic to $\Sigma$, the \emph{non-Archimedean skeleton} of $A^{an}$. 

Denote the value group of $K$ by $H$. Given a closed subset $Y\subseteq A$, we define the \emph{tropicalization} of $Y$ to be 
\begin{equation*}
    \Trop(Y):= \trop_A(Y^{an}).
\end{equation*}
By Gubler's Bieri-Groves Theorem \cite[Theorem 6.9]{Gubler_trop&nonArch}, $\Trop(Y)$ has the structure of an $H$-rational polyhedral complex in $\Sigma$ of dimension at most $\dim(Y)$. If $Y$ is equidimensional and $A$ is totally degenerate, then we have $\dim(Y)=\dim\Trop_A(Y)$.

\subsection{Functoriality} \label{section_functoriality} Let $f\colon A_1\rightarrow A_2$ be a homomorphism of abelian varieties with split semiabelian reduction over $K$. Since both $E_1^{an}$ and $E_2^{an}$ are covering spaces, this homomorphism induces a homomorphism $\widetilde{f}\colon E_1\rightarrow E_2$ that makes the diagram
\begin{equation*}
    \begin{tikzcd}
        E_1^{an}\arrow[r,"\widetilde{f}^{an}"]\arrow[d] & E_2^{an}\arrow[d]\\
        A_1^{an}\arrow[r,"f^{an}"]& A_2^{an}
    \end{tikzcd}
\end{equation*}
commute, and which restricts to a homomorphism $f_{M'}\colon M_1'\rightarrow M_2'$ of the kernel lattices. Since 
\begin{equation*}
    \Hom(E_i,\G_m)=\Hom(T_i,\G_m) \ ,
\end{equation*} this induces a homomorphism $f_{M}\colon M_2\rightarrow M_1$ of character lattices. Thus the homomorphism $\widetilde{f}\colon E_1\rightarrow E_2$ restricts to a homomorphism $\widetilde{f}_T\colon T_1\rightarrow T_2$ and this induces a homomorphism $f_B\colon B_1\rightarrow B_2$ on the quotients $B_i=E_i/T_i$.

In the following well-known Proposition \ref{prop_functoriality} we summarize the functorial properties of the skeleton that will play a crucial role in the proof of Theorem \ref{thm_skeletonofPrym=Prymofskeleton} below. 

\begin{proposition}\label{prop_functoriality}
Let $f\colon A_1\rightarrow A_2$ be a homomorphisms of abelian varieties with split semiabelian reduction over $K$ and write $\Sigma_i=\Sigma(A_i)$ for $i=1,2$.
\begin{enumerate}[(i)]
    \item There is a unique integral homomorphism of tropical abelian varieties $\Sigma(f)\colon \Sigma_1\rightarrow\Sigma_2$ that makes the diagram
    \begin{equation*}
        \begin{tikzcd}
            A_1^{an} \arrow[rr,"f^{an}"]\arrow[d,"\trop_{A_1}"'] && A_2^{an}\arrow[d,"\trop_{A_2}"]\\
            \Sigma_1 \arrow[rr,"\Sigma(f)"] && \Sigma_2 
        \end{tikzcd}
    \end{equation*}
    commute. 
    \item The association $f\mapsto \Sigma(f)$ is functorial in $f$, i.e. we have $\Sigma(\id_{A})=\id_{\Sigma}$ and $\Sigma(f\circ g)=\Sigma(f)\circ\Sigma(g)$.
    
    \item If $f'\colon A'_2\rightarrow A_1'$ is the dual homomorphism to $f$, then the homomorphism $\Sigma(f')\colon\Sigma'_2\rightarrow\Sigma'_1$ is the dual homomorphism of $\Sigma(f)$, i.e. we have $\Sigma(f)'=\Sigma(f')$.
\end{enumerate}
\end{proposition}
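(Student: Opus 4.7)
The plan is to build $\Sigma(f)$ directly from the uniformization data recalled in Section~\ref{section_functoriality} and then read off all three properties from the functoriality of that data. First, $f^{an}\colon A_1^{an}\to A_2^{an}$ lifts uniquely to a morphism $\widetilde f\colon E_1^{an}\to E_2^{an}$ of universal covers, which is the analytification of an algebraic homomorphism by the rigidity of semiabelian varieties. As already recorded in Section~\ref{section_functoriality}, $\widetilde f$ restricts to a torus map $\widetilde f_T\colon T_1\to T_2$, giving a character-lattice map $f_M\colon M_2\to M_1$, and it restricts on kernels to a homomorphism $f_{M'}\colon M_1'\to M_2'$. The $\Z$-linear dual of $f_M$ is a map $N^1\to N^2$ whose $\R$-linearization carries $M_1'\subset N^1_\R$ into $M_2'\subset N^2_\R$ --- precisely because, under the embedding $M_i'\hookrightarrow N^i_\R$ coming from $\trop_{E_i}$, this map is identified with $f_{M'}$. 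This descends to the desired integral homomorphism $\Sigma(f)\colon \Sigma_1\to\Sigma_2$.

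Next, I would verify the commutativity of the square in (i) using the pushout description \eqref{eq_E=pushout}. On $T_i^{an}\subset E_i^{an}$ the map $\trop_{T_i}$ is the standard tropicalization of a split torus, and its naturality with respect to $\widetilde f_T$ is exactly the statement that the tropicalization of $\widetilde f_T$ is the $\R$-linear dual of $f_M$; on $A_{i,0}$ both tropicalizations are identically zero, so the extension to $E_i^{an}\to N_\R^i$ intertwines $\widetilde f^{an}$ with our candidate $\Sigma(f)$. Passing to the quotients $E_i^{an}/M_i'=A_i^{an}$ yields the desired commutative square. Uniqueness is immediate from the surjectivity of $\trop_{A_1}$: a continuous homomorphism of real tori with integral structure is determined by its values on any surjective image.

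Part (ii) is then essentially a tautology: each stage of the construction --- lifting to the universal cover, extracting $T_i$ and $B_i$, and taking character lattices --- is functorial in the algebraic morphism $f$, so composition is preserved and the identity goes to the identity. For part (iii), the crucial input is the pairing
\begin{equation*}
\langle\cdot,\cdot\rangle\colon M'\times M\longrightarrow \R
\end{equation*}
coming from the Poincar\'e bundle, as recalled in Section~\ref{section_duality&uniformization}, together with its naturality under morphisms: for a pair of dual homomorphisms $f$ and $f'$, the induced lattice maps satisfy
\begin{equation*}
\bigl\langle f_{M'}(m_1'),\, m_2\bigr\rangle_2 \;=\; \bigl\langle m_1',\, f'_{M}(m_2)\bigr\rangle_1,
\end{equation*}
which is precisely the adjunction characterizing the dual homomorphism of tropical abelian varieties introduced after Proposition~\ref{prop_tropker&coker}. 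The main technical point --- and the only non-formal step --- is to check that the identifications $M_i'\hookrightarrow N^i_\R$ are compatible with the Poincar\'e pairing on the dual side; this follows from \cite[Proposition~6.1.8]{Luetkebohmert_curves&Jacobians}, and once invoked the equality $\Sigma(f)'=\Sigma(f')$ is immediate.
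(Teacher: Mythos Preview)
Your proposal is correct and follows essentially the same route as the paper's proof: dualize the character-lattice map $f_M\colon M_2\to M_1$ to obtain $f_N\colon N^1_\R\to N^2_\R$, use the pushout square \eqref{eq_E=pushout} to pass from the torus-level commutativity to the $E$-level, and then invoke the adjunction identity between $f_M$ and $f_{M'}$ with respect to the Poincar\'e pairing to handle both the descent to $\Sigma_i$ and part~(iii). One small correction: the naturality identity you need, $\langle f_{M'}(m_1'),m_2\rangle_2=\langle m_1',f_M(m_2)\rangle_1$, is \cite[Proposition~6.4.1(a)]{Luetkebohmert_curves&Jacobians} rather than Proposition~6.1.8 (which only furnishes the pairing itself); the paper records this as equation~\eqref{eq_duality} and uses it in exactly the two places you do.
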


\begin{proof}
Let $f\colon A_1\rightarrow A_2$ be a homomorphism of abelian varieties with split semiabelian reduction over $K$. The homomorphism $f_{M}\colon M_2\rightarrow M_1$ of abelian groups dualizes to an integral homomorphism $f_N\colon N^1_\R\rightarrow N_\R^2$ such that the diagrams
\begin{equation*}
    \begin{tikzcd}
        T_1^{an}\arrow[rr,"f_T^{an}"]\arrow[d,"\trop_{T_2}"']&& T_2^{an}\arrow[d,"\trop_{T_2}"]\\
        N_\R^1\arrow[rr,"f_N"]&& N_\R^2
    \end{tikzcd}\qquad \textrm{and}\qquad 
    \begin{tikzcd}
        E_1^{an}\arrow[rr,"f_T^{an}"]\arrow[d,"\trop_{E_1}"']&& E_2^{an}\arrow[d,"\trop_{E_2}"]\\
        N_\R^1\arrow[rr,"f_N"]&& N_\R^2
    \end{tikzcd}
\end{equation*}
commute. Here the commutativity of the diagram on the right follows fromt the commutativity of the diagram on the left, since \eqref{eq_E=pushout} is a pushout diagram. From \cite[Proposition 6.4.1 (a)]{Luetkebohmert_curves&Jacobians} it follows that for all  $m'\in M_1'$ and $m\in M_2$ we have
\begin{equation}\label{eq_duality}
    \big\langle m',f_{M}(m)\big\rangle_1 = \big\langle f_{M'}(m'), m\big\rangle_2 \ . 
\end{equation}
So the integral homomorphism $f_N\colon N_\R^1\rightarrow N_\R^2$ descends to a homomorphism $f_\Sigma\colon \Sigma_1\rightarrow \Sigma_2$ of tropical abelian varieties. The association $f\mapsto f_\Sigma$ is functorial and makes the induced diagram
\begin{equation*}
    \begin{tikzcd}
        A_1^{an}\arrow[rr,"f^{an}"]\arrow[d,"\trop_{A_1}"']&& A_2^{an}\arrow[d,"\trop_{A_2}"]\\
        \Sigma_1\arrow[rr,"f_\Sigma"] &&\Sigma_2
    \end{tikzcd}
\end{equation*}
commute. Similarly, the dual homomorphism $f'\colon A'_2\rightarrow A'_1$ gives rise to a homomorphism $f'_{\Sigma}\colon\Sigma'_2\rightarrow \Sigma'_1$ of the dual tropical abelian varieties 
coincides with the dual homomorphism of $f_\Sigma$ by equation \eqref{eq_duality}.
\end{proof}

Denote by $\ker(f)_0$ the connected component of the kernel of $f$ containing zero and by $\coker(f)$ the cokernel of $f$. The following Corollary \ref{cor_kernelskeleton} is a central ingredient in the proof of Theorem \ref{thm_skeletonofPrym=Prymofskeleton} in Section \ref{section_skeletonofPrym} below.

\begin{corollary}\label{cor_kernelskeleton}
Let $f\colon A_1\rightarrow A_2$ be a homomorphisms of polarized abelian varieties with split semiabelian reduction over $K$. 
\begin{enumerate}[(i)]
\item The skeleton of $\ker(f)_0$ is naturally isomorphic (as a polarized tropical abelian variety) to $\ker(\Sigma(f))_0$.
\item The skeleton of $\coker(f)_0$ is naturally isomorphic (as a polarized tropical abelian variety) to $\coker(\Sigma(f))$. 
\end{enumerate}
\end{corollary}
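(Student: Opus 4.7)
The plan is to derive both parts from Proposition \ref{prop_functoriality} applied to the natural inclusion and quotient morphisms, and to match the outputs with the description of kernels and cokernels of tropical abelian varieties given in Proposition \ref{prop_tropker&coker}. For part (i), let $i\colon\ker(f)_0\hookrightarrow A_1$ denote the inclusion. Since $f\circ i=0$, functoriality of $\Sigma$ yields $\Sigma(f)\circ\Sigma(i)=0$, so $\Sigma(i)$ factors canonically through an integral homomorphism
\begin{equation*}
\iota\colon \Sigma\bigl(\ker(f)_0\bigr)\longrightarrow \ker\bigl(\Sigma(f)\bigr)_0
\end{equation*}
of tropical abelian varieties. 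The first task is to check that $\iota$ is an isomorphism of real tori. By Gubler's Bieri--Groves Theorem both sides have dimension $\dim A_1-\mathrm{rk}(f_{M'})$, where $f_{M'}\colon M_1'\to M_2'$ is the induced map of kernel lattices from Section \ref{section_functoriality}. Injectivity is immediate because $\Sigma(i)$ is induced at the level of kernel lattices by the inclusion $M'_{\ker}\hookrightarrow M_1'$; surjectivity then follows from dimension equality together with the explicit description of $\ker(\Sigma(f))_0$ in the proof of Proposition \ref{prop_tropker&coker}(i).

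The second step is to match the integral structures, i.e.\ to show that $\iota$ is an integral isomorphism. This amounts to identifying the character lattice of the toric part of the universal cover $E_{\ker(f)_0}$ with the lattice $M_C=M_1/\im(f_M)^{sat}$ used in the proof of Proposition \ref{prop_tropker&coker}(i). The universal cover of $\ker(f)_0$ is the connected component of zero in $\ker(\widetilde f)$, where $\widetilde f\colon E_1\to E_2$ is the lift from Section \ref{section_functoriality}, and its toric part is obtained by saturating the image of the toric part of $\widetilde f$ and quotienting. Via the pushout diagram \eqref{eq_E=pushout} and the character-lattice identifications recalled in Section \ref{section_duality&uniformization}, the resulting character lattice coincides with $M_C$, as desired.

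For the polarizations, the induced polarization on $\ker(f)_0$ is by definition $i^*\phi_1$, obtained by pulling back $\phi_1\colon A_1\to A_1'$ along $i$ and composing with the dual quotient $q'\colon A_1'\to(\ker(f)_0)'$. Tropicalizing this chain and applying Proposition \ref{prop_functoriality}(iii), which identifies $\Sigma(f')$ with $\Sigma(f)'$, one recovers precisely the induced polarization $\iota^*\Sigma(\phi_1)$ appearing in Proposition \ref{prop_tropker&coker}(i). Part (ii) is entirely dual: replace $i$ by the quotient morphism $q\colon A_2\to\coker(f)$, apply the same functoriality arguments, and use Proposition \ref{prop_tropker&coker}(ii). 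The main obstacle I expect is the integral-structure matching in the second step above: one needs to trace carefully through the Raynaud uniformization to see that algebraic saturation of the image of $f_M$ inside $M_2$ corresponds to the tropical saturation used in the definition of $\coker$ in Proposition \ref{prop_tropker&coker}, and for this the character-theoretic identifications of Section \ref{section_duality&uniformization} together with the pushout \eqref{eq_E=pushout} are essential.
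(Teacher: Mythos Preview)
Your overall strategy is sound and arrives at the same endpoint as the paper, but the first paragraph contains a genuine error and an unnecessary detour. Invoking Gubler's Bieri--Groves theorem here is inappropriate: that result concerns tropicalizations of closed subvarieties inside a fixed abelian variety and only yields an equality of dimensions under a total-degeneracy hypothesis that you have not assumed. More seriously, the dimension formula ``$\dim A_1-\mathrm{rk}(f_{M'})$'' is wrong: the skeleton $\Sigma(A_1)$ has dimension equal to the toric rank $\mathrm{rk}(M_1')$, not $\dim A_1$, and in general these differ by the dimension of the compact abelian part $B_1$. The correct common dimension of both sides is $\mathrm{rk}(M_1')-\mathrm{rk}(f_{M'})$.

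The good news is that your second paragraph already contains the real argument, and it renders the first paragraph superfluous. Once you identify the Raynaud uniformization of $\ker(f)_0$ as having toric part $\ker(f_T)_0$ and kernel lattice $\ker(f_{M'})$, the equivalence of split tori and lattices gives directly that $\Sigma(\ker(f)_0)=\ker(f_N)/\ker(f_{M'})=\ker(\Sigma(f))_0$ as integral real tori, with no need for a separate injectivity/surjectivity/dimension count. This is exactly how the paper proceeds: it writes down the Raynaud cross for $\ker(f)_0$ and reads off the skeleton in one line. Your treatment of the polarization via Proposition~\ref{prop_functoriality}(iii) matches the paper's. So: drop the first paragraph, promote the second to the main argument, and you have the paper's proof.
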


\begin{proof}
The abelian variety $\ker(f)_0$ has a Raynaud uniformization cross:
\begin{equation*}
    \begin{tikzcd}
    & \ker(f_T)_0^{an}\arrow[d]&\\
    \ker(f_{M'}) \arrow[r]& \big(E_{\ker(f)_0}\big)^{an}\arrow[r]\arrow[d]& \ker(f)_0^{an} \ .\\
    & \ker(f_B)_0^{an} & 
    \end{tikzcd}
\end{equation*}
Since there is a one-to-one correspondence between split algebraic tori and lattices, the tropicalization of $\ker(f_T)_0$ is naturally isomorphic to $\ker(f_N)$ and this isomorphism descends to an isomorphism $\ker(f_N)/\ker(f_{M'})\xrightarrow{\sim}\ker(f_\Sigma)_0$ of integral real tori. 
Let $\phi_i\colon A_i\rightarrow A_i'$ be a polarization of $A_i$ for $i=1,2$, such that $f'\circ\phi_2\circ f=\phi_1$. Then, by Proposition \ref{prop_functoriality}, we have 
\begin{equation*}
    \Sigma(\phi_1)=(f')_\Sigma \circ \Sigma(\phi_2)\circ f_\Sigma = (f_\Sigma)' \circ \Sigma(\phi_2) \circ f_\Sigma
\end{equation*}
and so the induced polarization of $\ker(f)_0$ tropicalizes to the induced polarization of $\ker(f_\Sigma)$. The argument for Part (ii) proceeds dually and is left to the avid reader. 
\end{proof}

\subsection{The skeleton of $\Jac(X)$}\label{section_skelJac=Jacskel} Let $X$ be a smooth projective curve over $K$. The Jacobian $\Jac(X)$ of $X$ is an abelian variety with split semi-abelian reduction whose uniformization is given by $\Jac(X)=E^{an}/M'$, where 
\begin{equation*}
    M'=H_1\big(\Jac(X)^{an}\big)=H_1(X^{an})=H_1(\Gamma_X) \ ,
\end{equation*}
since $\Gamma_X$ is the non-Archimedean skeleton of $X^{an}$. In \cite[Theorem 1.3]{BakerRabinoff_skelJac=Jacskel} Baker and Rabinoff prove that there is a canonical isomorphism
\begin{equation*}
\mu_{X}\colon\Jac(\Gamma_X) \xlongrightarrow{\simeq} \Sigma\big(\Jac(X)\big)
\end{equation*}
of principally polarized tropical abelian varieties that naturally commutes with the Abel-Jacobi maps, i.e. that makes the natural diagram
\begin{equation}\label{eq_skelJac=Jacskel}\begin{tikzcd}
X^{an}\arrow[rr,"\alpha_{q}"]\arrow[d,"\rho_X"']&&\Jac(X,\pi)^{an} \arrow[d,"\rho_{\Jac(X)}"] \\
  \Gamma_X\arrow[r,"\alpha_{\rho(q)}"]&\Jac(\Gamma_X)\arrow[r,"\sim","\mu_X"']&\Sigma\big(\Jac(X)\big) 
\end{tikzcd}\end{equation}
commute. The commutativity of \eqref{eq_skelJac=Jacskel} allows us to identify $\rho_{\Jac(X,\pi)}$ with Baker's specialization map $\trop_{\Pic_0(X)}\colon \Pic_0(X)^{an}\rightarrow \Pic_0(\Gamma_X)$ from \cite{Baker_specialization} given by pushing forward divisor to $\Gamma_X$, i.e. by $[D]\mapsto \big[\rho_{X,\ast}D\big]$ for a divisor class $[D]$ on $X_{K'}$ for a non-Archimedean extension $K'$ of $K$.

\subsection{Analytic and tropical norm maps}

Let $\pi\colon \widetilde{X}\rightarrow X$ be finite morphism. Then, as explained in \cite[Appendix B.1]{ACGHI}, there is a natural \emph{norm homomorphism} $\Nm_\pi\colon\Pic(\widetilde{X})\rightarrow \Pic(X)$ given by $\calO(D)\mapsto\calO(\pi_\ast D)$. We observe the following:

\begin{proposition}\label{prop_commcube}
Let $\widetilde{x}\in \widetilde{X}$ and set $x=\pi(\widetilde{q})$, $\widetilde{q}=\rho_{\widetilde{X}}(\widetilde{x})$, and $q=\rho_X(x)$. Then the diagram 
\begin{center}
    \begin{tikzcd}[row sep=2em, column sep = 2em]
    \widetilde{X}^{an} \arrow[rrr,"\alpha_{\widetilde{x}}^{an}"] \arrow[dr,"\pi^{an}"] \arrow[ddd,"\rho_{\widetilde{X}}"'] &&&
    \Jac(\widetilde{X})^{an} \arrow[ddd,"\rho_{\Jac(\widetilde{X})}"] \arrow[dr,"\Nm_\pi^{an}"] \\
    & X^{an} \arrow[rrr,"\alpha_x^{an}\ \ \ \ \ \ \ \ \ "]\arrow[ddd,"\rho_X"'] &&&
    \Jac(X)^{an} \arrow[ddd,"\rho_{\Jac(X)}"] \\ \\
    \Gamma_{\widetilde{X}}\arrow[rrr,"\ \ \ \ \ \ \ \ \ \alpha_{\widetilde{q}}"] \arrow[dr, "\pi^{trop}"'] &&& \Jac(\Gamma_{\widetilde{X}}) \arrow[dr,"\Nm_{\pi^{trop}}"] \\
    & \Gamma_X \arrow[rrr,"\alpha_{q}\ "] &&& \Jac(\Gamma_X)
    \end{tikzcd}
    \end{center}
commutes.
\end{proposition}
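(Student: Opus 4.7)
The strategy is to verify commutativity of the cube face by face. The cube decomposes into six two-dimensional faces: a top face (analytic Abel-Jacobi compatible with the norm), a bottom face (its tropical analogue), a back and a front face (instances of Baker and Rabinoff's diagram~\eqref{eq_skelJac=Jacskel}), a left face (compatibility of $\pi^{an}$ with the retractions to the skeletons), and a right face (the analogous compatibility for $\Nm_\pi^{an}$). Each face will commute for a different reason already established in the paper or in the literature.

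The top face commutes because $\Nm_\pi\calO(\widetilde{p}-\widetilde{x})=\calO\big(\pi(\widetilde{p})-\pi(\widetilde{x})\big)$ for every closed point $\widetilde{p}$ of $\widetilde{X}$, by definition of the norm. The bottom face commutes by Proposition~\ref{prop_NormAbelJacobi} together with Corollary~\ref{cor_norm=dualtopullback}, the latter identifying the map dual to pullback with the tropical norm $\Nm_{\pi^{trop}}$. The front and back faces are Baker and Rabinoff's diagram~\eqref{eq_skelJac=Jacskel} applied to $X$ and $\widetilde{X}$ respectively. The left face is the content of \cite[Theorem~A]{ABBRI}: the map $\pi^{an}$ carries $\Gamma_{\widetilde{X}}$ onto $\Gamma_X$ via $\pi^{trop}$ and commutes with the skeleton retractions, so $\rho_X\circ \pi^{an}=\pi^{trop}\circ \rho_{\widetilde{X}}$. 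For the right face I would use the identification of $\rho_{\Jac(X)}$ with Baker's specialization map $[D]\mapsto [\rho_{X,\ast}D]$ recalled in Section~\ref{section_skelJac=Jacskel} (and likewise for $\widetilde{X}$), combined with the left face: for any divisor class $[\widetilde{D}]$ on $\widetilde{X}_{K'}$,
\[
\rho_{\Jac(X)}\big(\Nm_\pi^{an}[\widetilde{D}]\big)=[\rho_{X,\ast}\pi_\ast\widetilde{D}]=[(\rho_X\circ \pi^{an})_\ast\widetilde{D}]=[(\pi^{trop}\circ \rho_{\widetilde{X}})_\ast\widetilde{D}]=\Nm_{\pi^{trop}}\big(\rho_{\Jac(\widetilde{X})}[\widetilde{D}]\big).
\]
Once each face is shown to commute, commutativity of the whole cube follows by a direct diagram chase.

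The main subtlety is careful bookkeeping with the several implicit identifications. Baker and Rabinoff's isomorphisms $\mu_X,\mu_{\widetilde{X}}$ are used silently so that $\rho_{\Jac}$ may be viewed as landing in the tropical Jacobian, and so that the analytic norm tropicalizes to the tropical norm. The cleanest way to verify that these identifications intertwine correctly is via the specialization description of $\rho_{\Jac}$, as done for the right face above; alternatively, one may invoke the functoriality of the skeleton construction (Proposition~\ref{prop_functoriality}) applied to $f=\Nm_\pi$, together with the fact that the norm is dual to the pullback both analytically and tropically.
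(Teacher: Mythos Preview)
Your proof is correct and follows the same face-by-face strategy as the paper. The only notable difference is the right face: the paper deduces its commutativity from the other five (implicitly using that the Abel--Jacobi image generates the Jacobian as a group, so equality of the two compositions on that image suffices), whereas you verify it directly via the specialization description of $\rho_{\Jac}$ recalled in Section~\ref{section_skelJac=Jacskel}. Both arguments are valid; yours is slightly more self-contained. You are also a touch more careful than the paper in citing Corollary~\ref{cor_norm=dualtopullback} alongside Proposition~\ref{prop_NormAbelJacobi} for the bottom face, since the latter is stated with $(\pi^\ast)'$ rather than $\Nm_{\pi^{trop}}$.
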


\begin{proof}  
The top square commutes, because $(.)^{an}$ is a functor, the bottom square commutes by Proposition  \ref{prop_NormAbelJacobi}, the left square commutes by \cite[Theorem A]{ABBRI}, the front and back squares commute by \cite[Theorem 1.3]{BakerRabinoff_skelJac=Jacskel}, i.e. by the commutativity of \eqref{eq_skelJac=Jacskel}. This implies that also the square on the right commutes. 
\end{proof}

By Proposition \ref{prop_functoriality} (i), there is an induced homomorphism 
\begin{equation*}
    \Sigma(\Nm_{\pi})\colon \Sigma\big(\Jac(\widetilde{X})\big)\longrightarrow \Sigma\big(\Jac(X)\big) \ .
\end{equation*} The following Corollary \ref{cor_tropnorm} shows that this construction agrees with the tropical norm map defined in Section \ref{section_tropicalnormmapI}. 

\begin{corollary}
\label{cor_tropnorm}
For a finite morphism $\pi\colon \widetilde{X}\rightarrow X$, 
the induced map $\Sigma(\Nm_\pi)$ is equal to $\Nm_{\pi^{trop}}$.
\end{corollary}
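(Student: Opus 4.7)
The plan is to deduce Corollary \ref{cor_tropnorm} as a formal consequence of the uniqueness clause in Proposition \ref{prop_functoriality}(i), once the right face of the cube in Proposition \ref{prop_commcube} is in hand. Explicitly, Proposition \ref{prop_functoriality}(i) characterizes $\Sigma(\Nm_\pi)$ as the unique integral homomorphism of tropical abelian varieties $\Sigma\bigl(\Jac(\widetilde{X})\bigr)\rightarrow \Sigma\bigl(\Jac(X)\bigr)$ that intertwines the analytification $\Nm_\pi^{an}$ with the tropicalization maps $\trop_{\Jac(\widetilde{X})}$ and $\trop_{\Jac(X)}$. So it suffices to exhibit $\Nm_{\pi^{trop}}$, transported along the Baker--Rabinoff isomorphisms $\mu_{\widetilde{X}}\colon\Jac(\Gamma_{\widetilde{X}})\xrightarrow{\sim}\Sigma\bigl(\Jac(\widetilde{X})\bigr)$ and $\mu_X\colon \Jac(\Gamma_X)\xrightarrow{\sim}\Sigma\bigl(\Jac(X)\bigr)$, as a second integral homomorphism with the same intertwining property; then uniqueness forces the two to coincide.

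Both ingredients have already been assembled earlier in the paper. First, Corollary \ref{cor_norm=dualtopullback} shows that $\Nm_{\pi^{trop}}$ is indeed an integral homomorphism of tropical abelian varieties, since it is dual to the integral pullback homomorphism $\pi^\ast\colon \Alb(\Gamma_X)\rightarrow \Alb(\Gamma_{\widetilde{X}})$. Second, the right face of the commutative cube in Proposition \ref{prop_commcube} is precisely the statement that $\Nm_{\pi^{trop}}\circ\rho_{\Jac(\widetilde{X})} = \rho_{\Jac(X)}\circ \Nm_\pi^{an}$, after identifying the skeletons with the tropical Jacobians via $\mu_{\widetilde{X}}$ and $\mu_X$. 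Passing from the retraction $\rho_A = J_A\circ \trop_A$ to the tropicalization $\trop_A$ itself is automatic, since $J_A$ is a section of $\trop_A$ that identifies the skeleton with $\Sigma$. Applying the uniqueness clause of Proposition \ref{prop_functoriality}(i) then yields $\Sigma(\Nm_\pi)=\Nm_{\pi^{trop}}$.

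The main substance has already been absorbed into Proposition \ref{prop_commcube}; the only thing left to verify at this stage is the bookkeeping of identifications. I would expect the sole minor obstacle to be confirming that the identification of the two models of the skeleton, $\Jac(\Gamma_X)$ and $\Sigma\bigl(\Jac(X)\bigr)$, is functorial enough that transporting $\Nm_{\pi^{trop}}$ across $\mu_{\widetilde{X}}$ and $\mu_X$ really does produce a homomorphism fulfilling the hypotheses of Proposition \ref{prop_functoriality}(i); but this functoriality is exactly what the commutativity of the front and back faces of the cube in Proposition \ref{prop_commcube} encodes, so no further work is needed.
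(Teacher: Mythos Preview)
Your proposal is correct and follows essentially the same route as the paper: invoke the uniqueness clause of Proposition~\ref{prop_functoriality}(i) and then use the right face of the cube in Proposition~\ref{prop_commcube} to conclude that $\Nm_{\pi^{trop}}$ must coincide with $\Sigma(\Nm_\pi)$. Your write-up is more explicit about the bookkeeping (the Baker--Rabinoff identifications and the integrality of $\Nm_{\pi^{trop}}$ via Corollary~\ref{cor_norm=dualtopullback}), but the underlying argument is identical to the paper's one-line proof.
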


\begin{proof}
By Proposition \ref{prop_functoriality} (i), the induced homomorphism $\Sigma(\Nm_{\pi})\colon \Sigma\big(\Jac(\widetilde{X})\big)\rightarrow \Sigma\big(\Jac(X)\big)$ is unique and so we find, using Proposition \ref{prop_commcube}, that  $\Sigma(\Nm_\pi)=\Nm_{\pi^{trop}}$.
\end{proof}


\subsection{Skeletons of (generalized) Prym varieties} 
Let $\pi\colon \widetilde{X}\rightarrow X$ be a finite morphism. We define the \emph{generalized Prym variety} $\Pr(X,\pi)$ associated to this datum to be the component of $\ker(\Nm_\pi)$ containing zero. Given a finite harmonic morphism $\pi\colon \widetilde{\Gamma}\rightarrow \Gamma$ of tropical curves, we define in complete analogy the \emph{higher tropical Prym variety} $\Pr(\Gamma, \pi)$ to be the component of the kernel of $\ker(Nm_\pi)$ containing zero. Both the algebraic and the tropical higher Prym variety naturally carry the induced polarization from $\Jac(\widetilde{X})$ and $\Jac(\widetilde{\Gamma})$ respectively. 

The following Theorem \ref{thm_skeletonofhigherPrym=higherPrymofskeleton} partially generalizes Theorem \ref{thm_skeletonofPrym=Prymofskeleton} to higher Prym varieties.

\begin{theorem}\label{thm_skeletonofhigherPrym=higherPrymofskeleton}
Let $\pi\colon \widetilde{X}\rightarrow X$ be a finite homomorphism. There is a natural isomorphism
\begin{equation*}
\mu_{X,\pi}\colon \Pr(\Gamma_X, \pi^{trop}) \xlongrightarrow{\simeq} \Sigma\big(K_0(X,\pi)\big)
\end{equation*}
of polarized tropical abelian varieties that makes the diagram
\begin{center}\begin{tikzcd}
X^{an}\arrow[rrr,"\rho_X"]\arrow[d,"\alpha_{X,\pi}^{an}"']&&& \Gamma_X\arrow[d,"\alpha_{\Gamma,\pi^{trop}}"]\\
\Pr(X,\pi)^{an}\arrow[rr, "\rho_{\Pr(X,\pi)}"] && \Sigma\big(\Pr(X,\pi)\big) & \Pr(\Gamma_X, \pi^{trop})\arrow[l,"\sim"',"\mu_{X,\pi}"]
\end{tikzcd}\end{center}
commute.  
\end{theorem}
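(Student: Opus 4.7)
The plan is to assemble the isomorphism $\mu_{X,\pi}$ from three pieces already in place: the Baker--Rabinoff isomorphisms $\mu_{\widetilde X}$ and $\mu_X$ between skeletons of Jacobians and tropical Jacobians, the functoriality of the skeleton on connected components of kernels (Corollary \ref{cor_kernelskeleton}(i)), and the identification $\Sigma(\Nm_\pi) = \Nm_{\pi^{trop}}$ (Corollary \ref{cor_tropnorm}). Commutativity with the Abel--Prym maps will follow from Proposition \ref{prop_functoriality} combined with the Baker--Rabinoff Abel--Jacobi compatibility \eqref{eq_skelJac=Jacskel}.

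To construct $\mu_{X,\pi}$, I would apply Corollary \ref{cor_kernelskeleton}(i) to the homomorphism $\Nm_\pi\colon \Jac(\widetilde X) \to \Jac(X)$ of polarized abelian varieties. This yields a canonical isomorphism
\begin{equation*}
\Sigma\bigl(\Pr(X,\pi)\bigr) \;=\; \Sigma\bigl(\ker(\Nm_\pi)_0\bigr) \;\xrightarrow{\sim}\; \ker\bigl(\Sigma(\Nm_\pi)\bigr)_0
\end{equation*}
of polarized tropical abelian varieties. Transporting the right-hand side along $\mu_{\widetilde X}$ and $\mu_X$ and invoking Corollary \ref{cor_tropnorm} to identify $\Sigma(\Nm_\pi)$ with $\Nm_{\pi^{trop}}$ turns the target into $\ker(\Nm_{\pi^{trop}})_0 = \Pr(\Gamma_X, \pi^{trop})$. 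Inverting this composition defines $\mu_{X,\pi}$ as an isomorphism of polarized tropical abelian varieties.

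For commutativity of the outer square, I would factor the algebraic Abel--Prym map as $\alpha_{X,\pi} = i' \circ \alpha_{\widetilde q}$ and the tropical one as $\alpha_{\Gamma,\pi^{trop}} = (i^{trop})' \circ \alpha_{\widetilde q}^{trop}$, where $i$ and $i^{trop}$ denote the inclusions of the respective Prym varieties into the ambient Jacobians. Baker--Rabinoff's diagram \eqref{eq_skelJac=Jacskel}, applied to $\widetilde X$, already ensures that the Abel--Jacobi maps intertwine $\rho_{\widetilde X}$ with $\rho_{\Jac(\widetilde X)}$ under $\mu_{\widetilde X}$. Naturality of the isomorphism in Corollary \ref{cor_kernelskeleton}(i) identifies $\Sigma(i)$ with $i^{trop}$ under $\mu_{X,\pi}$ and $\mu_{\widetilde X}$; Proposition \ref{prop_functoriality}(iii) then gives $\Sigma(i') = \Sigma(i)' = (i^{trop})'$ under the same identifications. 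Pasting together the resulting commutative squares --- the Baker--Rabinoff Abel--Jacobi square on top, the naturality square for $i'$ versus $(i^{trop})'$ on the bottom --- produces the required commutativity.

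The main obstacle I anticipate is not a new conceptual step but careful bookkeeping of polarizations. One must check that the polarization on $\Sigma(\Pr(X,\pi))$ induced by pulling back the principal polarization $\lambda_{\widetilde\Theta}$ on $\Jac(\widetilde X)$ along $i$ and pushing down via $i'$ matches, under $\mu_{X,\pi}$, the analogous tropical pullback--pushforward polarization on $\Pr(\Gamma_X,\pi^{trop})$. In the generality of an arbitrary finite morphism $\pi$, this induced polarization need not be principal, so throughout the argument I would work with it directly rather than with any principal refinement such as the $\lambda_\Xi$ of Theorem \ref{thm_Prymvariety} available in the unramified double-cover case. Once this is tracked correctly, the polarization compatibility is precisely the statement of Corollary \ref{cor_kernelskeleton}(i), and the rest of the verification is formal.
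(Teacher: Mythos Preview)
Your proposal is correct and follows essentially the same route as the paper: identify $\Sigma(\Nm_\pi)$ with $\Nm_{\pi^{trop}}$ via Corollary~\ref{cor_tropnorm}, apply Corollary~\ref{cor_kernelskeleton}(i) to pass from the skeleton of the kernel to the kernel of the skeleton map, and then deduce Abel--Prym compatibility from the Baker--Rabinoff Abel--Jacobi square together with Proposition~\ref{prop_functoriality}(iii). Your write-up is in fact more explicit than the paper's about the factorization $\alpha_{X,\pi}=i'\circ\alpha_{\widetilde q}$ and about why only the induced (not necessarily principal) polarization is claimed in this generality, but the underlying argument is the same.
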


\begin{proof}
By Corollary \ref{cor_tropnorm}, the induced map $\Sigma(\pi)\colon \Sigma(\Jac(\widetilde{X}))\rightarrow \Jac(\Sigma(X))$ may be identified with the tropical norm map $\Nm_{\pi^{trop}}\colon \Jac(\widetilde{\Gamma})\rightarrow \Jac(\Gamma)$. Consequently, the tropical Prym variety $\Pr(\Gamma,\pi)$ is equal to the zero component of the kernel of $\Sigma(f)$, which, by Corollary \ref{cor_kernelskeleton}, is equal to the skeleton of $\Pr(X,\pi)^{an}$. Finally, Proposition \ref{cor_tropnorm} together with the natural compatibility of tropicalization with both the Abel-Jacobi map, as proved in \cite[Theorem 1.3]{BakerRabinoff_skelJac=Jacskel} (also see Section \ref{section_skelJac=Jacskel} above), and with dual homomorphisms, as proved in Proposition \ref{prop_functoriality} (iii), implies that the retraction $\rho_{X,\pi}$ to the skeleton commutes with the Abel-Prym map. 
\end{proof}

We conclude this section with the proof of Theorem \ref{thm_skeletonofPrym=Prymofskeleton}.

\begin{proof}[Proof of Theorem \ref{thm_skeletonofPrym=Prymofskeleton}]
Let $\pi\colon 
\widetilde{X}\rightarrow X$ be an unramified double cover. By \cite{Mumford_Prym}, there is a principal polarization $\Xi$ on $\Pr(X,\pi)$ such that $i^\ast \widetilde{\Theta}=2\cdot \Xi$. Denote by $i^\ast\phi_{\widetilde{\Theta}}$ and $\phi_\Xi$ the corresponding maps $\Pr(X,\pi)\rightarrow\Pr(X,\pi)'$ to their duals. Then $i^\ast \widetilde{\Theta}=2\cdot \Xi$ can be rephrased as saying that $i^\ast\phi_{\widetilde{\Theta}}=\phi_\Xi^2$. 

Denote the toric parts of the universal covers of both $\Jac(\widetilde{X})$ and $\Jac(X)$ by $\widetilde{T}$ and $T$ respectively. Then we have $\widetilde{T}=H_1(\widetilde{\Gamma})\otimes\GG_m$ and $T=H_1(\Gamma)\otimes\GG_m$. By Corollary \ref{cor_tropnorm} and Corollary \ref{cor_norm=dualtopullback}, the morphism induced by the norm map $\Nm_\pi\colon\Jac(\widetilde{X})\rightarrow\Jac(X)$ is the one induced by the pushforward map  $\pi_\ast\colon H_1(\widetilde{\Gamma})\rightarrow H_1(\Gamma)$. So, choosing spanning trees as in Lemma \ref{lemma_spanningtrees}, we see that $\phi_\Xi$ can only induce the principal polarization $\lambda_\Xi$ from Theorem \ref{thm_Prymvariety}. 
\end{proof}
  



\section{Tropical Prym--Brill--Noether theory}
In this section, we discuss the theory of special divisors on a torsor of the Prym variety.

 \begin{definition}
Let $\Gamma$ be a tropical curve of genus $g$ and $\pi\colon\widetilde{\Gamma}\rightarrow\Gamma$ an unramified double cover. The \emph{Prym--Brill--Noether} locus associated with $\pi$ is the collection of divisor classes that map down to the canonical divisor of $\Gamma$.  Explicitly,
\begin{equation*}
 V = V(\Gamma,\pi)=\big\{
 [D]\in\Pic_{2g-2}(\widetilde{\Gamma})\big\vert   \Nm_\pi[D]=[K_{\Gamma}] 
 \big\}.
 \end{equation*}
\end{definition}

We refer to divisors whose class is in $V$ as \emph{Prym} divisors. 
The algebraic version of the Prym--Brill--Noether locus consists of two connected components. 
In nice cases, the analogous tropical statement is  true as well.
\begin{proposition}\label{prop:PBN_components}
If $\pi$ is a topological double cover, then $V$ is a disjoint union of two connected components. Otherwise, $V$ consists of a single component. 
\end{proposition}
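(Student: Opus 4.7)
The plan is to identify $V$ with a torsor over $\ker(\Nm_\pi)\cap\Pic_0(\widetilde{\Gamma})$, so that the statement reduces directly to Theorem \ref{thm_Prymvariety}, which already records the number of connected components of this kernel.

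First I would check that $V$ is non-empty. Since $\pi_\ast$ preserves degrees, the norm map restricts to a homomorphism $\Nm_\pi\colon\Pic_{2g-2}(\widetilde{\Gamma})\to\Pic_{2g-2}(\Gamma)$ of torsors over the surjective homomorphism $\Nm_\pi\colon\Jac(\widetilde{\Gamma})\to\Jac(\Gamma)$. Surjectivity of the latter follows from Corollary \ref{cor_norm=dualtopullback}: $\Nm_\pi$ is dual to the pullback $\pi^\ast\colon\Alb(\Gamma)\to\Alb(\widetilde{\Gamma})$, and $\pi^\ast$ is injective on the underlying real vector spaces because $\pi_\ast\otimes\R\colon H_1(\widetilde{\Gamma},\R)\to H_1(\Gamma,\R)$ is surjective by the explicit bases in Lemma \ref{lemma_spanningtrees}. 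Taking any divisor $D$ of degree $2g-2$ on $\widetilde{\Gamma}$ and correcting by an appropriate class in $\Jac(\widetilde{\Gamma})$, we find a representative mapping to $[K_\Gamma]$.

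Next, once $V$ is non-empty, the difference of any two classes in $V$ lies in $\ker(\Nm_\pi|_{\Pic_0(\widetilde{\Gamma})})$, so $V$ is naturally a torsor over this kernel. The connected components of $V$ are therefore in bijection with those of $\ker(\Nm_\pi)\cap\Pic_0(\widetilde{\Gamma})$.

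Finally, I would invoke Theorem \ref{thm_Prymvariety}: when the dilation cycle is trivial, i.e.\ when $\pi$ is a topological double cover, the kernel has two connected components, and otherwise it is connected. This gives the claimed dichotomy for $V$. The only real work is establishing non-emptiness and the torsor structure; everything else is a formal consequence of Theorem \ref{thm_Prymvariety}, so I do not expect any serious obstacle.
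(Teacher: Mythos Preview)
Your proposal is correct and follows essentially the same approach as the paper: both identify $V$ as a translate (equivalently, a torsor) of $\ker(\Nm_\pi)$ in $\Pic_0(\widetilde{\Gamma})$ and then invoke Theorem~\ref{thm_Prymvariety}. The paper's proof is a two-line remark that $V$ is a translation of the kernel by any preimage of $K_\Gamma$; you are simply more explicit about why such a preimage exists, justifying surjectivity of $\Nm_\pi$ via Lemma~\ref{lemma_spanningtrees} and Corollary~\ref{cor_norm=dualtopullback}, which the paper leaves implicit.
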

\begin{proof}
By  Theorem \ref{thm_Prymvariety} above, the Prym variety $\Pr(\Gamma,\pi)$ consists of two connected components when $\pi$ is a topological cover, and of a single component otherwise. The same is true for $V$, since it is a translation of $\Pr(\Gamma,\pi)$ by any pre-image of $K_{\Gamma}$.
\end{proof}

In  the algebraic case, the two components of the locus correspond to parities of the rank of the divisor classes. The following example shows that the analogous statement may not hold in the tropical setting, even for topological covers.

\begin{example}\label{ex:rank_jump}
Consider the double cover in Figure \ref{fig:rank_jump}. Let $D_n$ be the divisor in red, such that the distance of the upper left (resp. lower right) chip from the upper (resp. lower) vertex is $\frac{1}{n}$. Then each $D_n$ is in the Prym--Brill--Noether locus and their rank is $0$, but the rank of $D=\lim_{n\to\infty} D_n$ is $1$. 
\end{example}

\begin{figure}[h!]
\begin{tikzpicture}

\draw (-2.6,1.2) node  {${\widetilde\Gamma}$};
\draw (-1,1.2) circle (.4);
\draw (-1,0.8)--(1,0.8);
\draw (-1,1.6)--(1,1.6);
\draw (1,1.2) circle (.4);

\draw[fill=red] (-1.35,1.35) circle (.05);
\draw[fill=red] (-.65,1.05) circle (.05);

\draw (-2.6,0) node  {${\Gamma}$};
\draw (-.8,0) circle (.2);
\draw (-0.6,0)--(0.6,0);
\draw (.8,0) circle (.2);

\end{tikzpicture}
\caption{Divisors of rank $0$ converging to a divisor of rank $1$.}
\label{fig:rank_jump}
\end{figure}
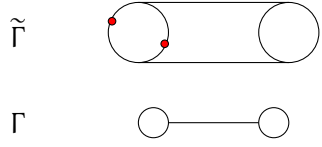

However, we will see in the remainder of this section that the Prym--Brill--Noether locus is well-behaved in the special case  a \emph{folded chain of loops}.

\subsection{Young tableaux and divisors on chains of loops} 
When our metric graph is a so-called \emph{chain of loops}, there is an elegant correspondence between divisor classes and rectangular Young tableau, which we now describe. Throughout, we use the  French style to discuss partitions and Young tableau, as in \cite{Pflueger_kgonalcurves}, rather than the English style that appeared in  \cite[Section 2]{JensenRanganathan_BrillNoetherwithfixedgonality}.
For instance, the $(1,1)$-cell of a Young tableau is in the bottom-left corner, and the cell $(2,1)$ is located one step to the right.  Given an $m\times m$ partition, we refer to the cells with coordinates $(a,b)$ with $a=b$  as the \emph{diagonal}, and those where $a+b=m+1$ as the \emph{anti-diagonal}. For a tableau $t$, we denote $t(a,b)$ the symbol in the cell $(a,b)$. 

We set up some notation. 
Let $\Lambda$ be a chain of $g(\Lambda)$ loops as in Figure \ref{Fig:Chain_of_loops}. 

\begin{figure}[h!]
\begin{tikzpicture}

\begin{scope}[scale=.7]

\draw (-3.5,0.7) node {$\Lambda$};

\draw (-2.7,-.5) node {\tiny $w_0$};
\draw [ball color=black, color=black] (-2.5,-0.3) circle (0.55mm);
\draw (-2.5,-.3)--(-1.93,-0.25);
\draw [ball color=black, color=black] (-1.93,-0.25) circle (0.55mm);
\draw (-1.95,-0.5) node {\tiny $v_1$};
\draw (-1.5,0) circle (0.5);
\draw (-1.5,0) node {\tiny $\lambda_1$};

\draw (-1,0)--(0,0.5);
\draw [ball color=black, color=black] (-1,0) circle (0.55mm);
\draw (-0.85,0.3) node {\tiny $w_1$};
\draw (0.7,0.5) circle (0.7);
\draw (1.4,0.5)--(2,0.3);
\draw [ball color=black, color=black] (1.4,0.5) circle (0.55mm);
\draw [ball color=black, color=black] (0,0.5) circle (0.55mm);
\draw (-0.2,0.75) node {\tiny $v_2$};

\draw (2.6,0.3) circle (0.6);
\draw (2.6,0.3) node {\tiny $\lambda_k$};
\draw [<->] (3.3,0.4) arc[radius = 0.715, start angle=10, end angle=170];
\draw [<->] (3.3,0.2) arc[radius = 0.715, start angle=-9, end angle=-173];
\draw (2.5,1.25) node {\footnotesize$\ell_k$};
\draw (2.75,-0.7) node {\footnotesize$m_k$};

\draw (3.2,0.3)--(3.87,0.6);
\draw [ball color=black, color=black] (2,0.3) circle (0.55mm);
\draw [ball color=black, color=black] (3.2,0.3) circle (0.55mm);
\draw [ball color=black, color=black] (3.87,0.6) circle (0.55mm);
\draw (4.5,0.3) circle (0.7);
\draw (5.16,0.5)--(5.9,0);

\draw (6.1,0) circle (0.2);
\draw [ball color=black, color=black] (5.9,0) circle (0.55mm);
\draw [ball color=black, color=black] (6.3,0) circle (0.55mm);

\draw (6.3,0)--(7.2,0);
\draw [ball color=black, color=black] (7.2,0) circle (0.55mm);
\draw (7.7,0) circle (0.5);
\draw (7.7,0) node {\tiny $\lambda_g$};

\draw [ball color=black, color=black] (8.2,0) circle (0.55mm);
\draw (8.2,0)--(8.5,0.2);
\draw [ball color=black, color=black] (8.5,0.2) circle (0.55mm);
\draw (8.7,0.5) node  {\tiny $v_{g+1}$};

\end{scope}

\end{tikzpicture}
\caption{A chain of $g$ loops}
\label{Fig:Chain_of_loops}
\end{figure}
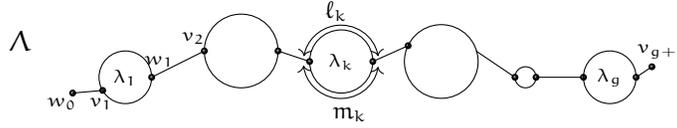

Denote $\ell_i$ and $m_i$ the lengths of the upper and lower arcs  of each loop $\lambda_i$ respectively.   
The \emph{torsion} of the loop is the smallest positive integer $s$ such that $s\cdot m_i$ is an integer multiple of $\ell_i+m_i$. If no such integer exists, then the torsion is $0$. 
From now on, assume for simplicity that $m_i=1$ for each $i$ (this will have no bearing on the results in this paper).
Each loop $\lambda_i$ has a vertex $v_i$ that is closest to $\lambda_{i-1}$ (referred to as the \emph{tail} vertex), and a vertex $w_i$ that is closest to $\lambda_{i+1}$ (referred to as the \emph{head} vertex). Moreover, there is an edge from $v_1$ the a vertex $w_0$ and an edge between $w_g$ to a vertex $v_{g+1}$.

Fix integers $r,d$, and $h=g(\Lambda)-d+r$, and let $\lambda$ be a partition with $r+1$ rows and $h$ columns. A rectangular tableau $t$ on $\lambda$ is called a \emph{displacement tableau} if it is filled with  integers between $1$ and $g(\Lambda)$,   subject to the following condition: if a number $t(a,b)=t(c,d)=n$, and the torsion of the $n$-th loop is $s$, then the  lattice distance between $(a,b)$ and $(c,d)$ cells equals $0\mod s$. 

Such a tableau gives rise to a set $T(t)$ of divisor classes of degree $d$ and rank at least $r$ on $\Lambda$ as follows. 
The location $(a,b)$ of the number $n$ in the tableau specifies where to place a chip on the $n$-th loop. Whenever a number $n\in\big\{1,2,\ldots, g(\Lambda)\big\}$ does not appear in the tableau, the chip may be placed arbitrarily on the loop. 
Otherwise, let $c=b-a$. Then the $n$-th loop will have a chip at distance $c$ counter-clockwise from its head vertex $w_n$. For instance, if $n$ is on the diagonal, then the chip will be on the head vertex, if $n$ is one cell left of the diagonal then the chip will be on the tail vertex $v_{n}$, and if $n$ is one cell to the right, then the chip will be at distance $1$ \emph{counter-clockwise} from the head vertex. Note that this is well defined even if $n$ repeats in the tableau, thanks to the torsion condition.   
Finally, place $d-g(\Lambda)$ chips on $v_{g+1}$ to obtain a divisor of degree $d$ (this number may be either positive or negative). 

According to \cite[Theorem 1.4]{Pflueger_special}, the classes of divisors thus constructed are precisely the divisors of rank $r$ on $\lambda$. Namely
\begin{equation}\label{eq:BN_locus}
W^r_d(\Lambda) = \bigcup_{t\vdash\lambda} T(t),
\end{equation}
where $\lambda$ is a rectangle partition of height $r+1$ and width $d-g+r$. 
Explicit examples of this construction will be given when we specialize to the case of double covers.

\subsection{Folded chains of loops}\label{section_generic2:1chainofloops}
Let $\Gamma$ be the graph obtained from a chain of $g$ loops after removing the vertex $v_{g+1}$ and the edge leading to it.  
We construct a double cover as in Figure \ref{Fig:Folded_chain_of_loops}. Explicitly, denote $\gamma_1,\ldots,\gamma_g$ the loops of $\Gamma$, and let  $\widetilde\Gamma$ be a chain of $2g-1$ loops, denoted $\widetilde{\gamma}_1,\ldots,\widetilde{\gamma}_{2g-1}$. For $i=1,2,\ldots,g-1$, the edge lengths of $\widetilde{\gamma}_i$ and $\widetilde{\gamma}_{2g-i}$ are chosen to equal the edge length of $\gamma_i$.  As for the loop $\widetilde{\gamma}_g$, each of its edges will have the same length as the loop $\gamma_g$. There is a natural double cover $\pi:\widetilde\Gamma\to\Gamma$ by letting $\widetilde{\gamma}_i$ and $\widetilde{\gamma}_{2g-i}$ cover $\gamma_i$  for $i=1\ldots,g-1$, and letting $\widetilde{\gamma}_g$ cover $\gamma_g$ twice. We refer to $\pi$ as a \emph{folded chain of loops}.

In what follows, we are interested in divisors of degree $2g-2$ and rank at least $r$ on $\widetilde\Gamma$. In this case,   $h=g(\widetilde\Gamma)-d+r = 2g-1-(2g-2)+r= r+1$, and such divisors correspond to $(r+1)\times(r+1)$ tableaux. The loop $\widetilde{\gamma}_g$ has torsion $2$, so the number $g$ may repeat in the tableau, as long as the lattice distance between any two occurrences is even.  The torsion of each loop $\widetilde{\gamma}_i$ equals the torsion of $\widetilde{\gamma}_{2g-i}$.

\begin{figure}[h!]
\begin{tikzpicture}[scale=.7,every node/.style={scale=.7}] 

\begin{scope}[shift={(0,2.6)}]

\draw (-3.5,-1.3) node {$\widetilde\Gamma$};

\draw (-2.7,-.5) node {\footnotesize $\widetilde{v}_{2g}$};
\draw [ball color=black, color=black] (-2.5,-0.3) circle (0.55mm);
\draw (-2.5,-.3)--(-1.93,-0.25);
\draw [ball color=black, color=black] (-1.93,-0.25) circle (0.55mm);
\draw (-1.5,0) circle (0.5);
\draw (-1.5,0) node {\tiny $\widetilde{\gamma}_{2g-1}$};

\draw (-1,0)--(0,0.5);
\draw [ball color=black, color=black] (-1,0) circle (0.55mm);
\draw (0.7,0.5) circle (0.7);
\draw (-0.4,0.75) node {\footnotesize $\widetilde{w}_{2g-2}$};

\draw (1.4,0.5)--(2,0.3);
\draw [ball color=black, color=black] (1.4,0.5) circle (0.55mm);
\draw [ball color=black, color=black] (0,0.5) circle (0.55mm);
\draw (2.6,0.3) circle (0.6);
\draw (3.2,0.3)--(3.87,0.6);
\draw [ball color=black, color=black] (2,0.3) circle (0.55mm);
\draw [ball color=black, color=black] (3.2,0.3) circle (0.55mm);
\draw [ball color=black, color=black] (3.87,0.6) circle (0.55mm);
\draw (4.5,0.3) circle (0.7);
\draw (5.16,0.5)--(5.9,0);
\draw (6.4,0) circle (0.5);
\draw [ball color=black, color=black] (5.16,0.5) circle (0.55mm);
\draw (5.48,0.74) node {\footnotesize $\widetilde{v}_{g+2}$};
\draw [ball color=black, color=black] (5.9,0) circle (0.55mm);
\draw [ball color=black, color=black] (6.9,0) circle (0.55mm);
\draw (7.27,.17) node {\footnotesize $\widetilde{v}_{g+1}$};

\draw [<->] (3.3,0.4) arc[radius = 0.715, start angle=10, end angle=170];
\draw [<->] (3.3,0.2) arc[radius = 0.715, start angle=-9, end angle=-173];

\draw (2.5,1.25) node {\footnotesize$\ell_k$};
\draw (2.75,-0.7) node {\footnotesize$m_k$};

\draw (6.9,0) arc[radius = .9, start angle=90, end angle=0];
\draw (7.8,-1.3) circle (0.4);
\draw [ball color=black, color=black] (7.8,-0.9) circle (0.55mm);
\draw [ball color=black, color=black] (7.8,-1.7) circle (0.55mm);

\draw (6.9,-2.6) arc[radius = .9, start angle=-90, end angle=0];
\draw (8.1,-0.8) node  {\footnotesize $\widetilde{w}_g$};
\draw (8.1,-1.85) node  {\footnotesize $\widetilde{v}_g$};
\draw (7.8,-1.3) node  {\tiny $\widetilde{\gamma}_g$};

\end{scope}

\begin{scope}
\draw (-2.7,-.5) node {\footnotesize $\widetilde{w}_0$};
\draw [ball color=black, color=black] (-2.5,-0.3) circle (0.55mm);
\draw (-2.5,-.3)--(-1.93,-0.25);
\draw [ball color=black, color=black] (-1.93,-0.25) circle (0.55mm);
\draw (-1.95,-0.5) node {\footnotesize $\widetilde{v}_1$};
\draw (-1.5,0) circle (0.5);
\draw (-1.5,0) node {\tiny $\widetilde{\gamma}_{1}$};

\draw (-1,0)--(0,0.5);
\draw [ball color=black, color=black] (-1,0) circle (0.55mm);
\draw (-0.85,0.3) node {\footnotesize $\widetilde{w}_1$};
\draw (0.7,0.5) circle (0.7);
\draw (1.4,0.5)--(2,0.3);
\draw [ball color=black, color=black] (1.4,0.5) circle (0.55mm);
\draw [ball color=black, color=black] (0,0.5) circle (0.55mm);
\draw (-0.2,0.75) node {\footnotesize $\widetilde{v}_2$};
\draw (2.6,0.3) circle (0.6);
\draw (3.2,0.3)--(3.87,0.6);
\draw [ball color=black, color=black] (2,0.3) circle (0.55mm);
\draw [ball color=black, color=black] (3.2,0.3) circle (0.55mm);
\draw [ball color=black, color=black] (3.87,0.6) circle (0.55mm);
\draw (4.5,0.3) circle (0.7);
\draw (5.16,0.5)--(5.9,0);
\draw (6.4,0) circle (0.5);
\draw [ball color=black, color=black] (5.16,0.5) circle (0.55mm);
\draw [ball color=black, color=black] (5.9,0) circle (0.55mm);
\draw [ball color=black, color=black] (6.9,0) circle (0.55mm);
\draw (5.6,-.2) node {\footnotesize $\widetilde{v}_{g-1}$};
\draw (7.3,-.2) node {\footnotesize $\widetilde{w}_{g-1}$};

\draw [<->] (3.3,0.4) arc[radius = 0.715, start angle=10, end angle=170];
\draw [<->] (3.3,0.2) arc[radius = 0.715, start angle=-9, end angle=-173];

\draw (2.5,1.25) node {\footnotesize$\ell_k$};
\draw (2.75,-0.7) node {\footnotesize$m_k$};
\end{scope}

\begin{scope}[shift={(0,-3)}]

\draw (-3.5,0) node {$\Gamma$};

\draw (-2.7,-.5) node {\footnotesize $w_0$};
\draw [ball color=black, color=black] (-2.5,-0.3) circle (0.55mm);
\draw (-2.5,-.3)--(-1.93,-0.25);
\draw [ball color=black, color=black] (-1.93,-0.25) circle (0.55mm);
\draw (-1.95,-0.5) node {\footnotesize $v_1$};
\draw (-1.5,0) circle (0.5);
\draw (-1.5,0) node {\footnotesize $\gamma_1$};

\draw (-1,0)--(0,0.5);
\draw [ball color=black, color=black] (-1,0) circle (0.55mm);
\draw (-0.85,0.3) node {\footnotesize $w_1$};
\draw (0.7,0.5) circle (0.7);
\draw (1.4,0.5)--(2,0.3);
\draw [ball color=black, color=black] (1.4,0.5) circle (0.55mm);
\draw [ball color=black, color=black] (0,0.5) circle (0.55mm);
\draw (-0.2,0.75) node {\footnotesize $v_2$};
\draw (2.6,0.3) circle (0.6);
\draw (3.2,0.3)--(3.87,0.6);
\draw [ball color=black, color=black] (2,0.3) circle (0.55mm);
\draw [ball color=black, color=black] (3.2,0.3) circle (0.55mm);
\draw [ball color=black, color=black] (3.87,0.6) circle (0.55mm);
\draw (4.5,0.3) circle (0.7);
\draw (5.16,0.5)--(5.9,0);
\draw (6.4,0) circle (0.5);
\draw [ball color=black, color=black] (5.16,0.5) circle (0.55mm);
\draw [ball color=black, color=black] (5.9,0) circle (0.55mm);
\draw [ball color=black, color=black] (6.9,0) circle (0.55mm);
\draw (7.27,-.2) node {\footnotesize $w_{g-1}$};
\draw (6.4,0) node {\tiny $\gamma_{g-1}$};

\draw (6.9,0)--(7.8,0);
\draw [ball color=black, color=black] (7.8,0) circle (0.55mm);
\draw (8,0) circle (0.2);
\draw (7.65,0.2) node  {\footnotesize $v_g$};

\draw [<->] (3.3,0.4) arc[radius = 0.715, start angle=10, end angle=170];
\draw [<->] (3.3,0.2) arc[radius = 0.715, start angle=-9, end angle=-173];

\draw (2.5,1.25) node {\footnotesize$\ell_k$};
\draw (2.75,-0.7) node {\footnotesize$m_k$};
\end{scope}

\draw [->, thick] (9,0.5)--(9,-2);
\draw [right] (9,-.75) node {\footnotesize $\pi$};

\end{tikzpicture}
\caption{A folded chain of $2g-1$ loops double covering a chain of $g$ loops.}
\label{Fig:Folded_chain_of_loops}
\end{figure}
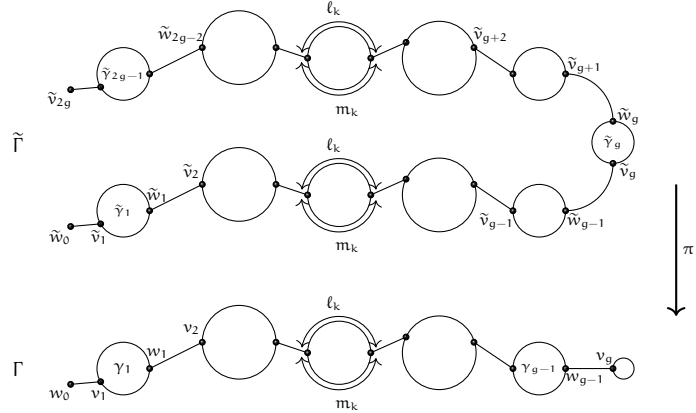

Note that $d-g(\widetilde\Gamma) = -1$,  so a divisor constructed as above  is effective everywhere, apart from a pole at $\widetilde{w}_{2g-1}$. Its image is a divisor on $\Gamma$ that has pole at $v_0$ and two chips on each loop, except for $\gamma_g$ where it has a single chip. 
Using the following lemma, we can describe the subset of these divisors that map down by $\pi_*$ to  $K_\Gamma$.

\begin{lemma}\label{lem:equivalenToCanonical}
Let $D$ be a divisor of degree $2g-2$ on $\Gamma$ such that $D+w_0$  is effective, $\deg{D|_{\gamma_i}} = 2$ for $1\leq i <g$, and $\deg{D|_{\gamma_i}} = 1$ for $i=g$. Then $D$ is equivalent to $K_{\Gamma}$  if and only if $D|_{\gamma_i}= p_i+q_i$, where the counter clockwise distance of $p_i$ from $v_i$ equals the clockwise distance of $q_i$ from $w_i$ for $1\leq i\leq g-1$, and $D_{\gamma_g} = v_{g-1}$. 
\end{lemma}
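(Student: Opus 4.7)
The plan is to reduce the equivalence $D \sim K_\Gamma$ to one condition per loop, via the decomposition of $\Jac(\Gamma)$ coming from the cycles of the chain. A direct valence computation first gives $K_\Gamma = -w_0 + \sum_{i=1}^{g-1}(v_i + w_i) + v_g$; under the hypotheses on $D$, the divisor $D_0 := D - K_\Gamma$ is supported on $\gamma_1 \cup \cdots \cup \gamma_g$ and splits as $D_0 = \sum_{i<g}(p_i + q_i - v_i - w_i) + (D|_{\gamma_g} - v_g)$, a sum of divisors each supported on a single loop.

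For the direction $(\Leftarrow)$, I would exhibit an explicit rational function realising $D_0$. In the symmetric configuration on $\gamma_i$ with $p_i,q_i$ both on the upper arc at common distance $a$ from $v_i$ and from $w_i$ respectively, I take $f_i$ to be zero off $\gamma_i$ and on the lower arc, and on the upper arc to have slopes $-1, 0, +1$ on the three consecutive segments $[v_i,p_i]$, $[p_i,q_i]$, $[q_i,w_i]$. The symmetry condition is precisely what forces $f_i(w_i) = -a + a = 0$, so that $f_i$ glues continuously to $0$ outside $\gamma_i$, and a direct slope check at each point gives $\div f_i = p_i + q_i - v_i - w_i$. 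The cases where $p_i, q_i$ lie on the lower arc or coincide with one of the vertices are handled by the symmetric recipe. Summing the $f_i$ for $i<g$, and using $D|_{\gamma_g} = v_g$ to kill the last summand of $D_0$, produces a rational function on $\Gamma$ whose divisor is $D_0$.

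For the direction $(\Rightarrow)$, I would exploit the fact that the cycles $[\gamma_1], \ldots, [\gamma_g]$ form an orthogonal basis of $H_1(\Gamma, \Z)$ for the edge-length pairing \eqref{eq_edgelengthpairing}, which yields a canonical isomorphism $\Jac(\Gamma) \cong \prod_{i=1}^g \R/L_i\Z$ with $L_i := \ell_i + m_i$. Under the tropical Abel--Jacobi theorem, the class of each summand $p_i + q_i - v_i - w_i$ has trivial coordinates away from the $i$-th factor, and its $i$-th coordinate, after parametrising $\gamma_i$ as a circle of length $L_i$ with $v_i$ at position $0$ and $w_i$ at position $\ell_i$, equals $\alpha_i + \beta_i - \ell_i \pmod{L_i}$, where $\alpha_i, \beta_i$ are the positions of $p_i, q_i$; analogously, the $g$-th coordinate of $[D|_{\gamma_g} - v_g]$ is simply the position of the single chip on $\gamma_g$. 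Vanishing of all coordinates forces $D|_{\gamma_g} = v_g$ and $\alpha_i + \beta_i \equiv \ell_i \pmod{L_i}$; since the left-hand side differs from $\ell_i$ by less than $L_i$ in absolute value, the congruence is a genuine equality, which unwinds to the symmetry between the counter-clockwise distance of $p_i$ from $v_i$ and the clockwise distance of $q_i$ from $w_i$ asserted in the statement.

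The main obstacle is the clean identification of the $i$-th coordinate of $[D_0]$ in the product decomposition with the elementary position sum on $\gamma_i$, together with the bookkeeping needed when a chip sits at $v_i$ or $w_i$ so that $D_0|_{\gamma_i}$ has support only partially on the loop. Once this identification is in hand, each loop contributes an independent one-dimensional condition, and the equivalence collapses loop-by-loop to the stated geometric criterion.
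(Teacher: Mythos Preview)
Your approach is sound and genuinely different from the paper's. The paper argues the necessity direction by writing $K_\Gamma = -w_0 + L + R$ with $L=\sum v_i$ and $R=\sum w_i$, then observing that $D-(R-w_0)$ chip-fires to an effective divisor $D'$ with exactly one chip per loop; since such divisors are \emph{unique} in their linear equivalence class on a chain of loops, $D'=L$ on the nose, and the symmetry condition drops out. This is a purely combinatorial reduction in the spirit of the chain-of-loops literature. Your route via the product decomposition $\Jac(\Gamma)\cong\prod_i \R/L_i\Z$ coming from the orthogonal cycle basis is more structural and arguably cleaner once the Abel--Jacobi bookkeeping is in place; it also makes transparent \emph{why} the conditions decouple loop-by-loop, rather than relying on a reduction algorithm.

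One correction: your final step, ``since the left-hand side differs from $\ell_i$ by less than $L_i$ in absolute value, the congruence is a genuine equality,'' is false as stated (take $\alpha_i,\beta_i$ both close to $L_i$). Fortunately it is also unnecessary. The lemma's condition, read correctly, \emph{is} the congruence $\alpha_i+\beta_i\equiv\ell_i\pmod{L_i}$: the counter-clockwise distance of $p_i$ from $v_i$ is $\alpha_i$, and the clockwise distance of $q_i$ from $w_i$ is $(\ell_i-\beta_i)\bmod L_i$, so equality of these two quantities in $[0,L_i)$ is exactly the congruence. The two integer lifts $\alpha_i+\beta_i=\ell_i$ and $\alpha_i+\beta_i=\ell_i+L_i$ correspond to both chips lying on the upper arc or both on the lower arc, respectively, and either satisfies the stated symmetry. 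So drop the ``genuine equality'' sentence and conclude directly from the congruence.
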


\begin{proof}
It's straightforward to see that the condition is sufficient. To see that it is also necessary, write $K_\Gamma =-w_0 + L + R$, where $L = v_1+\ldots+v_{g-1}$ consists of a chip at the left vertex of every loop,  and $R = w_1 + \ldots + w_{g-1}$ consists of a chip at the right vertex of every loop. Denote $D|_{\gamma_i} = p_i+q_i$ for $i=1\ldots,g-1$ and $D|_{\gamma_g} = p_g$. Then  $D-(R-w_0)$ is equivalent to an effective divisor $D'$, by moving $p_i$ to $v_i$, and moving $q_i$ an equal distance in the opposite direction. Note that $D'$ has a single chip on each loop. Since $D\simeq K_{\Gamma}$ it follows that $D'\simeq L$. But divisors with a single chip on each loop are uniquely determined by the position of the chip, so $D'=L$ on the nose. 
It follows that $p_g$ is on the vertex of $\gamma_g$, and the clockwise distance of $q_i$ from $w_i$ equals the counter-clockwise of $p_i$ from $v_i$ for every other loop. 
\end{proof}



Next, we wish to describe the structure of the Prym--Brill--Noether locus for a chain of loops. 
As we noted before, the locus consists of two connected components. Each divisor class in $V(\widetilde\Gamma)$ has a unique representative with a single chip on each loop, and $-1$ chips on $\widetilde{w}_{2g-1}$.
The counter-clockwise distance of the chip on the $n$-loop from $w_n$ is denoted by  $\langle\xi\rangle_n$. 
By Lemma \ref{lem:equivalenToCanonical} the divisor is Prym if and only if this representative is symmetric in the sense that $\langle\xi\rangle_i = \langle\xi\rangle_{2g-i}$ for each $i\neq g$, and $\langle\xi\rangle_g\in \{0,1\}$ (that is, the chip on $\gamma_g$ will either be on $\widetilde{v}_g$ or $\widetilde{w}_g$). 

\begin{definition}
A Prym divisor is said to be \emph{odd} if $\langle\xi\rangle_g=0$ and \emph{even} if $\langle\xi\rangle_g=1$.
\end{definition}

\begin{proposition}
The Prym--Brill--Noether locus $V(\Gamma,\pi)$ has two connected components. One consists of even divisors, and the other consists of odd divisors.
\end{proposition}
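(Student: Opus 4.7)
The plan is to combine Proposition \ref{prop:PBN_components} with a continuity analysis of the parity invariant. First, I would observe that the folded chain of loops $\pi\colon\widetilde\Gamma\to\Gamma$ is a topological double cover: the loop $\widetilde\gamma_g$ has total circumference $2(\ell_g+m_g)$ and wraps twice around $\gamma_g$, so the map has local dilation factor $1$ at every point and the dilation cycle is empty. Proposition \ref{prop:PBN_components} then immediately gives that $V=V(\Gamma,\pi)$ has exactly two connected components, and it only remains to identify them as the even and odd loci.

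Next, I would show that the parity function $\varepsilon\colon V\to\{0,1\}$, $[D]\mapsto \langle\xi\rangle_g$, is locally constant, where $\xi$ denotes the symmetric canonical representative of $[D]$ produced by Lemma \ref{lem:equivalenToCanonical}. Given a continuous path $[D_t]$ in $V$, the chip positions $\langle\xi_t\rangle_n$ in the canonical form vary continuously in $t$ on each loop; this reduces to the continuous dependence on $[D_t]$ of the standard chain-of-loops normal form (a divisor with one chip per loop plus $-1$ chips at $\widetilde w_{2g-1}$), which follows from the Dhar-type chip-firing reduction algorithm. For $n\neq g$ the Prym condition imposes only the symmetry $\langle\xi_t\rangle_n=\langle\xi_t\rangle_{2g-n}$ and the chips may vary freely around their loops, but Lemma \ref{lem:equivalenToCanonical} forces $\langle\xi_t\rangle_g\in\{0,1\}$, a discrete set. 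Continuity of the path then pins $\varepsilon([D_t])$ to a constant value, so $\varepsilon$ is locally, and hence globally, constant on each connected component of $V$.

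Finally, I would exhibit one even and one odd Prym divisor explicitly: place the chip on $\widetilde\gamma_g$ at $\widetilde w_g$ (resp.\ at $\widetilde v_g$) and make any symmetric choice of chip positions $\langle\xi\rangle_n=\langle\xi\rangle_{2g-n}$ on the remaining loops, say all at the tail vertices $\widetilde v_n$. Combining the three steps, $V=\varepsilon^{-1}(0)\sqcup\varepsilon^{-1}(1)$ is a partition into two non-empty clopen pieces of a two-component space, so each $\varepsilon^{-1}(i)$ is precisely one of the connected components, as claimed.

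The main obstacle I anticipate is formally justifying the continuous dependence of the symmetric canonical representative on $[D]\in V$. One route is to realize this representative as the unique $\widetilde{w}_{2g-1}$-reduced divisor in each class of the relevant degree and invoke continuity of Dhar's algorithm on $\Pic$. A more structural alternative is to use that $V$ is a torsor under $\Pr(\Gamma,\pi)$ and to express $\varepsilon$ as the restriction to $V$ of the winding coordinate on $\widetilde\gamma_g$, viewed as a character $\Pic_{2g-2}(\widetilde\Gamma)\to\R/\Z$ whose further restriction to $\Pr(\Gamma,\pi)$ lands in $\{0,1\}$ and surjects onto the two-element component group produced by Theorem \ref{thm_Prymvariety}; this bypasses the normal-form discussion entirely.
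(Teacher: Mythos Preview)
Your proposal is correct and follows essentially the same approach as the paper: invoke Proposition \ref{prop:PBN_components} for the two components, then show the parity $\langle\xi\rangle_g$ is locally constant on $V$ because the coordinate map is continuous and takes values in the discrete set $\{0,1\}$. The paper resolves your stated ``main obstacle'' in one stroke by citing \cite[Lemma 3.3]{Pflueger_special}, which asserts that the coordinate map $\Pic(\widetilde\Gamma)\to(\R/\Z)^{2g-1}$, $[D]\mapsto(\langle\xi\rangle_i)_i$, is a continuous bijection; projecting to the $g$-th factor then gives exactly the continuity you need without any path or Dhar-type argument.
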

\begin{proof}
Since $\pi$ is a topological double cover, Proposition \ref{prop:PBN_components} implies that $V$ has two connected components, and it remains to show that they correspond to parity. 
The map $f\colon\Pic_{2g-1}(\widetilde\Gamma)\to (\RR/\ZZ)^{2g-1}$ which sends a divisor class $[D]$ to its coordinates $\langle\xi\rangle_i$ (for $ i=1,\ldots,{2g-1}$) defines a continuous bijection \cite[Lemma 3.3]{Pflueger_special}. Therefore, the map obtained by restricting $f$ to $V$ and composing with the projection onto the $g^{th}$ coordinate is continuous as well. It follows that the odd and even divisors must be in separate connected components of $V$. 
 \end{proof}
 
\subsection{Parity and rank} Denote by  $V^{-1} = V^{\text{even}}$ the set of even divisors, and $V^0 = V^{\text{odd}}$ the set of odd divisors. In this section we are going to show that 
odd Prym divisors always have even rank, whereas even Prym divisors have odd rank. The reason for this confusing terminology is that classically, the parity refers to $h^0=r+1$.

\begin{definition}
A displacement tableau $t$ on a rectangular partition  is said to be \emph{Prym} if $a-b\equiv c-d\mod s$ whenever $n=t(a,b) = 2g-t(c,d)$, and the torsion of the $n$-th loop is $s$. 
\end{definition}

Let $\epsilon\in\{-1,0\}$.
The  \emph{Prym--Brill--Noether cell} corresponding to $t$, denoted $P_{\epsilon}(t)$, is the subset of $T(t)$ consisting of  Prym divisors of parity $\epsilon$.  That is,
$P_\epsilon(t)$ is the subset of $T(t)$ of divisors whose coordinates satisfy $\langle\xi\rangle_i = \langle\xi\rangle_{2g-i}$ for each $i\neq g$ and $\langle\xi\rangle_g = \epsilon$. Moreover, denote $P(t) = P_{-1}(t)\cup P_{0}(t)$.
Note that if $t(a,b)=g$ and $b-a\not\equiv \epsilon\mod 2$, then $P_\epsilon(t)=\emptyset$, but if $g\notin t$ then both $P_{-1}(t)$ and $P_0(t)$ may be non-empty.

As the next example shows, even if $t$ is of length $r+1$, it is possible for $P(t)$ to only consist of divisors of rank strictly greater than $r$. 
\begin{example}\label{ex:higher_rank}
Let $g=4$, and consider the Prym tableau 
\begin{center}
t = \begin{tabular}{ |c|c| } 
 \hline
 3 & 4 \\ \hline
 1 & 2\\ 
 \hline
\end{tabular}.
\end{center}
Since the tableau has length $2$, one might be inclined to think that $P_0(t)$ consists of divisors of rank $1$. However, we claim that all the  divisors in $P(t)$ have rank at least $2$. To see that, note that the Prym tableau 
\begin{center}
s = \begin{tabular}{ |c|c|c| } 
 \hline
 4 & 5 & 7 \\ \hline
 3 & 4 & 6 \\ \hline
 1 & 2 & 4 \\ 
 \hline
\end{tabular}
\end{center}
imposes the same conditions on Prym divisors as $t$, so $P(s)=P(t)$. But every divisor in $P(s)$ has rank at least $2$ because $s$ is defined on a partition of length $3$.
\end{example}

For the rest of this section, we describe a method for constructing the tableau $s$ from $t$ as in Example \ref{ex:higher_rank}. 
We begin by recalling a useful construction from \cite[Section 2]{Pflueger_special}.

\begin{definition}\label{def:upwards_displacement}
Let $\lambda$ be a partition, and let $S$ be a set of integers. The \emph{upwards displacement} of $\lambda$ by $S$ is the partition
\[
\text{disp}^+(\lambda,S) = \lambda\cup L,
\]
where $L$ is the set of boxes $(x,y)\notin \lambda$, such that $x-y\in S$ and $(x,y-1),(x-1,y)\in \overline{\lambda}$ (where $\overline{\lambda} =  \lambda\cup (0\times\ZZ)\cup(\ZZ\times 0$)). 
\end{definition}

\begin{example} Suppose that 
$\lambda$ is the partition on the left of Figure \ref{fig:partition_and_displacement}, and $S=2+3\ZZ$. Then $\text{disp}^+(\lambda,S)$ is the partition on the right.

\begin{figure}[h]
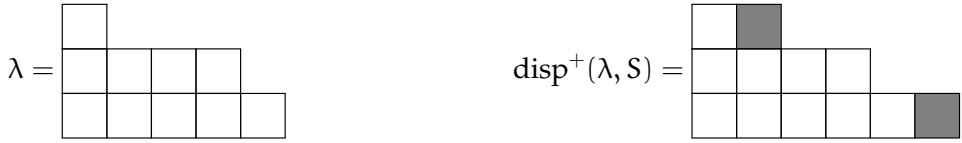

\centering
  \begin{minipage}{.4\textwidth}
   \ytableausetup{centertableaux}
$\lambda =
\begin{ytableau}
 \\
&&&\\
&&&&\\
\end{ytableau} $
  \label{fig:test1}
  \end{minipage}
  \begin{minipage}{.4\textwidth}
\ytableausetup{centertableaux}
$\text{disp}^+(\lambda,S) = 
\begin{ytableau}
\ & *(gray) \\
&&&  \\
&&&&& *(gray) \\
\end{ytableau}$
  \label{fig:test2}
  \end{minipage}

\caption{A partition $\lambda$ and its upwards displacement}
\label{fig:partition_and_displacement}
\end{figure}

\end{example}

We now define a sequence of partitions and tableaux corresponding to the symbols between $1$ and $2g-2$.
Suppose that $t(a,b) = n$, and assume that the torsion of the $n$-th loop is $s$. Denote 
\[
S_{2g-n} = S_n = (b-a) + s\ZZ.
\]
If neither $n$ nor $2g-n$ appear in $t$, we set $S_n$ and $S_{2g-n}$ to be empty. Note that this is well-defined since $t$ was assumed to be a Prym tableau. 

Now, let $t$ be a tableau on a partition $\lambda$. Define the following partitions by induction. 
\[
\lambda_0 = \emptyset,
\]
\[
\lambda_{i+1} = \text{disp}^+(\lambda_i,S_{i+1}).
\]
Define a corresponding tableau $t_{i+1}$ by filling the cells of $\lambda_{i+1}\setminus\lambda_i$  with the value $i+1$.

By construction, every square tableau contained in $t_{i}$ is Prym. Moreover, during the construction of each $t_{i}$, we do not add any new conditions on  divisors, so $ P(t) \subseteq P(t_{2g-1})$. See Example \ref{ex:sequence_of_displacements} below for a demonstration of this process.
Given a tableaux $t$, we define its \emph{dual} tableau $t^*$ by $t^*(a,b) = 2g- t(r+2-b,r+2-a)$. By construction,  we have $t_n(t^*) = t_n(t)$ for every $n$.

\begin{lemma}\label{lem:monotonicity} 
Let $n\leq g$. If $t(a,b)=n$, then $\lambda_n$ contains the cell $(a,b)$. 
If $t(a,b) = 2g-n$ then $\lambda_n$ contains $(r+2-b,r+2-a)$. In particular, if $t(a,b)=2g-n$ and $(a,b)$ is below or at the anti-diagonal, then $\lambda_{n}$ contains $(a,b)$. 
\end{lemma}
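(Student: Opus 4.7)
The plan is to prove the first claim by induction on $n$, deduce the second claim from it by passing to the dual tableau $t^*$, and then obtain the ``in particular'' statement from the downward-closure of Young diagrams.

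For the first claim, I would induct on $n$. Assume inductively that whenever $t(a',b')=m<n$, one has $(a',b')\in\lambda_m\subseteq\lambda_{n-1}$. Suppose $t(a,b)=n\le g$. If $(a,b)\in\lambda_{n-1}$ already, there is nothing to prove; otherwise, because $t$ is a displacement tableau on a rectangle (so rows and columns strictly increase), the cells $(a-1,b)$ and $(a,b-1)$ have tableau values strictly less than $n$, hence lie in $\lambda_{n-1}$ by induction. When $a=1$ or $b=1$ the corresponding boundary cell lies in $(0\times\ZZ)\cup(\ZZ\times 0)\subseteq\overline{\lambda_{n-1}}$ automatically. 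Since the diagonal of $(a,b)$ belongs to $S_n$ by construction, both conditions of Definition~\ref{def:upwards_displacement} are satisfied, and $(a,b)$ is added to $\lambda_n$.

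The second claim follows from the first by duality. The observation just above the lemma gives $\lambda_n(t^*)=\lambda_n(t)$ for every $n$. If $t(a,b)=2g-n$, set $(a',b')=(r+2-b,r+2-a)$, so that $t^*(a',b')=2g-t(a,b)=n$. Since $t^*$ is again a Prym tableau of the same shape and $n\le g$, the first claim applied to $t^*$ yields $(a',b')\in\lambda_n(t^*)=\lambda_n(t)$, as desired. For the ``in particular'' statement, observe that $a+b\le r+2$ implies $a\le r+2-b$ and $b\le r+2-a$, so $(a,b)\le (r+2-b,r+2-a)$ componentwise; since Young diagrams are downward-closed, $(a,b)\in\lambda_n$ as well.

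The main point requiring care is the inductive step in the first claim. One needs the strict monotonicity along rows and columns in a rectangular displacement tableau to force $t(a-1,b), t(a,b-1)<n$, and one must handle separately the possibility that $(a,b)$ already lies in $\lambda_{n-1}$ (which makes the inclusion automatic, with no need to invoke the displacement rule). Everything else is routine bookkeeping around diagonal content and the duality identity $t_n(t^*)=t_n(t)$.
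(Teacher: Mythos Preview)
Your proof is correct and follows essentially the same approach as the paper's: induction on $n$ for the first claim, then duality via $t^*$ and the identity $\lambda_n(t^*)=\lambda_n(t)$ for the second. You are slightly more careful than the paper in handling the boundary cells $a=1$ or $b=1$, the possibility that $(a,b)$ already lies in $\lambda_{n-1}$, and in spelling out the downward-closure argument for the ``in particular'' clause, but the substance is identical.
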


\begin{proof}
We prove the first part by induction. If $n_0\leq g$ is the lowest symbol appearing in the tableau, then $t(1,1) = n_0$, and  $t_{n_0}$ contains $(1,1)$. Assume now that the claim is true for every $i=1,\ldots,n_0$ that appears in $t$, and suppose that $t(a,b)=n$.  Since $t(a-1,b)$ and $t(a,b-1)$ are both $<n$, it follows from induction that $\lambda_{n-1}$ contains both $(a-1,b)$ and $(a,b-1)$. This implies the claim.
For the second part, if $t(a,b) = 2g-n$, then by the definition of the dual tableau, $t^*(r+2-b,r+2-a) = n$. From the first part, it follows that $\lambda_n(t^*)$ contains $(r+2-b,r+2-a)$. But $\lambda_n(t^*) = \lambda_n(t)$ for every $n$, so the claim follows.
\end{proof}

When a Prym divisor is described by a tableau, its parity is determined by the position of $g$ in the tableau (if it appears), and its rank is bounded from below by the length of the tableau minus $1$.  
Therefore, the main obstacle in proving that the parity of a divisor coincides with the parity of its rank shows up when the position of $g$ does not match the length. However, as the following proposition shows, in that situation, the rank of the corresponding divisors is higher than predicted by the length.

\begin{proposition}\label{prop:larger_tableau}
Let $t$ be a tableau of length $r+1$, and let $D\in P_\epsilon(t)$. If $\epsilon\not\equiv r\mod 2$, then $r(D)>r$. 
\end{proposition}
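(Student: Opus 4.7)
Proof Plan.

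The plan is to construct, from the data $(t, \epsilon)$, a Prym displacement tableau $s$ of length $r+2$ on an $(r+2) \times (r+2)$ square such that $D \in T(s)$. Once this is achieved, Pflueger's theorem \eqref{eq:BN_locus} applied to rank $r+1$ yields $D \in W^{r+1}_{2g-2}(\widetilde{\Gamma})$, so that $r(D) \geq r + 1 > r$.

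First I would construct $s$ using the iterated upward displacement of Definition \ref{def:upwards_displacement}, following the pattern illustrated in Example \ref{ex:higher_rank}. Starting from $\lambda_0 = \emptyset$ and displacing successively by $S_1, \ldots, S_{2g-1}$ produces the sequence $\lambda_0 \subseteq \lambda_1 \subseteq \cdots \subseteq \lambda_{2g-1}$ together with a Prym displacement tableau $t_{2g-1}$ on $\lambda_{2g-1}$. By construction, $t_{2g-1}$ encodes offset conditions that are satisfied by every divisor in $P_\epsilon(t)$, giving the inclusion $P_\epsilon(t) \subseteq T(t_{2g-1})$. In the edge case $g \notin t$, I would extend the definition by setting $S_g = \epsilon + 2\mathbb{Z}$, which is consistent with the constraint $\langle \xi\rangle_g = \epsilon$ imposed on divisors in $P_\epsilon(t)$.

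The key technical claim is that $\lambda_{2g-1}$ contains the full $(r+2) \times (r+2)$ square, which would let me define $s$ as the restriction of $t_{2g-1}$ to this square. Here the parity hypothesis $\epsilon \not\equiv r \pmod 2$ is essential: the anti-diagonal of the $(r+2) \times (r+2)$ square consists of cells $(a, b)$ with $a + b = r + 3$, whose offsets $a-b \in \{-(r+1), -(r-1), \ldots, r+1\}$ all have parity $r + 1 \equiv \epsilon \pmod 2$. Since $S_g$ contains precisely these offsets (and only these, modulo the torsion $2$ of loop $g$), step $g$ of the displacement adds the entire anti-diagonal of the larger square, provided that the cells immediately below and to the left are already in $\lambda_{g-1}$. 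This adjacency follows from combining Lemma \ref{lem:monotonicity} with the Prym symmetry of $t$ (which, via $S_n = S_{2g-n}$, guarantees that sufficiently many cells near the anti-diagonal of the $(r+1) \times (r+1)$ square have been added before step $g$). Once the anti-diagonal of the $(r+2) \times (r+2)$ square lies in $\lambda_g$, the remaining cells in row $r+2$ and column $r+2$ get filled during steps $g+1, \ldots, 2g-1$, which replay the offsets of steps $g-1, \ldots, 1$ via the duality $S_{g+i} = S_{g-i}$.

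The main obstacle is the careful verification of the displacement's adjacency conditions at each step; the argument relies crucially on the Prym symmetry of $t$ to supply the necessary cells below the anti-diagonal of the $(r+2) \times (r+2)$ square before step $g$. The parity mismatch is exactly what unlocks the extra row and column: when $\epsilon \equiv r \pmod 2$ the offsets in $S_g$ match the anti-diagonal of the smaller $(r+1) \times (r+1)$ square (where no new cells outside the original square can be created), whereas when $\epsilon \not\equiv r \pmod 2$ they match the anti-diagonal of the larger square, forcing the displacement to extend by one row and one column and producing the tableau $s$ of length $r+2$.
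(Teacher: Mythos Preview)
Your proof plan is correct and follows essentially the same route as the paper: build the sequence $\lambda_0\subseteq\cdots\subseteq\lambda_{2g-1}$ via iterated upward displacement, use Lemma~\ref{lem:monotonicity} together with the Prym symmetry to show that $\lambda_{g-1}$ already contains everything on or below the anti-diagonal of the $(r+1)\times(r+1)$ square, invoke the parity hypothesis $\epsilon\equiv r+1\pmod 2$ so that step $g$ adds the full anti-diagonal of the $(r+2)\times(r+2)$ square (including the new corner cells $(r+2,1)$ and $(1,r+2)$), and then use the duality $S_{g+i}=S_{g-i}$ to fill the remaining upper triangle by step $2g-1$. Your explicit treatment of the edge case $g\notin t$ by setting $S_g=\epsilon+2\ZZ$ is a worthwhile addition that the paper leaves implicit.
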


\begin{proof}
It suffices to show that $D\in P(s)$ for some tableau $s$ of length strictly greater than $r+1$.
Let $t(a,b) = n$ or $t(a,b)=2g-n$ with $(a,b)$ below or at the anti-diagonal. Lemma \ref{lem:monotonicity} implies that  $\lambda_n$ contains $(a,b)$. By the assumption that $\epsilon\not\equiv r\mod{2}$, together with the fact that $P_{\epsilon}\neq\emptyset$, it follows that the symbol $g$ may only appear at a cell $(a,b)$ if $b-a\equiv (r+1)\mod 2$. In particular, the anti-diagonal only contains symbols 
 $n$ or $2g-n$ with $n>0$. Lemma \ref{lem:monotonicity} now implies that  $\lambda_{g-1}$ contains all the cells that are on or below  the anti-diagonal of $\lambda$. All the cells located just above or to the right of the anti-diagonal are of the form $(a,b)$ with $a+b=r+3$, and in particular, satisfy $b-a\not\equiv (r+1)\mod 2$, so $\lambda_g$ contains all of them, including the cells  $(r+2,1)$ and $(1,r+2)$, which did not appear in $\lambda$.

The proof will be complete once we show that $\lambda_{2g-1}$ contains the entire square of length $r+2$.
To that end, we show by induction that for all $k\geq 0$,  if the symbol appears in the $(a,b)$ cell of $t_g$, then $\lambda_{g+k}$  contains the cell $(r+3-b,r+3-a)$. 
Indeed, if $a+b \geq r+3$, then $(r+3-b,r+3-a)$ is in $\lambda_g$, and  there is nothing to prove. Otherwise, both $(a,b+1)$ and $(a+1,b)$ appear in $\lambda_g$, and   since both $t_g(a+1,b)$ and $t_g(a,b+1)$ are larger than $t_g(a,b)$, it follows from induction that $\lambda_{g+{k-1}}$ contains both $(r+3-b-1,r+3-a)$ and $(r+3-b,r+3-a-1)$. Therefore,   $\lambda_{g+k}$ contains $(r+3-b,r+3-a)$. 
Since $t_g(1,1)$ is smaller or equal $g$, it follows that $\lambda_{2g-1}$ contains $(r+2,r+2)$.

\end{proof}

\begin{example}\label{ex:sequence_of_displacements}
If $t$ is the tableau from Example \ref{ex:higher_rank}, then $\lambda_{2g-1}=\lambda_7$ is precisely the tableau $s$ from the same example.

On the other hand, suppose that $g=9$, and 
\begin{center}
t = \begin{tabular}{ |c|c|c| } 
 \hline
 7 & 8 & 9 \\ \hline
 4 & 5 & 6 \\ \hline
 1 & 2 & 3 \\ 
 \hline
\end{tabular}
\end{center}
In this case, $\lambda_{2g-1}(t)=\lambda_{17}(t)=t$, and the construction does not provide any new information. This does not contradict Prop \ref{prop:larger_tableau}, because the parity of the length of $t$ matches up with the parity of the position of $g=9$.
\end{example}

\begin{theorem}\label{thm:parity}
Let $D$ be a divisor in $V^{\epsilon}$, where $\epsilon\in\{-1,0\}$. Then $r(D)\equiv \epsilon\mod 2$. In other words, the connected components of $V$ correspond to the  parity of the rank of the divisors. 
\end{theorem}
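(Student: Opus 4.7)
The plan is to deduce Theorem \ref{thm:parity} as a quick corollary of Proposition \ref{prop:larger_tableau} via Pflueger's description \eqref{eq:BN_locus} of Brill--Noether loci on a chain of loops, specialized to the folded chain $\widetilde{\Gamma}$. In one line: every $D \in V^\epsilon$ of rank $r := r(D)$ will be shown to sit inside $P_\epsilon(t)$ for a Prym tableau $t$ of length $r+1$, and Proposition \ref{prop:larger_tableau} then forbids $\epsilon \not\equiv r \pmod 2$ for such a configuration.

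To produce $t$, I would use that $D \in W^r_{2g-2}(\widetilde{\Gamma})$ together with \eqref{eq:BN_locus} to obtain a displacement tableau $t$ on the $(r+1) \times (r+1)$ square with $D \in T(t)$. The next step is to check that $t$ is automatically a Prym tableau. Indeed, whenever $t(a,b) = n$, every divisor in $T(t)$ satisfies $\langle\xi\rangle_n \equiv b - a \pmod{s_n}$; the Prym symmetry $\langle\xi\rangle_i = \langle\xi\rangle_{2g-i}$ (which holds for any $D \in V$) thus translates directly into the congruence $a - b \equiv c - d \pmod{s_n}$ whenever $n = t(a,b) = 2g - t(c,d)$, which is precisely the condition defining a Prym tableau. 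Since $\widetilde{\gamma}_g$ has torsion $2$, the prescribed value $\langle\xi\rangle_g \in \{0,1\}$ dictated by $\epsilon$ then places $D$ inside the cell $P_\epsilon(t)$.

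With this setup I would conclude by contradiction. Assume $\epsilon \not\equiv r \pmod 2$. Applying Proposition \ref{prop:larger_tableau} to the Prym tableau $t$ of length $r+1$ and to the divisor $D \in P_\epsilon(t)$ yields $r(D) > r$, contradicting $r = r(D)$. Hence $r(D) \equiv \epsilon \pmod 2$, as required. I do not expect a genuine obstacle at this stage: the substance of the parity phenomenon has already been absorbed into Proposition \ref{prop:larger_tableau}, whose proof via the sequence of upward displacements $\lambda_0 \subseteq \lambda_1 \subseteq \cdots \subseteq \lambda_{2g-1}$ carries all of the combinatorial work. The only point worth mentioning is the case $g \notin t$, where the parity of $D$ is not constrained by any cell of $t$; this causes no trouble because the proposition is stated for an arbitrary Prym tableau $t$ and an arbitrary divisor $D \in P_\epsilon(t)$, without presupposing that $g$ appears in $t$.
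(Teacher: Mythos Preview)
Your proposal is correct and follows essentially the same argument as the paper: pick a square displacement tableau $t$ of side $r(D)+1$ with $D\in T(t)$ via \eqref{eq:BN_locus}, observe that the Prym symmetry of $D$ forces $t$ to be a Prym tableau with $D\in P_\epsilon(t)$, and then invoke Proposition~\ref{prop:larger_tableau} to derive a contradiction from $\epsilon\not\equiv r\pmod 2$. Your write-up is in fact slightly more explicit than the paper's, which simply asserts ``As $D$ is a Prym divisor, $t$ must be a Prym tableau'' without spelling out the translation between the coordinate symmetry $\langle\xi\rangle_i=\langle\xi\rangle_{2g-i}$ and the congruence condition on $t$.
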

\begin{proof}
From Formula (\ref{eq:BN_locus}),  
there exists a square tableau $t$ of length $r(D)+1$ such that $D\in T(t)$. Moreover, $t$ is the largest square tableau with that property, since otherwise the rank of $D$ would be strictly greater than $r$. As $D$ is a Prym divisor, $t$ must be a Prym tableau. Assume by contradiction that the parity of $D$ differs from  $r(D) \mod{2}$. Then Proposition \ref{prop:larger_tableau} implies that there is a larger tableau $s$ such that $D\in T(s)$,
which is a contradiction. 
\end{proof}

We now make the following definition in light of Theorem \ref{thm:parity}.
\begin{definition}
Let $\pi:\widetilde\Gamma\to\Gamma$ be a folded chain of loops. For $r\geq -1$ we define the \emph{tropical Prym--Brill--Noether locus} $V^r(\Gamma,\pi)$ to be the closed subset
\begin{equation*}
V^r = V^r(\Gamma,\pi)=\big\{[D]\in\Pic_{2g-2}(\widetilde{\Gamma})\big\vert r(D)\geq r \textrm{ and }  [D]\in V^\epsilon(\Gamma, \pi)\big\}
\end{equation*}
in $V(\Gamma, \pi)\subseteq\Pic_{2g-2}(\widetilde{\Gamma})$, where $\epsilon\equiv r\mod 2$.
\end{definition}

From the proof of Theorem \ref{thm:parity}, it follows that we can describe the tropical Prym--Brill--Noether locus via Prym tableaux of the appropriate length. That is,
\begin{corollary}
The Prym--Brill--Noether locus $V^r$ satisfies 
\[
V^r(\Gamma,\pi) = \bigcup_{t} P_\epsilon(t),
\]
as $t$ varies over the Prym tableaux of length $r$, and  $\epsilon\equiv r\mod 2$. 
\end{corollary}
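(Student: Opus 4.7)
The plan is to deduce the corollary by combining the tableau description of Brill--Noether loci on chains of loops (formula \eqref{eq:BN_locus}) with the parity description of the components of $V$ given by Theorem \ref{thm:parity}, and to show that the union over \emph{all} rectangular displacement tableaux of the appropriate shape is supported on the Prym ones. Unwinding the definition, $V^r$ is the set of classes $[D] \in V^\epsilon$ with $r(D) \geq r$, where $\epsilon \equiv r \pmod 2$. Since $V^r \subseteq W^r_{2g-2}(\widetilde{\Gamma})$, formula \eqref{eq:BN_locus} gives
\[
V^r \;=\; V^\epsilon \cap \bigcup_t T(t),
\]
with $t$ ranging over the rectangular displacement tableaux on the appropriate $(r+1)\times(r+1)$ partition. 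Because $P_\epsilon(t) = T(t)\cap V^\epsilon$ by definition, this already rewrites as $\bigcup_t P_\epsilon(t)$, now indexed over all such tableaux. Conversely, each $P_\epsilon(t)$ for such a $t$ lies in $V^r$, since $T(t) \subseteq W^r_{2g-2}(\widetilde\Gamma)$ and $V^\epsilon$ is the component matching the parity of $r$.

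The remaining task is to restrict the index set to Prym tableaux, which I would do via an emptiness lemma: if $t$ is not Prym, then $P_\epsilon(t) = \emptyset$. For this, suppose $t(a,b)=n$ and $t(c,d)=2g-n$ with $a-b \not\equiv c-d$ modulo the torsion $s$ of the $n$-th loop. For any $D\in T(t)$, the construction recalled in Section~\ref{section_generic2:1chainofloops} forces $\langle\xi\rangle_n \equiv b-a \pmod s$ and $\langle\xi\rangle_{2g-n} \equiv d-c \pmod s$, whereas Lemma \ref{lem:equivalenToCanonical} together with the symmetry description of Prym divisors requires $\langle\xi\rangle_n = \langle\xi\rangle_{2g-n}$. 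Hence $T(t)$ contains no Prym class, so $P_\epsilon(t)$ is empty. An analogous argument handles the middle loop $\widetilde{\gamma}_g$ of torsion $2$: if $g$ appears in $t$ at $(a,b)$ with $b-a \not\equiv \epsilon \pmod 2$, then no divisor in $T(t)$ can have $\langle\xi\rangle_g = \epsilon$, so again $P_\epsilon(t) = \emptyset$. Together, these observations ensure that only Prym tableaux contribute nontrivially to the union.

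A subtle bookkeeping check in this argument concerns the length of the witnessing tableau: for $D \in V^r$ with rank exactly $r$, the formula \eqref{eq:BN_locus} gives a tableau on the $(r+1)\times(r+1)$ square directly, while for $D$ of higher rank the same formula, applied to the lower rank $r$, still produces a tableau of the desired square shape. In either case the emptiness lemma forces that tableau to be Prym, so every class of $V^r$ lies in some $P_\epsilon(t)$ with $t$ Prym of the required length. Combining both inclusions yields the stated equality. I expect no serious obstacle: the content is essentially the compatibility of the coordinate description of chips on each loop with the symmetry characterizing Prym divisors, and the only verification that is not purely formal is the emptiness lemma above.
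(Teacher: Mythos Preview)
Your proposal is correct and follows essentially the same route as the paper, which states the corollary as an immediate consequence of the proof of Theorem~\ref{thm:parity}. The key step there is the assertion ``as $D$ is a Prym divisor, $t$ must be a Prym tableau,'' and your emptiness lemma is precisely a spelled-out justification of that assertion via the coordinate description $\langle\xi\rangle_n \equiv b-a \pmod{s}$ together with the symmetry $\langle\xi\rangle_n = \langle\xi\rangle_{2g-n}$ from Lemma~\ref{lem:equivalenToCanonical}; the remaining decomposition $V^r = V^\epsilon \cap \bigcup_t T(t) = \bigcup_t P_\epsilon(t)$ is exactly the unwinding of \eqref{eq:BN_locus} and the definition of $P_\epsilon(t)$ that the paper leaves implicit.
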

\noindent Note that many of the elements in the union are empty, since $P_\epsilon(t)=\emptyset$ whenever $g$ appears in $t$ at a cell $(a,b)$ with $b-a+1\neq\epsilon$.

The question remains, whether there is an intrinsic tropical characterization for the two components of $V(\Gamma,\pi)$, when $\pi$ is not the standard cover of chain of loops.  

\begin{conjecture}
Let $\pi:\widetilde\Gamma\to\Gamma$ be a topological double cover. Then one component of $V(\Gamma,\pi)$ has a dense open set of divisors of rank $-1$, and the other component has a dense open set of divisors of rank $0$. 
\end{conjecture}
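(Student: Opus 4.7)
The plan is to transport the classical Prym--Brill--Noether theorem for generic unramified double covers to the tropical setting by combining Theorem \ref{thm_skeletonofPrym=Prymofskeleton} with Baker's specialization inequality, and then to control the rank-jumping obstruction under specialization by a dimension count.

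First, I would lift $\pi\colon \widetilde\Gamma \to \Gamma$ to an unramified algebraic double cover $\pi\colon \widetilde X \to X$ defined over a non-Archimedean field $K$ with $\pi^{trop} = \pi$. Such a lift exists by the valuative criterion of properness applied to the moduli space of admissible covers, as explained in Section \ref{section_2:1}. Moreover, one may arrange that $\pi$ is a generic point of its component in $\calR_g$, so that the classical Prym--Brill--Noether theorem of Welters \cite{Welters_GiesekerPetri} applies: the algebraic $V(X,\pi)$ decomposes into an ``odd'' component on which $h^0(L) = 1$ on an open dense subset, and an ``even'' component on which $h^0(L) = 0$ on an open dense subset. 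Via the retraction $\rho_{\Pr(X,\pi)}$ from Theorem \ref{thm_skeletonofhigherPrym=higherPrymofskeleton}, these two algebraic components retract to the two connected components of $V(\Gamma, \pi)$ guaranteed by Proposition \ref{prop:PBN_components}, establishing a bijection at the level of components.

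Next, Baker's specialization inequality gives $r(\rho(L)) \geq h^0(L) - 1$ for every $L \in V(X,\pi)$. Applied to the odd algebraic component, this immediately implies that every point of the corresponding tropical component has rank $\geq 0$, so the full component is contained in the rank-$\geq 0$ locus. To show that rank is generically exactly $0$ there, it suffices to verify that the tropical locus of rank $\geq 1$ has dimension strictly less than $g-1$. For a topological cover $\pi$ whose base is $k$-gonal with $k$ in the range addressed by Theorem \ref{thm_PrymBrillNoetherwithfixedgonality}, this follows from $\dim V^1(\Gamma, \pi) \leq g - 1 - n(1, \ell) < g - 1$ as soon as $n(1,\ell) \geq 1$. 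A symmetric argument handles the even tropical component: combining the tropical analogue of the Prym theta divisor with the classical Welters bound on the algebraic side shows that the tropical rank-$\geq 0$ locus is a proper closed subset of the even component, and hence rank is generically $-1$ there.

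The main obstacle, as illustrated by Example \ref{ex:rank_jump}, is that Baker's specialization can strictly raise rank, so the tropicalizations of the classical open dense loci $\{h^0 = 0\}$ and $\{h^0 = 1\}$ need not be dense in the corresponding tropical components. For topological covers whose base $\Gamma$ falls outside the range of Theorem \ref{thm_PrymBrillNoetherwithfixedgonality}, this rank-jump cannot be ruled out by the above dimension count, and a more delicate argument is required. I expect the cleanest resolution to proceed in the spirit of Section 4 above: parameterize connected topological double covers by their combinatorial type, reduce via edge-length deformation to a chain-of-loops model admitting an explicit cellular decomposition of $V(\Gamma, \pi)$ indexed by a Prym-like tableau, and then read off the generic rank in each cell from the tableau length in the style of Theorem \ref{thm:parity}. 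The combinatorial model in the topological case must necessarily differ from the folded chain of loops of Section \ref{section_generic2:1chainofloops}, since the latter has a non-trivial dilation cycle; constructing a workable such model and verifying that it governs the generic behavior of $V(\Gamma,\pi)$ will be the principal technical hurdle.
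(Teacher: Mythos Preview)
This statement is a \emph{conjecture} in the paper, not a theorem; the paper offers no proof and explicitly leaves it open. The only supporting evidence given is the remark following the conjecture: if $\Gamma=\Gamma_X$ and tropicalization is surjective on the Prym--Brill--Noether locus, then Baker's specialization forces one component to consist entirely of effective divisors, and by \cite[Theorem 4.1]{Len_BNrank} the other component has an open set of non-effective divisors as soon as it contains a single one. Your proposal is therefore not being compared to a proof but to an open problem, and your sketch does not close the gap.

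Several steps in your outline do not go through. First, your invocation of $\dim V^1(\Gamma,\pi)\leq g-1-n(1,\ell)$ relies on Corollary~\ref{cor_tropPBNwithgonality}, which is proved only for the folded chain of loops, not for an arbitrary topological cover; for a general $\Gamma$ there is no such dimension bound available. Second, arranging that a lift $\widetilde X\to X$ of a \emph{given} tropical cover is generic in $\calR_g$ is not possible in general: genericity in moduli and a prescribed skeleton are competing constraints. Third, your symmetric argument for the even component presupposes the existence of at least one non-effective divisor there, which is precisely the missing ingredient highlighted in the paper's remark after the conjecture. Finally, your closing assertion that the folded chain of loops ``has a non-trivial dilation cycle'' is incorrect: the cover in Section~\ref{section_generic2:1chainofloops} is a topological double cover (the middle loop $\widetilde\gamma_g$ maps two-to-one to $\gamma_g$ without dilation), and indeed the paper uses Proposition~\ref{prop:PBN_components} to conclude it has two components. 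So the folded chain of loops is already an instance of the conjecture, and Theorem~\ref{thm:parity} verifies the conjecture in that case; it is the extension to arbitrary topological covers that remains open.
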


Note that if $\Gamma = \Gamma_X$ for a smooth projective algebraic curve $X$, and the tropicalization map is surjective on the Prym--Brill--Noether locus (as is the case when $\Gamma$ is a chain of loops), then Baker's specialization lemma from \cite{Baker_specialization} implies that one of the components of $V(\Gamma,\pi)$ consists of only effective divisors. Moreover, if the other component contains a non-effective divisor, then it contains an open set of non-effective divisors by \cite[Theorem 4.1]{Len_BNrank}. 

\section{Prym--Brill--Noether numbers of folded chains of loops}
Having established the theory of special divisors on folded chains of loops in the previous section, we now compute the dimensions of their Prym--Brill--Noether loci. 
Throughout, we fix an integer $r$ and $\epsilon\equiv r\mod2$.



\subsection{Generic edge length}
In this subsection, we assume that $\Gamma$ is a chain of loops with generic edge length and $\pi\colon\widetilde{\Gamma}\rightarrow\Gamma$ is a folded chain of loops. 
\begin{proposition}\label{prop:maxCell}
Suppose that $g-1\geq\binom{r+1}{2}$. Then the Prym Brill--Noether locus $V^r(\Gamma,\pi)$ has a component of dimension at least $g-1-\binom{r+1}{2}$.
\end{proposition}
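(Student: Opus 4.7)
The plan is to exhibit an explicit Prym tableau $t$ on the $(r+1)\times(r+1)$ square partition such that the Prym--Brill--Noether cell $P_\epsilon(t)$ is non-empty and of dimension exactly $g-1-\binom{r+1}{2}$, where $\epsilon\equiv r\mod 2$. By the description of $V^r(\Gamma,\pi)$ as a union of such cells over Prym tableaux of length $r+1$, this immediately produces a component of $V^r(\Gamma,\pi)$ of the desired dimension.

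For the construction, I would first place the value $g$ in every cell of the anti-diagonal $\{(a,b):a+b=r+2\}$. This is consistent with the displacement tableau condition because $\widetilde{\gamma}_g$ has torsion $2$ and the lattice distance between any two anti-diagonal cells is even; moreover the parity condition $\epsilon\equiv r\mod 2$ matches the common value of $b-a\mod 2$ along the anti-diagonal. Next, I would assign to the $\binom{r+1}{2}$ cells strictly below the anti-diagonal any choice of $\binom{r+1}{2}$ distinct integers drawn from $\{g-\binom{r+1}{2},\ldots,g-1\}$, strictly increasing along rows and columns (for instance via a greedy row-by-row filling). The numerical hypothesis $g-1\geq\binom{r+1}{2}$ is exactly what makes enough such integers available. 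The cells above the anti-diagonal are then forced by Prym symmetry via $t(r+2-b,r+2-a)=2g-t(a,b)$, and one checks directly that the resulting tableau is again strictly increasing along rows and columns, has all non-$g$ entries distinct (as required, because for generic edge lengths every loop $\widetilde{\gamma}_n$ with $n\ne g$ has torsion $0$), and is Prym by construction.

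For the dimension count, the tableau $t$ uses exactly $r(r+1)+1$ distinct symbols from $\{1,\ldots,2g-1\}$: the value $g$ together with $\binom{r+1}{2}$ Prym pairs $\{n,2g-n\}$ with $n<g$. Consequently, $2g-2-r(r+1)=2\bigl(g-1-\binom{r+1}{2}\bigr)$ symbols do not appear in $t$, organizing themselves into exactly $g-1-\binom{r+1}{2}$ free Prym pairs $\{n,2g-n\}$. For any divisor class in $P_\epsilon(t)$, the chip on each loop indexed by a symbol of $t$ is pinned to a prescribed point, the chip on $\widetilde{\gamma}_g$ is pinned to $\widetilde{v}_g$ or $\widetilde{w}_g$ according to $\epsilon$, and on each free Prym pair the chip on $\widetilde{\gamma}_n$ can move freely on its loop while the Prym relation $\langle\xi\rangle_n=\langle\xi\rangle_{2g-n}$ forces the chip on $\widetilde{\gamma}_{2g-n}$ to move in lockstep. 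Each free pair therefore contributes exactly one real dimension, giving $\dim P_\epsilon(t)=g-1-\binom{r+1}{2}$.

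The step that I expect to require the most care is the dimension count: one must check that the displacement-tableau constraints and the Prym-symmetry constraints jointly cut out $P_\epsilon(t)$ transversely, so that the naive parameter count above is correct and no unexpected collapse of dimensions occurs. Once this transversality --- implicit in the paper's identification of Prym divisors via the coordinates $\langle\xi\rangle_n$ and the parity invariant --- is justified, the rest of the argument is purely combinatorial and depends only on the numerical inequality $g-1\geq\binom{r+1}{2}$.
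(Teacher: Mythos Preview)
Your proposal is correct and follows essentially the same approach as the paper: construct an explicit Prym tableau with $g$ along the anti-diagonal, distinct symbols $<g$ below, and the symmetric completion above, then count free Prym pairs to get dimension $g-1-\binom{r+1}{2}$. Your transversality concern is already resolved by the paper's coordinate description of Prym divisors via the $\langle\xi\rangle_n$, which makes the dimension count immediate; the paper simply invokes Lemma~\ref{lem:equivalenToCanonical} to phrase the same verification.
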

\begin{proof}
The proof will follow from repeatedly applying Lemma \ref{lem:equivalenToCanonical}.
Choose a square partition $\lambda$ of length $r+1$, and construct a tableau $t$ as follows. Place the symbol $g$ along the anti-diagonal (the cells with coordinate $(a,b)$ with $a+b=r+2$). Fill the part of the tableau below the anti-diagonal with integers $0<i_1< i_2<\ldots < i_{\binom{r+1}{2}}<g$, and  the part above the anti-diagonal  symmetrically according to
$t(r+2-b,r+2-a) = 2g-t(a,b)$.
By the position of the symbol $g$, along with Theorem \ref{thm:parity}, the parity of any divisor in  $P(t)$ equals the parity of $r+1$,  so $P_\epsilon(t) = P(t)$, and $P(t)\subseteq V^r(\Gamma,\pi)$.

The tableau determines the position of the chips on the loops $\widetilde{\gamma}_i$ and $\widetilde{\gamma}_{2g-i}$ for $i=1,2,\ldots, \binom{r+1}{2}$. Each such pair is mapped down by $\pi$ to a pair of chips on $\gamma_i$ that are equidistant from the vertices. Those  chips may be moved (by maintaining linear equivalence) to the two vertices of $\gamma_i$. It also determines a chip on $\widetilde{\gamma}_g$ that is mapped down to the vertex of $\gamma_g$. 

The integers $i=\binom{r+1}{2}+1, \binom{r+1}{2}+2,\ldots, g-1$ do not appear in the tableau, so  any choice for the position of the chip on $\gamma_i$ will result in a divisor of rank $r$. To guarantee that this divisor maps down to $K_{\Gamma}$ we choose  the chips on $\widetilde{\gamma}_i$ and $\widetilde{\gamma}_{2g-i}$ to be equidistant from the vertices. Their image is a pair  of chips on $\gamma_i$ that may be moved to the vertices while maintaining linear equivalence. 
By the construction, we have one degree of freedom for each such pair, and in total $g-1-\binom{r+1}{2}$ degrees of freedom. 
\end{proof}

\begin{example}
Let $r=2$, let $\Gamma$ be a chain of $g=5$ generic loops, and $\tilde\Gamma$ a chain of $9$ loops double covering it. In this case, we expect the PBN locus to be $1$ dimensional.  The tableau 
\begin{center}
\begin{tabular}{ |c|c|c| } 
 \hline
 5 & 8 & 9 \\ \hline
 2 & 5 & 7 \\ \hline
 1 & 3 & 5 \\ 
 \hline
\end{tabular}
\end{center}
gives rise to $g^2_8$'s  on $\tilde\Gamma$ in which the location of the chip on loops $1, 2,3,5,7,8,9$ is determined, and the chip on loops $4,6$ may be chosen arbitrarily (see Figure \ref{Fig:Prym_divisor}). Explicitly, there is a single pole after loop $9$, the chip on loop $1$ (resp. $9$) is on the right (resp. left) vertex, the chip on loop $2$ (resp. $8$) is one step counter clockwise (resp. clockwise) from the right vertex, the chip on loop $3$ (resp. $7$) is on the left (resp. right) vertex, and the chip on loop $5$ is at the top vertex. 
In order for those divisors to be Prym, we must choose the free chips on loops $4$ and $6$  symmetrically, so we get a $1$-dimensional family. 

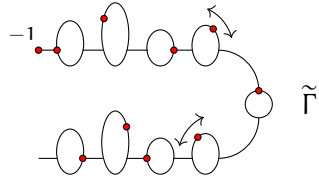
\begin{figure}[h!]
\begin{tikzpicture}[scale=.6]

\begin{scope}

\draw (6,-0.75) node {$\widetilde\Gamma$};

\draw (0.41,0.35) -- (0,0.35);
 \draw (0.7,0.5) ellipse (0.3 and .5);
 \draw (0.98,0.35) -- (1.45,0.35);
 \draw (1.7,0.7) ellipse (0.3 and .7);
 \draw (1.95,0.35) -- (2.42,0.35);
 \draw (2.7,0.4) ellipse (0.3 and .4);
  \draw (2.98,0.35) -- (3.42,0.35);
 \draw (3.7,0.45) ellipse (0.3 and .45);

\draw (3.98,0.35) arc[radius = .9, start angle=90, end angle=0];
 \draw (4.88,-.85) circle (0.3);
 \draw (3.98,-2.05) arc[radius = .9, start angle=-90, end angle=0];

  \draw [fill=red] (0,0.35) circle (0.7mm);
  \draw [] (-.35,0.7) node {\tiny $-1$};
  \draw [fill=red] (0.41,0.35) circle (0.7mm);
    \draw [fill=red] (1.45,1.05) circle (0.7mm);
    \draw [fill=red] (3,0.35) circle (0.7mm);
    \draw [fill=red] (4.88,-0.55) circle (0.7mm);
   
    \draw [fill=red] (3.87,0.82) circle (0.7mm);
     \draw [<->] (4.28,0.62) arc[radius = 1, start angle=15, end angle=65];

\end{scope}

 \begin{scope} [shift={(0,-2.4)}]

\draw (0.41,0.35) -- (0,0.35);
 \draw (0.7,0.5) ellipse (0.3 and .5);
 \draw (0.98,0.35) -- (1.45,0.35);
 \draw (1.7,0.7) ellipse (0.3 and .7);
 \draw (1.95,0.35) -- (2.42,0.35);
 \draw (2.7,0.4) ellipse (0.3 and .4);
  \draw (2.98,0.35) -- (3.42,0.35);
 \draw (3.7,0.45) ellipse (0.3 and .45);


  \draw [thin, color=black, fill=red] (0.98,0.35) circle (0.7mm);
    \draw [fill=red] (1.95,1.05) circle (0.7mm);
    \draw [fill=red] (2.4,0.35) circle (0.7mm);
    
    \draw [fill=red] (3.53,0.82) circle (0.7mm);
     \draw [<->] (3.12,0.62) arc[radius = 1, start angle=165, end angle=115];

\end{scope}

\end{tikzpicture}
\caption{Prym divisor on a chain of 9 loops.}
\label{Fig:Prym_divisor}
\end{figure}

\end{example}



As we shall now see, the cell constructed in the last proposition is, in fact, maximal. 
\begin{lemma}\label{lem:maximize}
Let $P_\epsilon(t)$ be a cell of the Prym--Brill--Noether locus $V^r(\Gamma, \pi)$ corresponding to a Prym tableau $t$. Then $\dim{P(t)}\leq g-1-\binom{r+1}{2}$.
\end{lemma}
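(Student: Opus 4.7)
The plan is to identify the codimension imposed by the tableau $t$ on the Prym variety and show that it is at least $\binom{r+1}{2}$. First I would set up the ambient count: by Theorem \ref{thm_Prymvariety} we have $\dim \Pr(\Gamma,\pi) = h_0 - g_0 = (2g-1) - g = g-1$, and $V^\epsilon(\Gamma,\pi)$ is a torsor over it. Concretely, a class in $V^\epsilon$ is described by chip positions $\langle\xi\rangle_1,\ldots,\langle\xi\rangle_{2g-1}$ on the loops of $\widetilde{\Gamma}$, subject to the Prym symmetry $\langle\xi\rangle_i = \langle\xi\rangle_{2g-i}$ for $i<g$ together with the discrete condition on $\langle\xi\rangle_g \in \{0,1\}$ encoded by $\epsilon$. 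This yields $g-1$ continuous parameters, one per pair $\{i,2g-i\}$.

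Next, each symbol $n\neq g$ appearing in $t$ at a cell $(a,b)$ pins down $\langle\xi\rangle_n = b-a$, and via Prym symmetry, also fixes $\langle\xi\rangle_{2g-n}$; since $t$ is a Prym tableau, the constraints imposed when both $n$ and $2g-n$ occur are consistent. Letting $A(t)$ denote the number of pairs $\{i,2g-i\}$ with $i\neq g$ that are hit by some symbol of $t$, one therefore obtains
\[
\dim P_\epsilon(t) = (g-1) - A(t),
\]
and it suffices to prove $A(t)\geq\binom{r+1}{2}$.

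The key combinatorial step is to bound how often the symbol $g$ can occur in $t$. For the generic chain, loops $\widetilde{\gamma}_i$ with $i\neq g$ have torsion $0$, so symbols $\neq g$ appear at most once in $t$, while $\widetilde{\gamma}_g$ has torsion $2$, allowing $g$ to repeat only between cells at even lattice distance. Since a displacement tableau has weakly increasing rows and columns, two comparable $g$-cells $(a_1,b_1)\leq(a_2,b_2)$ would force the rectangle $[a_1,a_2]\times[b_1,b_2]$ to be filled entirely with $g$. However, any such rectangle of size larger than $1\times 1$ contains cells with opposite parities of $b-a$, contradicting the torsion condition $b-a\equiv\epsilon\pmod 2$. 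Hence the $g$-cells form an antichain in the $(r+1)\times(r+1)$ square in the coordinate-wise order, and there are at most $r+1$ of them.

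Consequently, at least $(r+1)^2-(r+1) = r(r+1)$ cells of $t$ carry distinct symbols different from $g$, and grouping these symbols by the involution $i\leftrightarrow 2g-i$ gives $A(t)\geq r(r+1)/2=\binom{r+1}{2}$. The same bound applies to $\dim P_{-\epsilon}(t)$ when it is non-empty (the count of $A(t)$ does not depend on $\epsilon$), so one concludes $\dim P(t)\leq g-1-\binom{r+1}{2}$. I expect the main obstacle to be the antichain bound on the $g$-cells: this is where the generic edge length hypothesis is crucially used (so that only the label $g$ can repeat), while the rest of the argument is a direct dimension count on the Prym variety.
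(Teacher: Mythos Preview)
Your argument is correct and follows essentially the same line as the paper's own proof: both identify the codimension of $P_\epsilon(t)$ inside the $(g-1)$-dimensional torsor as the number of pairs $\{i,2g-i\}$ (with $i\neq g$) hit by a symbol of $t$, then use that $g$ is the only repeatable symbol and can occur at most $r+1$ times, so at least $r(r+1)$ cells carry distinct non-$g$ symbols and $A(t)\geq\binom{r+1}{2}$. Your proof is in fact slightly more explicit than the paper's, which simply asserts that $g$ ``may appear at most $r+1$ times'' without justification; your antichain argument (comparable $g$-cells would force a rectangle of $g$'s, which contains adjacent cells of opposite $b-a$ parity, contradicting the torsion-$2$ condition) fills this in. One cosmetic point: writing ``$P_{-\epsilon}(t)$'' is awkward since $\epsilon\in\{-1,0\}$; you mean the other parity, but the bound on $A(t)$ is parity-independent anyway, so $\dim P(t)=\max\{\dim P_{-1}(t),\dim P_0(t)\}\leq (g-1)-A(t)$ follows immediately.
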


\begin{proof}
From Lemma \ref{lem:equivalenToCanonical}, if $i$ appears in a tableau but $2g-i$ does not, the position of the free chip on $\widetilde{\gamma}_{2g-i}$ is determined by the position of the chip on $\widetilde{\gamma}_i$. It follows that the dimension of the Prym--Brill--Noether locus is the number of $0<i<g$ such that neither $i$ nor $2g-i$ appear in the tableau. In particular, the co-dimension is bounded from below by half the number of symbols, other than $g$, that appear in the tableau. 
Since $g$ is the only symbol that may repeat in $t$, and may appear at most $r+1$ times, this number is minimized 
precisely when $g$ appears along the anti-diagonal and $t(a,b) = (2g-b,2g-a)$ away from the anti-diagonal, in which case the dimension of the Prym--Brill--Noether locus is $g-1-\binom{r+1}{2}$. 
\end{proof}

We may now prove the main result of this subsection.
\begin{theorem}\label{thm_tropicalPrymBrillNoether}
Suppose that $\pi:\widetilde\Gamma\to\Gamma$ is a folded chain of loops with generic edge length. 
\begin{enumerate}[(i)]
\item If  $g-1<\binom{r+1}{2}$, then  $V^r(\Gamma,\pi)$ is empty. 
\item When $g-1\geq\binom{r+1}{2}$, then $V^r(\Gamma,\pi)$ has pure dimension $g-1-\binom{r+1}{2}$.
\end{enumerate}
\end{theorem}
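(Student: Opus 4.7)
The plan is to combine the cellular decomposition $V^r(\Gamma,\pi) = \bigcup_{t} P_\epsilon(t)$ over Prym tableaux $t$ of shape $(r+1) \times (r+1)$ (with $\epsilon \equiv r \pmod 2$) with the uniform upper bound $\dim P_\epsilon(t) \leq g-1-\binom{r+1}{2}$ provided by Lemma \ref{lem:maximize}. This yields $\dim V^r(\Gamma,\pi) \leq g-1-\binom{r+1}{2}$ unconditionally.

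Part (i) is then immediate: if $g-1 < \binom{r+1}{2}$ then Lemma \ref{lem:maximize} forces each cell $P_\epsilon(t)$ to have formally negative dimension (equivalently, no valid tableau can fit $\binom{r+1}{2}$ distinct pair-indices into the $g-1$ available ones), so every cell is empty and $V^r(\Gamma,\pi) = \emptyset$. For the dimension statement in part (ii), Proposition \ref{prop:maxCell} exhibits a cell realising the upper bound, so $\dim V^r(\Gamma,\pi) = g-1-\binom{r+1}{2}$.

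To upgrade this to pure dimensionality I would show that every Prym divisor $D \in V^r(\Gamma,\pi)$ already lies in some maximum-dimensional cell $P_\epsilon(t')$. Given any Prym tableau $t$ with $D \in P_\epsilon(t)$, the idea is to construct a \emph{self-dual} maximum-dimensional Prym tableau $t'$ representing $D$ by symmetrising $t$: place $g$ along the entire anti-diagonal (consistent with the parity $\epsilon \equiv r \pmod 2$ and the torsion-$2$ condition on $\widetilde\gamma_g$) and fill each dual pair of non-anti-diagonal cells $\{(a,b),(r+2-b,r+2-a)\}$ with a matched pair $\{n,2g-n\}$ chosen so that the $(b-a)$-labelled chip position on loop $n$ coincides with the chip position of $D$ on $\widetilde\gamma_n$. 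The Prym symmetry $\langle\xi\rangle_n = \langle\xi\rangle_{2g-n}$ enjoyed by every Prym divisor then guarantees automatically that the dual entry $2g-n$ is consistent with the chip of $D$ on $\widetilde\gamma_{2g-n}$. The resulting $t'$ has exactly $\binom{r+1}{2}$ pair-indices and thus attains dimension $g-1-\binom{r+1}{2}$, so $D$ lies in a maximum-dimensional cell, and every irreducible component of $V^r(\Gamma,\pi)$ has the claimed dimension.

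The main obstacle is verifying that the symmetrisation is globally consistent: that the $\binom{r+1}{2}$ pair-indices can be distributed over the $\binom{r+1}{2}$ dual pairs of non-anti-diagonal cells so that $t'$ satisfies the strict tableau inequalities, and that the anti-diagonal entries can indeed be taken to be $g$ without obstructing the ordering. For generic edge lengths, all non-$g$ loops of $\widetilde\Gamma$ carry torsion $0$, forcing each non-$g$ symbol to appear at most once in any Prym tableau, and the bounded range of symbols $\{1,\ldots,2g-1\}$ together with the combinatorics of the multiset $\{b-a : a+b \leq r+1\}$ of diagonal displacements restricts the admissible configurations enough to make the required bijective assignment. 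This combinatorial step is modelled on Pflueger's purity argument for Brill--Noether loci of chains of loops \cite[Theorem 1.4]{Pflueger_special}, here adapted to respect the Prym involution $(a,b) \mapsto (r+2-b,r+2-a)$.
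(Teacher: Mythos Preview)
Your overall plan coincides with the paper's: combine Lemma~\ref{lem:maximize} and Proposition~\ref{prop:maxCell} for the dimension bounds and emptiness, then show every cell is contained in a maximum-dimensional one for purity.

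The difference lies in how purity is established. You propose to work divisor-by-divisor, choosing for each $D\in P_\epsilon(t)$ a self-dual tableau $t'$ by matching loop indices $n$ to the chip positions of $D$; you then acknowledge the ``main obstacle'' of verifying that such a matching can be made into a genuine strictly-increasing tableau. The paper instead works cell-by-cell and resolves exactly this obstacle using the upward-displacement machinery already developed for Proposition~\ref{prop:larger_tableau}: given $t$, it forms the tableau $t_{g-1}(t)$ (Definition~\ref{def:upwards_displacement}), which by Lemma~\ref{lem:monotonicity} is filled with symbols $1,\dots,g-1$ and already covers the full lower triangle; placing $g$ on the anti-diagonal and reflecting via $s(r+2-b,r+2-a)=2g-s(a,b)$ then yields a valid Prym tableau $s$ with $P_\epsilon(t)\subseteq P_\epsilon(s)$ and $\dim P_\epsilon(s)=g-1-\binom{r+1}{2}$. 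This sidesteps your consistency problem entirely, because the displacement construction is automatically a valid tableau and imposes no constraints beyond those already present in $t$.

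In short, your symmetrisation idea is on the right track, but the clean way to carry it out---and to dissolve the obstacle you flag---is precisely the displacement construction $t_{g-1}$, which you do not invoke.
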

\begin{proof}
The existence part of the theorem and the dimension of the largest component follow directly from Lemma \ref{lem:maximize} and Proposition \ref{prop:maxCell}. It is left to show that the locus is pure dimensional. Namely, that every component of non-maximal dimension is contained in a component of dimension $g-1-\binom{r+1}{2}$. 

The set $V^r$ is the union of $P_\epsilon(t)$, as $t$ varies over Prym tableau of length $r+1$. Let $t$ be such a tableau. We claim that there is a tableau $s$ such that $P_\epsilon(t)\subseteq P_\epsilon(s)$, and $\dim\big(P_\epsilon(s)\big)$ is maximal. Indeed, consider the tableau $t_{g-1}(t)$, as constructed in Definition \ref{def:upwards_displacement}. Then $t_{g-1}(t)$ consists only of the symbols $1,\ldots,g-1$, and contains the lower triangle of length $r$. 
Let $s$ be the tableau of length $r+1$, that coincides with $t_{g-1}$ below the anti-diagonal, contains the symbol $g$ along the anti-diagonal, and $s(r+2-b,r+2-a) = 2g - s_{g-1}(a,b)$ above it. 
Then $P_\epsilon(t)\subseteq P_\epsilon(s)$,  and since none of the symbols apart from $g$ may repeat in the tableau, we conclude that $\dim{P_\epsilon(s)} = g-1-\binom{r+1}{2}$. 
\end{proof}

In the $0$-dimensional case, we find a tropical analogue of a classical result by De Concini and Pragacz \cite[Theorem 9]{DeConciniPragacz}.

\begin{corollary}
When $g-1=\binom{r+1}{2}$, the number of Prym--Brill--Noether divisors is \[
\frac{\binom{r+1}{2}!}{(2r-1)\cdot(2r-3)^2\cdot\ldots\cdot 1^r}.
\]
\end{corollary}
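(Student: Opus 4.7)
The strategy is to establish a bijection between $V^r(\Gamma,\pi)$ and the set $\mathrm{SYT}(\delta_r)$ of standard Young tableaux on the staircase partition $\delta_r=(r,r-1,\ldots,1)$, and then to count $\mathrm{SYT}(\delta_r)$ by means of the hook length formula.

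By Theorem~\ref{thm_tropicalPrymBrillNoether}~(ii), the locus $V^r(\Gamma,\pi)$ is zero-dimensional when $g-1=\binom{r+1}{2}$, and hence finite. From the proof of Lemma~\ref{lem:maximize}, the dimension of a cell $P_\epsilon(t)$ associated to a Prym tableau $t$ of length $r+1$ equals the number of indices $i\in\{1,\ldots,g-1\}$ such that neither $i$ nor $2g-i$ appears in $t$. In our regime this forces every symbol in $\{1,\ldots,2g-1\}\setminus\{g\}$ to occur exactly once, each paired with its partner under the Prym symmetry $t(a,b)=2g-t(r+2-b,r+2-a)$. Counting cells, $g$ must then occupy exactly $(r+1)^2-2\binom{r+1}{2}=r+1$ cells; since a displacement tableau is strictly increasing along rows and columns, these $r+1$ cells form an antichain in the grid poset $[r+1]\times[r+1]$, which by a direct argument (an antichain of maximal size $n$ in $[n]\times[n]$ is necessarily the graph of a strictly decreasing function on all of $\{1,\ldots,n\}$) is uniquely the anti-diagonal. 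Consequently, every such Prym tableau is obtained by filling the lower triangle $\{(a,b):a+b\leq r+1\}$ with a standard tableau on $\delta_r$ using $\{1,\ldots,\binom{r+1}{2}\}$, placing $g$ on the anti-diagonal, and extending to the upper triangle by the Prym symmetry.

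Conversely, every standard Young tableau $s$ on $\delta_r$ extends in this way to a valid Prym displacement tableau: strict monotonicity across the anti-diagonal follows since below-entries are at most $g-1$ and above-entries are at least $g+1$, and the torsion-$2$ condition at the loop $\widetilde{\gamma}_g$ is automatic because all anti-diagonal cells share the same parity of $a-b$. Since in the zero-dimensional case each cell $P_\epsilon(t)$ reduces to a single divisor class (no chip position is free), this yields $|V^r(\Gamma,\pi)|=|\mathrm{SYT}(\delta_r)|$. To finish, a direct computation gives $h(i,j)=2r+3-2(i+j)$ for $(i,j)\in\delta_r$, and there are exactly $s-1$ cells with $i+j=s$ for each $s\in\{2,\ldots,r+1\}$. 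Reindexing by $\ell=s-1$, the hook length formula yields
\[
|\mathrm{SYT}(\delta_r)|=\frac{\binom{r+1}{2}!}{\prod_{\ell=1}^{r}(2(r-\ell)+1)^{\ell}}=\frac{\binom{r+1}{2}!}{(2r-1)\cdot(2r-3)^{2}\cdots 1^{r}}.
\]

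The main subtlety is verifying the uniqueness of the anti-diagonal as the unique maximum antichain in $[r+1]\times[r+1]$ and confirming that the extension of a staircase SYT across the anti-diagonal satisfies all the displacement-tableau axioms, including the torsion condition at $\widetilde{\gamma}_g$; once this bijection is in place, the enumerative hook length computation is routine.
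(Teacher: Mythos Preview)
Your proof is correct and follows the same approach as the paper: establish a bijection between the Prym--Brill--Noether divisors and standard Young tableaux on the staircase shape $\delta_r=(r,r-1,\ldots,1)$, then apply the hook length formula. The paper compresses the bijection into a single sentence by invoking Theorem~\ref{thm_tropicalPrymBrillNoether}, whereas you unpack it via the counting argument that forces $g$ onto the anti-diagonal and all other symbols to appear exactly once; both arguments are equivalent. One small point worth making explicit: the Prym condition for generic torsion only says that $i$ and $2g-i$ lie on the same diagonal, not a priori at the reflected positions $(r+2-b,r+2-a)$; the full symmetry then follows because on each diagonal the monotonicity of the tableau forces a unique pairing between lower- and upper-triangle cells.
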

\begin{proof}
By Theorem \ref{thm_tropicalPrymBrillNoether}, the Prym--Brill--Noether divisors correspond to symmetric $(r+1)\times(r+1)$ Young tableaux with the symbol $g$ along the anti-diagonal. Each such tableau is uniquely determined by a Young tableau  with row lengths $(r,r-1,\ldots,2,1)$. The result now follows from the hook length formula.
\end{proof}


\subsection{Special chains of loops}\label{section_specialchainofloops}
We now turn to the case where the base graph $\Gamma$ is a chain of loops in which the torsion of every loop is $k$. As for $\widetilde\Gamma$, the torsion of $\widetilde{\gamma}_i$ is $k$ when $i\neq g$, and the torsion remains $2$ when $i=g$. 
In the corresponding tableau, a symbol $i\neq g$ is allowed to repeat, but only if the lattice distance between every two occurrences is a multiple of $k$.  


%
%
As in the introduction, denote 
 \[
 n(r,\ell) =
  \begin{cases}
                    \binom{\ell+1}{2}+\ell(r-\ell) &
                    \text{if $\ell\leq r-1$} \\
                    \binom{r+1}{2} & \text{if $\ell > r-1$},
  \end{cases}
  \]
where $\ell=\lceil\frac{k}{2}\rceil$. Note that for even $k\leq 2r-2$  we have $n(r,\ell)=\frac{rk}{2}-\frac{k^2}{8}+\frac{k}{4}$.


In what follows, we use the term \emph{lower triangular tableau} to describe a tableau consisting of cells with coordinates $(a,b)$ with $a+b<r+2$. Such a tableau is obtained by restricting a square tableau to the cells below the anti-diagonal (see Figure \ref{fig:upperTriangularTableau}).


\begin{lemma}\label{lem:upperTriangularTableau}
Let $k$ be a non-negative integer that is either even or greater than $2r-2$. Then the smallest number of symbols in a  $k$-uniform lower triangular tableau is $n(r,\ell)$.
\end{lemma}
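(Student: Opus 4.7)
My plan is to prove matching upper and lower bounds on the number of symbols. The key structural observation I would establish first is that two cells $(a,b),(c,d)$ in a $k$-uniform displacement tableau can bear the same symbol precisely when (i) they are incomparable in the product order and (ii) their contents $b-a$ and $d-c$ agree modulo~$k$. For (ii): if the cells are incomparable with, say, $a<c$ and $b>d$, then the lattice distance $(c-a)+(b-d)$ equals the content difference $(b-a)-(d-c)$, so divisibility of the distance by $k$ is precisely divisibility of the content difference by $k$. Cells in distinct content classes modulo~$k$ therefore bear distinct symbols, and the minimum total equals $\sum_{c\in\mathbb{Z}/k\mathbb{Z}} M_c$, where $M_c$ is the minimum number of antichains needed to cover the cells of class~$c$ in the lower triangle. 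Mirsky's theorem then identifies $M_c$ with $L_c$, the length of the longest chain in class~$c$.

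For each $c\in\{0,1,\ldots,k-1\}$, I would argue that $L_c=\lfloor(r+1-m_c)/2\rfloor$ where $m_c:=\min(c,k-c)$. The idea is that the longest chain lies on the single diagonal of content $\pm m_c$: each diagonal step increases $a+b$ by exactly~$2$, while every ``jump'' between two distinct diagonals in the same class (which changes the content by $\pm k$) increases $a+b$ by at least $k=2\ell\geq 2$. Given the budget $a+b\leq r+1$ imposed by the lower triangle, jumps never produce a chain longer than a pure diagonal chain starting at the cell with smallest $a+b$ in the class. When $\ell>r-1$ (equivalently $k>2r-2$), the maximum distance $2(r-1)$ between any two cells of the triangle is strictly less than $k$, so no repetition is ever possible and the minimum equals $\binom{r+1}{2}$, as claimed.

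For the case $\ell\leq r-1$, the values $m_c$ with $c=0,1,\ldots,k-1$ form the palindromic multiset in which $0$ and $\ell$ each occur once and each integer $1,\ldots,\ell-1$ occurs twice, yielding
\[
\sum_{c=0}^{k-1} L_c = L_0 + L_\ell + 2\sum_{m=1}^{\ell-1}\left\lfloor\tfrac{r+1-m}{2}\right\rfloor.
\]
A short arithmetic computation, split on the parity of $r$, verifies that this sum equals $\binom{\ell+1}{2}+\ell(r-\ell)=n(r,\ell)$, giving the lower bound. For the matching upper bound I would exhibit an explicit filling: within each class~$c$ pick a maximal chain of length $L_c$, then partition the remaining cells of the class into the $L_c$ antichain ``layers'' dictated by the Mirsky decomposition; assign the $\sum_c L_c=n(r,\ell)$ distinct labels to these layers in a manner consistent with the global product order, which is possible because constraints between labels in different classes are automatic once one orders layers by (minimum value of $a+b$ on the layer).

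The principal obstacle will be verifying the chain-length formula rigorously against the possibility of ``clever'' chains mixing several diagonals via multiple jumps; once the cost accounting ($2$ per diagonal step versus $\geq k$ per jump) is in place against the bound $a+b\leq r+1$, this reduces to a short optimization. The remaining labor is the parity-dependent arithmetic in the summation step, which is routine but benefits from careful bookkeeping.
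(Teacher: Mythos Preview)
Your lower-bound argument is correct and is a genuinely different (and rather clean) route from the paper's. Both proofs rest on the observation that two cells can share a symbol only if they are incomparable and have the same content modulo $k$. The paper then exhibits a concrete ``core'' set $S_u$ of cells near the diagonal, no two of which can share a symbol, and counts $|S_u|=n(r,\ell)$. You instead split by content class and invoke Mirsky: each class $c$ needs at least $L_c$ symbols, where $L_c$ is the length of the longest chain in that class, and your budget argument (each step costs at least $2$ in $a+b$, the starting cell has $a+b\ge m_c+2$) correctly gives $L_c=\lfloor(r+1-m_c)/2\rfloor$. Summing over the palindromic multiset of $m_c$'s yields $n(r,\ell)$. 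This is a nice alternative.

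The upper bound, however, has a real gap. Your plan is to take the Mirsky antichain decomposition in each class and assign one label per layer, ordered by the minimum of $a+b$ on the layer. This cannot work in general: the per-class Mirsky layers can force cyclic constraints. Take $k=4$ and $r\ge 4$. In class $1$ the minimal cells $(1,2)$ and $(4,1)$ both lie in the first Mirsky layer $A_1^{(1)}$; in class $3$ the minimal cells $(2,1)$ and $(1,4)$ both lie in $A_1^{(3)}$. But $(1,2)<(1,4)$ forces the label of $A_1^{(1)}$ to be less than that of $A_1^{(3)}$, while $(2,1)<(4,1)$ forces the opposite. So no single label per Mirsky layer can give a valid tableau, and ordering by $\min(a+b)$ does not resolve this (both layers have minimum $3$). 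Some \emph{other} antichain decomposition of each class into $L_c$ pieces might admit a consistent global labeling, but you have not identified one, and it is not obvious how to do so uniformly.

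The paper sidesteps this issue entirely by writing down an explicit filling: put $t'(a,b)=(a-1)\tfrac{k}{2}+b$ for $1\le b\le \tfrac{k}{2}$ and extend by $t'(a,b+\tfrac{k}{2})=t'(a,b)+\tfrac{k^2}{4}$, then restrict to the lower triangle. One checks directly that this is a valid $k$-uniform tableau and that every symbol above row $\tfrac{k}{2}$ already appears in the first $\tfrac{k}{2}$ rows, so the number of symbols is exactly $n(r,\ell)$. For your argument to be complete you would need either such an explicit construction or a non-Mirsky antichain decomposition together with a proof that it admits a globally consistent labeling.
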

\begin{proof}
If $k> 2r$, then no repetition is allowed in the tableau, and it contains at most $\binom{r+1}{2}$ symbols. So we may assume $k\leq 2r$. 

We first show that there exists a tableau with $\frac{rk}{2}-\frac{k^2}{8}+\frac{k}{4}$ symbols as follows (cf. \cite[Lemma 3.5]{Pflueger_kgonalcurves}).
Begin with a square $(r+1)\times (r+1)$ partition $\lambda'$. Place the integers from $1$ to $\frac{k}{2}$ sequentially along the first column starting from the bottom. Repeat this in the second column starting from $\frac{k}{2}+1$, and continue until the integers between $1$ and $\frac{k}{2}(r+1)$ fill the first $\frac{k}{2}$ rows. 
We fill the subsequent rows inductively, by assigning  $t'(a,b+\frac{k}{2}) = t'(a,b) + \frac{k^2}{4}$. The lower triangular tableau $t$ will be the restriction of $t'$ to the cells below the anti-diagonal. See Figure \ref{fig:upperTriangularTableau} for an example. 

One now checks that the number of symbols in the first $\frac{k}{2}$ rows of $t$ equals $n(r,\ell)$. Moreover,  we claim
that none of  the subsequent rows introduces new symbols. To see that, let $(a,b)$ be a cell of $\lambda$. Let $\alpha\in\NN$ be such that $a-\alpha\cdot\frac{k}{2}$ is between $0$ and  $\frac{k}{2}$. Then $(a-\alpha\cdot\frac{k}{2}, b+\alpha\cdot\frac{k}{2})$ is a cell of $\lambda$ in the first $\frac{k}{2}$ rows, and by construction, $t(a-\alpha\cdot\frac{k}{2}, b+\alpha\cdot\frac{k}{2}) = t(a,b)$.
It follows that  the total number of symbols in $t$ is $n(r,\ell)$.

We now show that no $k$-uniform lower triangular tableau has fewer than  $n(r,\ell)$  symbols. 
Let $t$ be such a tableau, and let $S_u$ consist of the cells that are at most $\frac{k}{2}-1$ steps directly to the right of the diagonal, or at most $\frac{k}{2}$ steps directly above the diagonal. 
Since $t$ is $k$-uniform, any two cells in $S_u$ must have different symbols, so the size of $S_u$ is a lower bound for the number appearing in any such tableau. To determine the size of $S_u$, we count the number of symbols directly to the right and directly above each diagonal cell. Assume first that $r$ is even and $k\equiv 2\mod 4$. Each of the first $\frac{r+1}{2}-\frac{k}{4}$ diagonal cells  contributes $k$ cells. The next diagonal cell contributes 3 fewer cells, and the number goes down by $4$ for each subsequent diagonal cell. The last cell contributes $3$. A straightforward calculation shows that the sum is exactly $n(r,\ell)$.
A similar argument works for other values of $k$ and $r$, as long as $k$ is even.
\end{proof}

\begin{figure}
\ytableausetup{centertableaux}
\begin{ytableau}
9\\
6&8\\
5&7&9\\
2&4&6&8\\
1&3&5&7&9
\end{ytableau}
\caption{The lower triangular tableau constructed in Lemma \ref{lem:upperTriangularTableau} when $r=5, k=4$.}
\label{fig:upperTriangularTableau}
\end{figure}

\begin{corollary}\label{cor_tropPBNwithgonality}
Assume that $k$ is either even or greater than $2r-2$. 
\begin{enumerate}[(i)]
\item
If $g-1<n(r,\ell)$ then the Prym--Brill--Noether locus is empty. 
\item Otherwise, the dimension of the Prym--Brill--Noether locus is $\rho: = g-1 - n(r,\ell)$.
\end{enumerate}
\end{corollary}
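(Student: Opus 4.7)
My plan is to follow the proof of Theorem \ref{thm_tropicalPrymBrillNoether} almost verbatim, with the trivial lower-triangle count $\binom{r+1}{2}$ replaced throughout by $n(r,\ell)$ and with Lemma \ref{lem:upperTriangularTableau} providing the refined combinatorial bound. The Prym tableau decomposition $V^r(\Gamma,\pi)=\bigcup_t P_\epsilon(t)$ from the previous section and the cell-dimension formula $\dim P_\epsilon(t)=g-1-m(t)$ from Lemma \ref{lem:maximize} --- where $m(t)$ counts the pair classes $\{i,2g-i\}$ with $i\neq g$ represented in $t$ --- both carry over to folded chains of loops of uniform torsion $k$, since the underlying chip-by-chip analysis is local to each loop and insensitive to the torsion parameter. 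Both parts of the corollary then reduce to the single assertion $\min_t m(t)=n(r,\ell)$ over Prym tableaux of length $r+1$.

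For the upper bound I construct an explicit Prym tableau $t^\star$ with $m(t^\star)=n(r,\ell)$, following Proposition \ref{prop:maxCell}. Take the $k$-uniform lower triangular tableau with $n(r,\ell)$ distinct entries in $\{1,\dots,g-1\}$ supplied by Lemma \ref{lem:upperTriangularTableau} (possible precisely when $g-1\geq n(r,\ell)$), fill every anti-diagonal cell with the symbol $g$, and extend into the upper triangle via the involution $t^\star(a,b):=2g-t^\star(r+2-b,r+2-a)$. The anti-diagonal filling is compatible with the torsion-$2$ condition on $g$ because lattice distances along the anti-diagonal are even, and it is compatible with the parity condition since the offsets $b-a$ on the anti-diagonal all share the parity of $r\equiv\epsilon\bmod 2$. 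The mirror involution preserves the offset $a-b$, so the strict monotonicity, torsion, and Prym conditions all transfer routinely from the lower triangle.

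The main technical obstacle is the lower bound $m(t)\geq n(r,\ell)$, which I plan to deduce by imitating the purity step in the proof of Theorem \ref{thm_tropicalPrymBrillNoether}: given an arbitrary Prym tableau $t$ of length $r+1$, apply the displacement procedure $t\mapsto t_{g-1}$ of Definition \ref{def:upwards_displacement} and symmetrize (place $g$ on the anti-diagonal, mirror above) to produce a Prym tableau $s$ with $P_\epsilon(t)\subseteq P_\epsilon(s)$, so that $m(t)\geq m(s)$. The lower triangle of $s$ is by construction a $k$-uniform displacement tableau with entries in $\{1,\ldots,g-1\}$, to which Lemma \ref{lem:upperTriangularTableau} applies and forces $m(s)\geq n(r,\ell)$; chained together, this gives $m(t)\geq n(r,\ell)$. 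Part (i) is then immediate, since $m(t)\leq g-1$ (there are only $g-1$ pair classes $\{i,2g-i\}$ with $i\neq g$) and the lower bound $m(t)\geq n(r,\ell)>g-1$ would be contradictory; hence no Prym tableau of length $r+1$ exists and $V^r(\Gamma,\pi)=\emptyset$. The delicate verification will be that the lower triangle of $s$ genuinely satisfies the $k$-uniformity hypothesis of Lemma \ref{lem:upperTriangularTableau}; this ultimately rests on the key observation that, by the combined torsion and Prym conditions, any two cells carrying the same pair class $\{i,2g-i\}$ with $i\neq g$ have offsets $a-b$ congruent modulo $k$.
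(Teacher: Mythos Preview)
Your construction of the maximal cell $P_\epsilon(t^\star)$ via Lemma \ref{lem:upperTriangularTableau} matches the paper's exactly. The two proofs diverge at the upper bound $m(t)\geq n(r,\ell)$. The paper does \emph{not} imitate the purity step of Theorem \ref{thm_tropicalPrymBrillNoether}; instead it argues directly by introducing a band $S$ of width $k$ about the main diagonal of the full $(r+1)\times(r+1)$ square (the analogue of $S_u$ from Lemma \ref{lem:upperTriangularTableau} taken in the square rather than the triangle). One checks $|S|=2\,n(r,\ell)+\tfrac{k}{2}$, that no symbol other than $g$ can repeat in $S$, and that $g$ occurs at most $\tfrac{k}{2}$ times there; hence at least $2\,n(r,\ell)$ distinct non-$g$ symbols appear in $t$, which immediately gives $m(t)\geq n(r,\ell)$. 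This bypasses the displacement machinery entirely.

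Your displacement-and-symmetrize route is viable, and if carried through it has the pleasant side effect of yielding purity in the $k$-gonal case (which the paper defers to \cite{CLRW_PBN}). But you have slightly mis-located the delicate point. The $k$-uniformity of the lower triangle of $s$ is essentially automatic: every cell added at step $n<g$ of the displacement has offset in the single residue class $S_n\bmod k$, so repeated symbols in $t_{g-1}$ satisfy the torsion condition by construction. The genuine issue is one step earlier --- whether $\lambda_{g-1}$ covers the entire lower triangle at all, so that $s$ is even well-defined as a square tableau. Lemma \ref{lem:monotonicity} handles cells where $t(a,b)\neq g$, but when $t(a,b)=g$ with $(a,b)$ strictly below the anti-diagonal, you only get $(a,b)\in\lambda_g$, not $\lambda_{g-1}$. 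Closing this gap requires tracking the mirror cell $(r+2-b,r+2-a)$ and the cells on the same offset line in the upper triangle to force $(a,b)$ into $\lambda_{g-1}$ via some earlier $S_n$; this works but is more intricate than you indicate, and is exactly what the paper's direct band-counting argument avoids.
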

\begin{proof}
Assume that $g-1\geq n(r,\ell)$.
Let $t$ be the lower triangular tableau constructed in  Lemma \ref{lem:upperTriangularTableau}. We complete it to a maximally Prym tableau, similarly to Lemma \ref{lem:maximize}, by setting  $t(c,d) + t(a,b) = 2g$ whenever $(a,b)+(c,d)=(r+2,r+2)$, and $t(a,b) = g$ along the anti-diagonal. The dimension of the corresponding cell is now $\rho$. It remains to show that no cell has dimension greater than $\rho$.

Indeed, let $t$ be any tableau, and let  $S$ consist of the cells that are at most $\frac{k}{2}-1$ steps directly to the right of the anti-diagonal, or  $\frac{k}{2}$ steps directly below the anti-diagonal. $S$ is similar to the set $S_u$ constructed in Lemma \ref{lem:upperTriangularTableau}, except that it is a subset of a square rather than a triangular tableau. $S$ intersects the anti-diagonal at $\frac{k}{2}$ cells, and its restrictions to  the upper and lower triangular parts of $t$ each consists of $|S_u|$ symbols. Therefore, we have $|S| = 2\cdot |S_u| + \frac{k}{2}$. 

As always, the dimension of the corresponding cell is bounded from above by half the number of symbols other than $g$ in the tableau. The symbol $g$ may appear on or away from the anti-diagonal, but in any case it may repeat at most $\frac{k}{2}$ times. Any other symbol may not repeat in $S$. Therefore, $S$ contains at least  $|S|-\frac{k}{2} = 2\cdot|S_u|$ symbols distinct from $g$. It follows that the co-dimension of the corresponding cell is at most $|S_u| = n(r,\ell)$. 

Finally, if $g-1<n(r,\ell)$, then the argument above shows that there is no Prym tableau of length $r+1$ on the symbols $1,2,\ldots ,2g-1$, and thus the Prym--Brill--Noether locus is empty. 
\end{proof}


\begin{remark}
The dimension of $V^r(\Gamma,\pi)$ for odd gonality is determined in \cite[Theorem A]{CLRW_PBN}. Moreover, via a refined study of the polyhedral structure of $V^r(\Gamma,\pi)$, it is shown that the locus is pure dimensional in any gonality, and connected in co-dimension $1$ whenever its dimension is positive \cite[Theorem C]{CLRW_PBN}. 
\end{remark}




\section{Proof of Theorem  \ref{thm_PrymBrillNoetherwithfixedgonality}}




The following Lemma \ref{lemma_liftingcovers} will allow us to derive properties of  $k$-gonal algebraic Prym curves from $k$-gonal tropical Prym  curves. 

\begin{lemma}\label{lemma_liftingcovers}
Let $k\geq 2$. Suppose that $K$ is a non-Archimedean field whose residue field has characteristic prime to $2$ and $k$. 
Let $\pi:\widetilde\Gamma\to\Gamma$ be a harmonic double cover such that $\Gamma$ is a metric graph of genus $g$, and let $\kappa:\Gamma\to\ T$ be a harmonic $k$-fold cover of a segment  $T$. Then there is an unramified double cover $f:\widetilde{X}\to X$, such that $\trop(f)=\pi$, and $X$ is a $k$-gonal curve of genus $g$. 
\end{lemma}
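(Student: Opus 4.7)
The plan is to lift the composite tropical chain $\widetilde{\Gamma} \xrightarrow{\pi} \Gamma \xrightarrow{\kappa} T$ to an algebraic chain $\widetilde{X} \to X \to \PP^1$ of admissible covers over a non-Archimedean extension of $K$, and then read off the stated properties. The essential input is the surjectivity of tropicalization on the appropriate moduli space of admissible covers, as developed in \cite{CavalieriMarkwigRanganathan_admissiblecovers}.

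First, I would assemble the combinatorial data into a single tropical admissible cover of $T$ of degree $2k$: the harmonic morphism $\kappa \circ \pi\colon \widetilde{\Gamma} \to T$, together with its factorization through $\pi$. Because $\pi$ is unramified of degree $2$ and $\kappa$ has degree $k$, the discrete invariants (local degrees, vertex genera, and ramification profiles over the endpoints of $T$ as well as over the images of any ramification points of $\kappa$) are all determined, and the cover is combinatorially tame, since the residue characteristic of $K$ is prime to $2k$.  This combinatorial type determines a point in the tropical Hurwitz-type moduli space parameterizing chains of admissible covers of $\PP^1$ that factor through an intermediate étale double cover.

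Second, I would invoke \cite{CavalieriMarkwigRanganathan_admissiblecovers} to produce a preimage of this tropical point in the Berkovich analytification of the corresponding algebraic moduli space. The tameness assumption on the residue characteristic is what guarantees that the relevant moduli space of admissible covers is separated and proper over the target moduli space of pointed rational curves, so that tropicalization is surjective on points by the valuative criterion. Passing to a finite extension $K' \supseteq K$ if necessary and restricting to the generic fiber, this yields the desired chain $f\colon \widetilde{X} \to X \to \PP^1_{K'}$ of smooth projective curves whose simultaneous semistable reduction has dual tropical data equal to $\pi$ factored through $\kappa$.

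Third, I would verify the conclusions. The map $X \to \PP^1_{K'}$ is finite of degree $k$, so $X$ is $k$-gonal; the tropical curve $\Gamma$ has total genus $g$, which is preserved under stable reduction, so $X$ has genus $g$; and the double cover $f\colon \widetilde{X} \to X$ has ramification divisor tropicalizing to $R_\pi = 0$, which, combined with tameness of $f$, forces $f$ to be étale on the generic fiber. The main obstacle I anticipate is verifying the surjectivity statement in Step~2 in the form needed here, that is, ensuring that not just the tropical cover $\kappa \circ \pi$ but also its factorization through $\pi$ lifts: this requires working in a moduli space that remembers the intermediate étale double cover, which is built in \cite{CavalieriMarkwigRanganathan_admissiblecovers} and whose irreducibility (for $k \geq 3$) is supplied by \cite{BiggersFried}. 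Apart from this, the argument is a formal application of the admissible-covers dictionary.
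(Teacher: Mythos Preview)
Your approach differs from the paper's, and it contains a gap at exactly the point you flag. The paper does not lift the chain $\widetilde{\Gamma} \to \Gamma \to T$ simultaneously; instead it performs two independent lifts in sequence. First, it promotes $\kappa\colon \Gamma \to T$ to a map of metrized complexes by attaching a copy of $\PP^1$ at every vertex of $\Gamma$ and writing down an explicit rational function on each component realizing the prescribed local degrees; then \cite[Lemma~7.15]{ABBRI} lifts this to a degree-$k$ map $X \to \PP^1$ of smooth curves with $X$ of genus $g$. Second, since $\Gamma$ is weightless, \cite[Lemma~5.9]{JensenLen_thetachars} lifts $\pi$ to an unramified double cover $\widetilde{X} \to X$ over the already-constructed $X$. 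The decoupling is the whole point: the second lift works over \emph{any} curve $X$ with the given tropicalization, so no moduli space of towers of covers is ever needed.

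Your proposal, by contrast, rests on surjectivity of tropicalization for a moduli space parameterizing chains $\widetilde{X} \to X \to \PP^1$ with the first arrow \'etale. You assert that this space ``is built in \cite{CavalieriMarkwigRanganathan_admissiblecovers},'' but that reference treats admissible covers of a single target, not towers with an intermediate \'etale quotient, and the surjectivity statement you need is not established there. The \cite{BiggersFried} citation supplies irreducibility of the chain space, which is logically unrelated to surjectivity of tropicalization, so it does not help here. Your outline could probably be completed --- for instance by recasting the tower as a $\ZZ/2\ZZ$-equivariant degree-$2k$ admissible cover and checking the lifting theory in that setting --- but as written it defers precisely the nontrivial step, whereas the paper's sequential argument is elementary and already complete.
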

\begin{proof}
We begin by promoting $\kappa$ to a map of metrized complexes as follows. Let $\mathcal{X}$ be the metrized complex obtained by attaching a copy of $\PP^1$ at every vertex of $\Gamma$, and $\mathcal{T}$ the metrized complex obtained from $T$ by attaching a rational component at the image of every vertex. Fix a vertex $v$  of $\Gamma$,   let $p$ be its image in $T$, and let $X_v$ and $X_p$ be the corresponding rational components. Let $t,s$ be the tangent directions emanating from $p$, and $t_1,\ldots, t_m, s_1,\ldots s_n$ be the corresponding tangent directions at $v$ with dilation factors $a_1,\ldots a_m, b_1,\ldots, b_n$. Note that the sum of the $a_i$'s equals the sum of the $b_j$'s due to harmonicity.

Assume that $t$ and $s$ correspond to the points $0,\infty$ of $X_p$. Let $T_1,\ldots, T_m$ be the points of $X_p$ corresponding to $t_1,\ldots, t_m$, and  $S_1,\ldots, S_n$ the points corresponding to $s_1,\ldots, s_n$.  
Let $f$ be  the rational function with $a_i$ zeroes at each $T_i$ and $b_j$ poles at $S_j$. Then $f$ induces a cover  $X_v\to X_p$ with  ramification data given by $\kappa$. Repeating this construction for each vertex, we obtain a map of metrized complexes that specializes to $\kappa$. By construction, the genus of $\mathcal{X}$ equals the genus of $\Gamma$. 
By the assumption on the characteristic,  $\kappa$ is tame. 
 By \cite[Lemma 7.15]{ABBRI}, there is a map $X\to\PP^1$ of smooth curves tropicalizing to $\kappa$, where the genus of $X$ equals the genus of $\Gamma$.

Finally, since  $\Gamma$ is weightless, \cite[Lemma 5.9]{JensenLen_thetachars} implies that  the map $\pi$ may be lifted to an unramified double cover $\widetilde{X}\to X$. 
\end{proof}

\begin{proof}[Proof of Theorem \ref{thm_PrymBrillNoetherwithfixedgonality}]
Let $r\geq -1$ and $k$ be either even or $\geq 2r$. 
Our task is to produce an 
unramified double cover $\pi\colon \widetilde{X}\rightarrow X$ of a smooth projective curve of genus $g$ that is generic in an open subset of the $k$-gonal locus of $\calR_g$  such that the inequality
\begin{equation*}
\dim V^r(X,\pi)\leq g-1 -n(r,\ell) \ .
\end{equation*}
holds.
We choose $\pi\colon \widetilde{X}\rightarrow X$ to be a one-parameter-smoothing over a non-Archimedean field $K$ of the unramifed double cover between two chains of loops $\pi^{trop}\colon\widetilde{\Gamma}\rightarrow \Gamma$ as discussed in Section \ref{section_specialchainofloops} above. Finding such a double cover is always possible by Lemma \ref{lemma_liftingcovers}. 

By Theorem \ref{thm_skeletonofPrym=Prymofskeleton} we can use Baker's specialization inequality \cite[Corollary 2.11]{Baker_specialization} and obtain:
\begin{equation*}
\Trop\big(V^r(X,\pi)\big) \subseteq V^r(\Gamma_X,\pi^{trop}) \ .
\end{equation*} 
Since both $\Gamma$ and $\Gamma'$ are trivalent and without vertex-weights, both of their Jacobians (and therefore also the Prym-variety $\Pr(X,\pi)$) are maximally degenerate. Therefore we may apply Gubler's Bieri-Groves Theorem for maximally degenerate abelian varieties  \cite[Theorem 6.9]{Gubler_trop&nonArch} and Theorem \ref{cor_tropPBNwithgonality} above to find:
\begin{equation*}
\dim V^r(X,\pi)=\dim \Trop\big(V^r(X,\pi)\big)\leq \dim V^r(\Gamma_X,\pi^{trop}\big)=g-1-n(r,\ell) \ .
\end{equation*}
If $g-1<\binom{r+1}{2}$, the tropical Prym--Brill--Noether locus $V^r(\Gamma, \pi^{trop})$ is empty and so also the algebraic Prym--Brill--Noether locus  $V^{r}(X,\pi)$ is empty. 
\end{proof}

\bibliographystyle{amsalpha}
\bibliography{biblio}{}

\end{document}